\newtheorem{defn}{Definition}[section]
\newtheorem{thm}[defn]{Theorem}
\newtheorem{prop}[defn]{Proposition}
\newtheorem{lem}[defn]{Lemma}
\newtheorem{cor}[defn]{Corollary}
\newtheorem{rem}[defn]{Remark}
\numberwithin{equation}{section}
\newcommand{\me}{\mathcal{E}}
\newcommand{\mP}{{\mathcal{P}}}
\newcommand{\mf}{\mathcal{F}}
\newcommand{\ee}{\varepsilon}
\newcommand{\Dee}{\mathcal{D}_\ee}
\newcommand{\R}{\mathbb{R}}
\newcommand{\Rd}{{\mathbb{R}^{d}}}
\newcommand{\Rn}{{\mathbb{R}^{n}}}
\newcommand{\T}{{\mathbb{T}}}
\newcommand{\Td}{{\mathbb{T}^{d}}}
\newcommand{\diff}{\mathop{}\!\mathrm{d}}
\newcommand{\dx}{\diff x}
\newcommand{\dy}{\diff y}
\newcommand{\dz}{\diff z}
\newcommand{\dt}{\diff t}
\newcommand{\ds}{\diff s}
\newcommand{\p}{\partial}
\newcommand{\omegat}{\widetilde{\omega}_{\eps}}
\newcommand{\f}{\frac}
\newcommand{\eps}{\varepsilon}
\newcommand{\dive}{\mbox{div}}
\DeclarePairedDelimiter{\norm}{\lVert}{\rVert}
\def\XXint#1#2#3{{\setbox0=\hbox{$#1{#2#3}{\int}$}
  \vcenter{\hbox{$#2#3$}}\kern-.5\wd0}}
\definecolor{cadmiumgreen}{rgb}{0.0, 0.42, 0.24}
\title[Deterministic particle approximation for a fourth order equation]{A deterministic particle approximation for a fourth-order equation}
\author[C. Elbar, A. Fernández-Jiménez]{Charles Elbar, Alejandro Fernández-Jiménez}
\address{Université Claude Bernard Lyon 1, ICJ UMR5208, CNRS, Ecole Centrale de Lyon, INSA Lyon, Université Jean Monnet, 69622 Villeurbanne, France.}
\email{elbar@math.univ-lyon1.fr}
\email{alejandro.fernandez-jimenez@univ-lyon1.fr}
\keywords{Cahn-Hilliard, deterministic particle approximation, Wasserstein gradient flows, nonlocal Cahn-Hilliard equations, aggregation-diffusion}
\subjclass[2020]{35A01, 
35A15, 
35G20, 
35B36, 
49Q22  
}
\begin{document}

\maketitle

\begin{abstract}
    We provide a deterministic particle approximation to a fourth order equation with applications in cell-cell adhesion. In order to do that, first we show that the equation can be asymptotically obtained as a limit from a class of well-posed nonlocal partial differential equations. These latter have the advantage that the particles’ empirical measure naturally satisfies the equation.
    Afterwards, we obtain stability of the $2$-Wasserstein gradient flow of this family of nonlocal equations that we use in order to recover a deterministic particle approximation of the fourth order equation. Up to our knowledge, in this manuscript we derive the first deterministic particle approximation for a fourth-order partial differential equation. 
    Finally, we give some numerical simulations of the model at the particles level.
\end{abstract}

\section{Introduction}

In this manuscript we are interested in the deterministic particle approximation (DPA) of the fourth-order equation with backward diffusion
\begin{equation}\label{eq:local CH}
    \p_{t}\rho + \dive(\rho\nabla\Delta\rho) +\Delta \rho^m=0    
\end{equation}
on the $d$--dimensional torus $\T^d=\R^d\setminus \mathbb{Z}^d$ with $|\T^d|=1$. Here $m > 1$ although for the convergence results, Theorems~\ref{thm:Nonlocal to local} and~\ref{thm:Particle approximation} , we only consider the case $m=2$. 
In \cite{Carrillo_Esposito_Falco_FJ23}, Carrillo, Esposito, Falc\'o and the second author understand the problem as a $2$-Wasserstein gradient flow.
Using a JKO scheme the authors show existence of solutions for some values of $m$. More recently, in \cite{Buttenschon_Sinclair_Edelstein-Keshet24} the authors study the stability for $m = 2$ in dimension $d=1$. Moreover, there exists further literature focused on the mathematical analysis of some related equations, mostly of Cahn-Hilliard and thin-film type, c.f.~\cite{Elliott_Garcke96, Witel_Bern_Bert_EJAM04, Slepcev09, Lisini_Matthes_Savare12,Liu_Wang17, Parsch25}, with possible applications in lubrication theory, cf.~\cite{HOCHERMAN_ROSENAU_93,Bertozzi_Pugh_Nonlinearity_94,Otto_CPDE98,Ber98,DalPasso_Giacomelli_Shishkov_CPDE01,Grun_CPDE04} and the references therein. 

In addition, this family of equations also models tissue growth and patterning due to cell-cell adhesion \cite{Armstrong71, Duguay_Foty_Steinberg03}. In particular, in \cite{Falco_Baker_Carrillo22}, Falc\'o, Baker and Carrillo show that a $2$ species system version of \eqref{eq:local CH} keeps cell-sorting phenomena, see also \cite{Falco_Baker_Carrillo25} for further details.  However, there exists further literature where the authors suggest that nonlocal equations can be used to describe the cell-cell adhesion phenomenon \cite{Armstrong_Painter_Sherratt06, Carrillo_Murakawa_Sato_Togashi_Trush19}. Therefore, one of the goals of this manuscript is to rigorously bridge the two approaches by obtaining the local equation as an asymptotic of a nonlocal equation. This shows  from a mathematical point of view that we can use both the local and nonlocal model in order to understand this phenomenon. Furthermore, as a consequence of our analysis we  are able to go even deeper and provide a microscopic description of the phenomenon. In particular,  in this manuscript we present a deterministic particle approximation, i.e. a DPA. 


It is not uncommon in nature to observe biological models with some components that might be stochastic. This happens for example for small copy numbers or rare events. On the other hand, we can also observe  different types of situations  such as cell-cell adhesion, large copy numbers or fast reactions where nature behaves deterministically. In particular, these phenomena can be understood via a DPA, e.g. \cite{Falco_Baker_Carrillo22, Volkening_Abbott_Chandra_Dubois_Lim_Sexton_Sandstede20, Bailo_Carrillo_Gomez-Castro24}. Therefore, it is also relevant to understand this type of models.




Before stating our DPA for \eqref{eq:local CH}, let us explain how to obtain it starting from the more general aggregation equation (which is a nonlocal equation)
\begin{equation}\label{eq:aggregation_equation}
    \p_t \rho - \dive(\rho\nabla W_{\eps}\ast\rho)=0,    
\end{equation}
where $W_{\eps}$ is an interaction kernel depending on a small parameter $\eps$ accounting for the range of interaction between the particles or cells. 
More precisely, this formulation hinges on deterministic approaches since \textit{particles are solutions}, i.e. the empirical measure
\begin{equation*}
    \rho^N (t) \coloneqq \f{1}{N}\sum_{i=1}^{N}\delta_{X_{i}(t)}
\end{equation*}
is a weak solution of \eqref{eq:aggregation_equation}, where $X_i (t)$ gives the position of the particles. For any $i=1,\ldots,N$, the positions $X_{i}$ solve the system of ODEs
\begin{equation*}
    \dot{X_i} (t)= -\f{1}{N}\sum_{j=1}^{N}\nabla W_{\eps}(X_i (t) -X_j (t)) = -\nabla W_{\eps}\ast\rho^N(t).
\end{equation*}
Moreover, if $W_\ee \rightarrow \delta_0$ as $\ee \rightarrow 0$ we recover the  porous medium equation
\begin{equation*}
    \partial_{t}\rho - \frac{1}{2}\Delta\rho ^2=0
\end{equation*}
as a limit of \eqref{eq:aggregation_equation} when $\ee \rightarrow 0$. 
Choosing appropriately the interaction kernel as a \textit{squared kernel}, that is $W_\ee = \omega_\ee \ast \omega_\ee$ for some kernel $\omega_\ee$, one can make this argument rigorous. This reasoning provides a microscopic description of the porous medium equation: a DPA. The result we explain here is due to Lions and Mas-Gallic in \cite{Lions_Mas-Gallic01} who studied the problem for the first time. Recently, there has been several further results in this direction for more general non-linear diffusion evolution in time equations \cite{BurgerEsposito23, Carrillo_Esposito_Wu23, doumic_Hecht_Perthame_Peurichard24, Craig_Jacobs_Turanova24,  Carrillo_Esposito_Skrzeczkowski_Wu24, Amassad_Zhou25, Carrillo_Elbar_Fronzoni_Skrzeczkowski25, Difrancesco_Iorio_Schmidtchen25,2025arXiv251203185P} and for certain cases of aggregation-diffusion equations \cite{Carrillo_Craig_Patacchini19, Daneri_Radici_Runa22}. 

With respect to fourth-order differential equations, a first nonlocal Cahn-Hilliard equation was obtained by Giacomin and Lebowitz \cite{Giacomin_Lebowitz97, Giacomin_Lebowitz98} looking at a microscopic description. Furthermore, in \cite{Bertini_Landim_Olla97} Bertini, Landim and Olla derived a constant mobility Cahn-Hilliard model as a hydrodynamic limit from a stochastic Ginzburg-Landau model. Since then, there has also been several results regarding nonlocal to local convergence for various fourth-order models. The case of Cahn-Hilliard type equations with constant mobility is covered in \cite{Melchionna_Ranetbauer_Scarpa_Trussardi18, Davoli_Ranetbauer_Scarpa_Trussardi20, Davoli_Scarpa_Trussardi21a, Davoli_Scarpa_Trussardi21b, Abels_Hurm23}. Degenerate mobility with aggregation given by a fixed potential is studied in \cite{Elbar_Skrzeczkowski23, Elbar_Skrzeczkowski24, Elbar_Gwiazda_Skrzeczkowski_Swierczewska25}. This result is then extended to its corresponding cross-diffusion system in \cite{Carrillo_Elbar_Skrzeczkowski23}. Finally in \cite{Davoli_Marino_Pietschmann24} the authors cover nonlocal to local convergence for a Cahn-Hilliard type cross-diffusion system with non-linear mobility. However, all these nonlocal equations are not stermming from a deterministic particle system as in~\eqref{eq:aggregation_equation}. Therefore, eventhough the nonlocal to local convergence problem for Cahn-Hilliard is well-studied, up to our knowledge, there are no results in the literature providing a deterministic particles approximation for any fourth-order partial differential equation. In order to derive this DPA let us make the following observation. From a Taylor expansion we have that for symmetric $\omega_\ee$ which has only second moments on the diagonal and is compactly supported on the ball with radius $\ee$:
\begin{equation}\label{def:Bee}
    B_\ee[\rho] \coloneqq \f{\rho-\rho\ast\omega_{\eps}}{\eps^{2}} = - \Delta \rho + O(\eps)
\end{equation}
for $\omega_\ee$ symmetric and an aproximation of $\delta_0$ as $\ee \rightarrow 0$. Hence, we can approximate \eqref{eq:local CH} by
\begin{equation*}
    \p_t\rho -\dive\left(\rho\nabla\left(\f{\rho-\rho\ast\omega_{\eps}}{\eps^2}\right)\right) +\Delta\rho^m=0
\end{equation*}
as it is suggested in the different nonlocal Cahn-Hilliard models in the litterature. Nevertheless, this approximation contains the terms $\Delta \rho^2$ and $\Delta \rho^m$ which diffuses particle initial data to a continuous density, i.e., the empirical measure does not remain as a sum of Dirac deltas due to the smoothing effect of the porous medium equation. Therefore this nonlocal equation is not at the origin of a DPA as the empirical measure cannot be a solution of the equation. In order, to overcome this difficulty we suggest the following equation
\begin{equation}\label{eq:nonlocal introduction}
    \p_{t}\rho - \dive\left(\rho\nabla\left( \f{\rho\ast\widetilde{\omega}_{\eps}\ast\widetilde{\omega}_{\eps}-\rho \ast\omega_{\eps} \ast\widetilde{\omega}_{\eps}\ast\widetilde{\omega}_{\eps} }{\eps^2}\right)\right) +\f{m}{m-1}\dive(\rho\nabla\widetilde{\omega}_{\eps}\ast(\rho\ast\widetilde{\omega}_{\eps})^{m-1})=\eps^*\dive(\rho\nabla\rho\ast R_{\alpha})
\end{equation}
where $\widetilde{\omega}_\ee=\omega_{\widetilde{\ee}}$ corresponds to another kernel converging to $\delta_0$ at the same time as $\omega_\ee$ but slower. We take, $\ee \ll \widetilde\ee \ll \eps^*$. In particular, in view of our proof, we need 
\begin{equation}\label{def:widetilde ee}
    \ee \coloneqq o( \widetilde\ee^{\frac{d+6}{2}}) \quad \text{and} \quad \widetilde\ee \coloneqq o (\ee^\ast) \quad \text{as $\ee\to 0$} .
\end{equation}
Note the presence of a purely artificial term with parameters $\eps^*, \alpha$ and kernel $R_\alpha$. Indeed we need to introduce a viscosity term $\ee^* \Delta \rho^2$ to recover $H^1$ compactness. However, in order to preserve the DPA we take the nonlocal approximation of this viscosity term with a parameter $\alpha$. 
Therefore our system has 4 parameters:
\begin{itemize}
\item $\varepsilon$: the main parameter, controlling the nonlocality of the fourth-order term.
\item $\widetilde{\varepsilon}$: a parameter introduced to obtain a DPA; it must converge slower than $\varepsilon$.
\item $\varepsilon^*$: an artificial parameter added to obtain the necessary $H^1$ bounds when $\alpha = 0$; this term must vanish in the limit.
\item $\alpha$: a parameter introduced to obtain a DPA and to guarantee $H^1$ bounds on the solution, which are recovered as $\alpha \to 0$. Later, for $m=2$ we show that we can also take $\alpha = \alpha(\ee)$ converging to $0$ at the same time as $\ee$, see \Cref{sec:Convergence W2}.
\end{itemize}
We keep these notations, rather than already choosing the last three parameters in terms of $\ee$ as the computations and the role of each parameter will become clearer.
Formulation~\eqref{eq:nonlocal introduction} has the advantage that an empirical measure is a weak solution where 
the equation of motion for the particles is given by
\begin{align*}
    \dot{X}_{i}(t) & = - \f{1}{N}\sum_{j=1}^{N}\nabla W_\ee (X_i (t) - X_j (t)) + \frac{m}{m-1}\sum_{j=1}^{N} \nabla \widetilde{\omega}_{\eps}(X_i(t) - X_j(t)) \left(\f{1}{N}\sum_{k=1}^{N}\widetilde{\omega}_{\eps}(X_j (t) - X_k (t))\right)^{m-1} \\
    & \quad - \eps^*\f{1}{N} \sum_{j=1}^{N} \nabla R_{\alpha}(X_i(t)-X_j(t)) 
\end{align*}
where 
$$
    W_\ee = \f{\widetilde{\omega}_{\eps}\ast\widetilde{\omega}_{\eps} - \omega_\eps\ast\widetilde{\omega}_{\eps}\ast\widetilde{\omega}_{\eps}}{\eps^2}.
$$
At this point we realise that both \eqref{eq:local CH} and \eqref{eq:nonlocal introduction} have a gradient flow structure with respect to the $2$-Wasserstein distance $\mathcal{W}_2$ \cite{Villani09, Santambrogio15, Santambrogio17}. In particular, for \eqref{eq:local CH} we consider
the (extended) energy functional 
\begin{equation}\label{eq:Free energy} 
        \mf[\rho] \coloneqq \begin{cases}\frac{1}{2}\displaystyle\int_\Td|\nabla\rho(x)|^2\,\dx-\me_m[\rho], &\rho\in L^{m}(\Td),\, \nabla\rho \in L^2(\Td),\\
        +\infty, & \mbox{otherwise}.
    \end{cases}
\end{equation}
For \eqref{eq:nonlocal introduction}  we take into account the regularised functional
\begin{equation*}
    \mathcal{F}_{\ee,\alpha} [\rho]  \coloneqq \frac{1}{4} \mathcal{D}_\ee [\rho \ast \widetilde\omega_\ee ] - \me_m [\rho \ast \widetilde\omega_\ee] +\frac{\eps^*}{2}\me_2[\rho\ast R^{\f 12}_{\alpha}]
\end{equation*}
where $R_\alpha = R^{\f 12}_{\alpha}\ast R^{\f 12}_{\alpha}$ and
\begin{equation}\label{eq:def_diss}
    \mathcal{D}_\ee [\rho ] = \iint_{\Td\times\Td} \frac{\omega_{\ee} (y)}{\ee^2} | \rho(x) - \rho(x-y) |^2 \dx \dy \quad \text{and} \quad \me_m [\rho] = \frac{1}{m-1} \int_\Td \rho^m (x) \dx.
\end{equation}
%
In particular, the energy decreases in time along the solution of the associated gradient flow. Thanks to the gradient flow structure,  we show stability of the gradient flows in $\ee \rightarrow 0$. There are several stability results of this nature in the literature covering various cases of non-linear diffusion \cite{Lions_Mas-Gallic01, Carrillo_Esposito_Wu23, Carrillo_Esposito_Skrzeczkowski_Wu24}. There are also results in this direction for aggregation-diffusion equations, such as \cite{Carrillo_Craig_Patacchini19, Craig_Elamvazhuthi_Haberland_Turanova23} where the authors also introduce a deterministic particle approximation through a blob method.
From this approach and taking advantage of the $\lambda$-convexity of the regularised free-energy functional we are able to achieve a rigorous particle approximation that follows from the $\lambda$-stability (or contractivity) of Wasserstein gradient flows \cite{Ambrosio_Gigli_Savare08}. However, one can only achieve a qualitative result since the initial datum needs to be approximated enough \cite[Theorem 1.4]{Craig_Elamvazhuthi_Haberland_Turanova23}. Thereby, quantitative results are left as an open problem.

Further related to particle methods, we refer the reader to Chertock's comprehensive review on deterministic particle methods \cite{Chertock17}. We also mention the seminal paper by Oelsch\"ager \cite{Oelschlager90}, where a stochastic particle approximation is proven for classical and positive solutions of the quadratic porous medium equation in $\Rd$ and for weak solutions in the one dimensional case. We also mention the much more recent results in \cite{Chen_Daus_Jungel19, Chen_Daus_Holzinger_Jungel21} covering systems. In \cite{Philipowski07}, the author derive strong $L^1$-solutions of the quadratic porous medium equation from a stochastic mean field interacting particle system with the addition of a vanishing Brownian motion. Finally, there has been further research for the viscous porous medium in \cite{Oelschlager01, Morale_Capasso_Oelschlager05, Figalli_Philipowski08} and for the  porous medium equation with fractional diffusion in \cite{Chen_Holzinger_Jungel_Zamponi22}, where the authors study these equations as a limit of a sequence of distributions of the solutions to nonlinear stochastic differential equations.

\subsection{Main results}
We solve three different research questions. The well-posedness of the nonlocal equation \eqref{eq:nonlocal introduction}, and when $m=2$: the nonlocal to local convergence as $\alpha,\ee\to 0$, and the stability of gradient flow in the limit. First, we introduce our mollifying kernel.

\begin{defn}
    [Mollifying kernel]
    \label{def:Mollifying sequence}
    We say that $\omega_1:\R^d\to \R$ is an admissible mollifier whenever
    \begin{enumerate}[label=$\roman*$.]
        \item $\omega_1$ is smooth, in $W^{2,\infty}(\R^d)$, compactly supported in the unit ball and nonnegative.
        \item It is symmetric, i.e. $\omega_1 (x) = \omega_1 (-x)$.
        \item $\int_{\Rd} \omega_1 (y) \dy = 1$, $\int_{\Rd} y \omega_1 (y) \, dy = 0$ and $\int_{\Rd} y_i y_j \omega_1 (y) \, dy = \delta_{i,j} \frac{2}{d}$.
    \end{enumerate}
\end{defn}

Hence, from $\omega_1$ one can generate a mollifying sequence. This one is defined by $\omega_\ee (x) = \ee^{-d} \omega_1 (x / \ee)$ for $\ee > 0$ and 
In particular, for $\widetilde\omega_\ee$ we make the choice $\widetilde\omega_\ee = \omega_{\widetilde\ee}$ with $\widetilde\ee$ defined in~\eqref{def:widetilde ee}. For the artificial term $\ee^*\dive(\rho \nabla R_\alpha\ast\rho)$ we need another set of assumption from~\cite{Amassad_Zhou25}. This allows us to find a rate of convergence with respect to $\alpha$. We refer to $R$ as the vanishing viscosity kernel.

\begin{defn}
    [Vanishing viscosity kernel]
    \label{def:vanishing_sequence}
    We say that $R$ is an admissible kernel whenever:
    \begin{enumerate}[label=$\roman*$.]
    \item $R$ is nonnegative, in $W^{2,\infty}(\mathbb{R}^d)$ and has bounded second moments.
    \item $\displaystyle \int_{\mathbb{R}^d} R(x)\, dx = 1.$
    \item The Fourier transform $\widehat{R}$ is positive and satisfies
\begin{equation*}
\left\{
\begin{aligned}
    &\frac{1}{a} \le \widehat{R}(\xi) \le 1, && |\xi|\le 1, \\
    &\frac{a}{|\xi|^{2k}} \le \widehat{R}(\xi) \le \frac{b}{|\xi|^{k}}, && |\xi|>1,
\end{aligned}
\right.
\end{equation*}
\textit{for some } $k>0$.

\item The decomposition $R = R^{\frac 12} * R^{\frac 12}$, defined by $\widehat{R^{\frac 12}} \coloneqq (\widehat{R})^{1/2}$, satisfies
\begin{equation*}
|\nabla R^{\frac 1 2}(x)| \le (R^{\frac 1 2} * h)(x), \quad \text{a.e. } x\in \mathbb{R}^d
\end{equation*}
for some $h\in \mathcal{M}(\mathbb{R}^d)$, i.e. Borel measure with bounded total variation.
\item For any $0<\eta<\alpha$, define $R_{\alpha} = \alpha^{-d} R(\cdot/\alpha)$, 
$R_{\eta} = \eta^{-d} R(\cdot/\eta)$ and consider the intermediate scale $L_{\alpha,\eta}$ defined as
\begin{equation}
\label{eq:1.8}
R_{\alpha} = R_{\eta} * L_{\alpha,\eta}, 
\qquad \widehat{L_{\alpha,\eta}} \coloneqq \frac{\widehat{R_{\alpha}}}{\widehat{R_{\eta}}}.
\end{equation}
The kernel $L_{\alpha,\eta}$ so defined belongs to $L^1(\mathbb{R}^d)$ and satisfies for some constant $C>0$,
\begin{equation}
\label{eq:1.9}
\int_{\mathbb{R}^d} |x| |L_{\alpha,\eta}(x)|\, dx \le C\alpha.
\end{equation}

\item $k$ is large enough  so that both $R$ and $R^{\f 12}$ have bounded second derivatives.

\end{enumerate}
\end{defn}

 Moreover, let us specify our definition of Fourier transform as
\begin{equation*}
    \widehat{f} (\xi) \coloneqq \int_\Rd f(x) e^{-2 \pi i x \cdot \xi} \dx .
\end{equation*}
We can reformulate~\eqref{eq:nonlocal introduction} as
\begin{align}\label{eq:Nonlocal CH}\tag{NL}
    \begin{dcases}
        \frac{\partial \rho}{\partial t} + \dive \left( \rho \bold v_{\ee,\alpha} [\rho] \right)=0,\\
        \bold v_{\ee,\alpha} [\rho] = \nabla \left( - B_\ee [\rho \ast \widetilde\omega_\ee \ast \widetilde\omega_\ee] + \frac{m}{m-1} \widetilde\omega_{\ee} \ast (\rho \ast \widetilde\omega_\ee)^{m-1} - \eps^* \rho \ast  R_{\alpha} \right)  , 
     \end{dcases}
\end{align}
in $(0,T) \times \Td$ with initial condition $\rho (0, \cdot ) = \rho_0$ and with the convention $\rho\ast R_{\alpha}=\rho$ when $\alpha=0$ as $R_\alpha\to\delta_0$. The nonlocal problem \eqref{eq:Nonlocal CH} contains three different mollifiers $R_\alpha$, $\widetilde\omega_\ee$ and $\omega_\ee$.  Furthermore, \eqref{eq:Nonlocal CH} corresponds (formally) to the $2$-Wasserstein gradient flow of the free-energy functional
\begin{equation}\label{eq:nonlocal_free_energy}
    \mf_{\ee, \alpha} [\rho] \coloneqq \frac{1}{4} \mathcal{D}_\ee [\rho \ast \widetilde\omega_\ee ] - \me_m [\rho \ast \widetilde\omega_\ee] + \frac{\eps^*}{2} \me_2 [\rho \ast  R^{\f 12}_\alpha],
\end{equation}
with $\Dee$ and $\me_m$ defined in \eqref{eq:def_diss}. 
In particular the free-energy decreases along the solutions of~\eqref{eq:Nonlocal CH}, providing first a priori bounds. Following the same notation we introduce the local version of \eqref{eq:Nonlocal CH}
\begin{align}\tag{CH}\label{eq:CH}
    \begin{dcases}
        \frac{\partial \rho}{\partial t} + \dive \left( \rho \bold v[\rho] \right)=0,\\
        \bold v[\rho] = \nabla \left(\Delta \rho + \frac{m}{m-1} \rho^{m-1} \right) ,
    \end{dcases}
\end{align}
set also on $(0,T) \times \Td$ with initial condition $\rho (0, \cdot ) = \rho_0$. 
Thereby, the first main goal of this manuscript is to show well-posedness for the nonlocal problem \eqref{eq:Nonlocal CH}. In order to do that we start by introducing the notion of weak solution for both~\eqref{eq:Nonlocal CH} and~\eqref{eq:CH}. 

\begin{defn}[Weak solution]\label{def:Weak solution}
    We say that $\rho$ is a weak solution of the  problem \eqref{eq:Nonlocal CH} if
    \begin{itemize}
        \item $\rho \in L^{\infty} (0,T; L^{\infty} (\Td))$,
        \item for every test function $\varphi \in C_c^{\infty} ([0, T ) \times \Td )$ we have
    \begin{align*}
        - \int_0^{T} \int_{\Td} \rho \frac{\partial \varphi}{\partial t} \, \dx \, \dt - \int_{\Td} \rho_0 \varphi (0, x) \, \dx & =   \int_0^{T} \int_{\Td} \rho \bold v_{\ee,\alpha} [\rho ] \cdot \nabla \varphi  \dx  \dt .
    \end{align*}
    \end{itemize}    
\end{defn}

For the local problem we use the following notion of solution.

\begin{defn}[Weak solution]\label{def:Weak solution local}
    We say that $\rho$ is a weak solution of the  problem \eqref{eq:CH} with $m=2$ if
    \begin{itemize}
        \item $\rho \in L^{\infty} (0,T; H^{1}(\Td))\cap L^{2}(0,T; H^{2}(\T^d))$,
        \item for every test function $\varphi \in C_c^{\infty} ([0, T) \times \Td )$ we have
   \begin{align*}
        & - \int_0^T \int_\Td \rho \frac{\partial \varphi}{\partial t} \dx \dt -  \int_\Td \rho_0 \varphi (0, x) \dx = \frac{1}{2} \int_0^T \int_\Td ( \nabla \rho (t, x) \otimes \nabla \rho (t, x) ) : D^2 \varphi (t, x) \dx \dt \\
        & \hspace{20mm} \quad + \frac{1}{2} \int_0^T \int_\Td |\nabla \rho (t, x) |^2 \Delta \varphi \dx \dt + \frac{1}{2} \int_0^T \int_\Td \rho (t, x) \nabla \rho (t, x) \cdot \nabla \Delta \varphi (t, x) \dx \dt \\
        & \hspace{20mm} \quad+ 2\int_0^T \rho(t, x) \nabla\rho (t, x) \cdot \nabla \varphi (t, x) \dx \dt  .
    \end{align*}
    \end{itemize}    
\end{defn}
The solutions constructed in this manuscript can have higher regularity, and an improved weak formulation (i.e. the regularity on the test function can be relaxed). As this is not the main goal of the paper, we keep these notations for simplicity.
Hence, we are ready to introduce the results of this manuscript. 
First we prove an existence result for \eqref{eq:Nonlocal CH}.
\begin{thm}
    [Existence for the nonlocal problem]\label{thm:Nonlocal Existence}
    Assume $\ee , \alpha > 0$, and $\rho_0 \in L^{\infty} (\Td)$ is nonnegative. Assume furthermore $m \in (1, + \infty )$ and the mollifying sequence is as in Definition~\ref{def:Mollifying sequence} and~\ref{def:vanishing_sequence}. Then, the problem \eqref{eq:Nonlocal CH} has a weak solution in the sense of \Cref{def:Weak solution}.
\end{thm}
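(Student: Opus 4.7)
The strategy is to exploit the $2$-Wasserstein gradient-flow structure of \eqref{eq:Nonlocal CH} with respect to $\mf_{\ee,\alpha}$ in \eqref{eq:nonlocal_free_energy} through the De Giorgi minimizing movement (JKO) scheme. Throughout $\ee, \widetilde\ee, \ee^*, \alpha > 0$ are frozen and constants may depend on them. For $\tau>0$, set $\rho_\tau^0 \coloneqq \rho_0$ and iteratively choose
\begin{equation*}
\rho_\tau^{k+1} \in \argmin_{\rho\in\mP(\Td)} \left\{\frac{1}{2\tau}\mw_2^2(\rho,\rho_\tau^k) + \mf_{\ee,\alpha}[\rho]\right\}.
\end{equation*}
Minimizers exist by the direct method: $\Td$ is compact so $\mP(\Td)$ is narrowly compact; each of the three pieces of $\mf_{\ee,\alpha}$ is in fact narrowly continuous because every integrand is a fixed smooth convolution of $\rho$; the lower bound $\mf_{\ee,\alpha}[\rho]\geq -\tfrac{1}{m-1}\|\widetilde\omega_\ee\|_{L^\infty}^{m-1}$ makes the problem coercive.

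The key observation towards the a priori bounds is that $\bold v_{\ee,\alpha}[\rho]$ depends on $\rho$ only through smooth convolutions, so that for every $\rho\in\mP(\Td)$
\begin{equation*}
\|\bold v_{\ee,\alpha}[\rho]\|_{W^{1,\infty}(\Td)} \leq C,
\end{equation*}
uniformly in $\rho$. Combined with the standard Jordan-Kinderlehrer-Otto perturbation, which identifies (up to sign and to leading order in $\tau$) the optimal transport map $T_\tau^{k+1}$ between successive iterates with $\mathrm{id}+\tau\bold v_{\ee,\alpha}[\rho_\tau^{k+1}]$, the change-of-variables formula $\rho_\tau^{k+1}(T_\tau^{k+1}(x))\,|\det DT_\tau^{k+1}(x)|=\rho_\tau^k(x)$ delivers
\begin{equation*}
\|\rho_\tau^{k+1}\|_{L^\infty} \leq (1+C\tau)\|\rho_\tau^k\|_{L^\infty},
\end{equation*}
hence $\|\rho_\tau^k\|_{L^\infty}\leq \|\rho_0\|_{L^\infty}e^{CT}$ uniformly in $\tau$ and in $k$ with $k\tau\leq T$. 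At the same time, telescoping minimality yields the discrete energy inequality
\begin{equation*}
\mf_{\ee,\alpha}[\rho_\tau^K] + \sum_{k=0}^{K-1}\frac{\mw_2^2(\rho_\tau^{k+1},\rho_\tau^k)}{2\tau} \leq \mf_{\ee,\alpha}[\rho_0],
\end{equation*}
from which one extracts uniform $1/2$-Hölder regularity in $\mw_2$ for the piecewise-constant interpolation $t\mapsto\rho_\tau(t)$.

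These estimates suffice to pass to the limit $\tau\to 0$. Ascoli's theorem in $C([0,T];(\mP(\Td),\mw_2))$ together with the $L^\infty$ bound provides a subsequence $\rho_\tau\to\rho$ narrowly for every $t$, with $\rho\in L^\infty((0,T)\times\Td)$; because the mollifiers are fixed and smooth, narrow convergence upgrades to uniform convergence of $\bold v_{\ee,\alpha}[\rho_\tau(t)]\to\bold v_{\ee,\alpha}[\rho(t)]$ on $[0,T]\times\Td$, so one can pass to the limit in the distributional identity
\begin{equation*}
\int_0^T\!\!\int_\Td \rho_\tau\,\p_t\varphi\, \dx\dt + \int_\Td \rho_0\,\varphi(0,\cdot)\,\dx = -\int_0^T\!\!\int_\Td \rho_\tau\,\bold v_{\ee,\alpha}[\rho_\tau]\cdot\nabla\varphi\,\dx\dt + o(1),
\end{equation*}
obtained by standard telescoping from the JKO displacement derivative, and recover the weak formulation of Definition~\ref{def:Weak solution}. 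The main obstacle I anticipate is the rigorous $L^\infty$ propagation at the JKO level: one must know each $\rho_\tau^{k+1}$ is absolutely continuous (true inductively by Brenier's theorem starting from $\rho_0\in L^\infty$) and control the Jacobian of $T_\tau^{k+1}$ through the Monge-Amp\`ere equation, which is justified thanks to the smoothness of $\bold v_{\ee,\alpha}$. If this step is deemed too delicate, the very same a priori bounds can be obtained by first constructing classical solutions of the viscous regularisation $\p_t\rho^\nu+\dive(\rho^\nu\bold v_{\ee,\alpha}[\rho^\nu])=\nu\Delta\rho^\nu$ via a Picard iteration on the flow map (the mollifier-induced Lipschitz dependence of $\bold v_{\ee,\alpha}$ on $\rho$ makes the contraction immediate) and then letting $\nu\to 0$ using the same energy estimates.
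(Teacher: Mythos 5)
Your primary route (JKO/minimizing movements) is genuinely different from the paper's, which instead regularizes with a vanishing viscosity $-\nu\Delta\rho$, proves existence of smooth solutions for fixed $\nu$ via Schauder's fixed-point theorem (\Cref{thm:Viscosity term Existence}), extracts the $L^\infty$ bound from the parabolic maximum principle, and then sends $\nu\to0$. Both routes aim at the same a priori information, but the paper's choice makes the $L^\infty$ control essentially free, which is precisely where your JKO scheme has a real gap.

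Concretely, your $L^\infty$ propagation step is not justified as stated. You write that $\rho_\tau^{k+1}$ is a.c.\ ``inductively by Brenier's theorem.'' Brenier gives the existence of the optimal map $T$ \emph{from} the a.c.\ measure $\rho_\tau^k$; it says nothing about absolute continuity of the image $\rho_\tau^{k+1}$, and a pushforward of an a.c.\ measure can certainly be singular. The energy $\mf_{\ee,\alpha}$ here is a pure interaction functional (every term is a convolution), with \emph{no} internal energy term $\int U(\rho)$ that would penalize concentration, so there is no automatic mechanism forcing the JKO minimizer to be absolutely continuous, let alone bounded. The Monge--Amp\`ere / Jacobian argument you invoke is the right mechanism once you know the minimizer is a.c.\ with a bounded density, but it cannot be used to establish that fact in the first place without circularity. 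This needs a dedicated argument (e.g.\ comparison with a mollified competitor to rule out singular parts, or a barrier argument), and it is exactly the kind of difficulty the paper's viscous route bypasses.

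Your fallback sketch (``Picard iteration on the flow map, the mollifier-induced Lipschitz dependence of $\bold v_{\ee,\alpha}$ makes the contraction immediate'') also has a gap: the velocity contains $\nabla\widetilde\omega_\ee\ast(\rho\ast\widetilde\omega_\ee)^{m-1}$, and for $1<m<2$ the map $s\mapsto s^{m-1}$ is only H\"older, not Lipschitz, near $s=0$. Since the theorem is asserted for all $m\in(1,\infty)$ and one cannot assume $\rho\ast\widetilde\omega_\ee$ stays uniformly away from zero, the contraction is not immediate. This is why the paper uses Schauder's fixed point (compactness, no Lipschitz needed) rather than Banach/Picard, and reserves the Lipschitz/Gr\"onwall machinery for uniqueness, \Cref{thm:Nonlocal Uniqueness}, where the restriction $m\geq2$ is imposed precisely for this reason.
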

We show this result in Subsection \ref{sec:Viscosity limit}. 
Afterwards, we show uniqueness of solutions for \eqref{eq:Nonlocal CH}.
\begin{thm}
    [Uniqueness for the nonlocal problem]\label{thm:Nonlocal Uniqueness}
    Assume $\ee , \alpha > 0$, and $\rho_0 \in L^{\infty} (\Td)$ is nonnegative. Assume furthermore $m \in [2, + \infty )$ and the mollifying sequence is as in Definition~\ref{def:Mollifying sequence} and~\ref{def:vanishing_sequence}. Then, the problem \eqref{eq:Nonlocal CH} has a unique weak solution.
\end{thm}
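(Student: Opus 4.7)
The plan is to establish uniqueness through a $\mathcal{W}_2$ contraction. Given two weak solutions $\rho_1,\rho_2$ with the same initial datum $\rho_0$, both preserving mass by the continuity structure of~\eqref{eq:Nonlocal CH}, I will show that $t\mapsto\mathcal{W}_2^2(\rho_1(t),\rho_2(t))$ satisfies a Gronwall differential inequality with trivial initial value, hence vanishes identically. The strategy is natural here because $\bold v_{\ee,\alpha}[\rho]$ is the gradient of a functional built only from smooth convolutions of $\rho$, which makes both $x$--regularity and $\rho$--stability of the velocity accessible.

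The technical backbone consists of two estimates. First, since $\omega_1$ and $R$ lie in $W^{2,\infty}$ (\Cref{def:Mollifying sequence}, \Cref{def:vanishing_sequence}) and both weak solutions are bounded by $\|\rho_0\|_{L^\infty}$, each term entering $\bold v_{\ee,\alpha}[\rho_i]$ is a convolution of a bounded function with a smooth kernel, so that $\bold v_{\ee,\alpha}[\rho_i](t,\cdot)$ is $L$--Lipschitz on $\Td$ with $L=L(\ee,\widetilde\ee,\alpha,\eps^*,m,\|\rho_0\|_{L^\infty})$ uniform in $t$. Second, the map $\rho\mapsto\bold v_{\ee,\alpha}[\rho]$ is $\mathcal{W}_2$--Lipschitz into $L^\infty$: for each linear contribution $\nabla K\ast\rho$ with $K\in W^{2,\infty}$ built from $\omega_\ee,\widetilde\omega_\ee,R_\alpha$, Kantorovich duality gives
\begin{equation*}
    \|\nabla K\ast(\rho_1-\rho_2)\|_{L^\infty(\Td)} \leq \|\nabla K\|_{\mathrm{Lip}}\,\mathcal{W}_1(\rho_1,\rho_2)\leq \|\nabla K\|_{\mathrm{Lip}}\,\mathcal{W}_2(\rho_1,\rho_2),
\end{equation*}
while for the nonlinear term $\frac{m}{m-1}\nabla\widetilde\omega_\ee\ast(\rho\ast\widetilde\omega_\ee)^{m-1}$ the hypothesis $m\geq 2$ is essential: on the bounded interval $[0,\|\rho_0\|_{L^\infty}]$ the map $s\mapsto s^{m-1}$ is Lipschitz, so that
\begin{equation*}
    \|(\rho_1\ast\widetilde\omega_\ee)^{m-1}-(\rho_2\ast\widetilde\omega_\ee)^{m-1}\|_{L^\infty(\Td)} \leq C\,\|\widetilde\omega_\ee\|_{\mathrm{Lip}}\,\mathcal{W}_2(\rho_1,\rho_2),
\end{equation*}
and a further convolution against $\nabla\widetilde\omega_\ee$ preserves the bound. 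Combining the three contributions yields $\|\bold v_{\ee,\alpha}[\rho_1]-\bold v_{\ee,\alpha}[\rho_2]\|_{L^\infty(\Td)}\leq C\,\mathcal{W}_2(\rho_1(t),\rho_2(t))$.

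To close the argument I would invoke the standard differentiation formula for the squared Wasserstein distance along two solutions of a continuity equation with bounded velocity (e.g.\ Ambrosio--Gigli--Savar\'e, Theorem~8.4.7), which yields, for a.e.\ $t$ and an optimal coupling $\gamma_t$ between $\rho_1(t)$ and $\rho_2(t)$,
\begin{equation*}
    \tfrac{d}{dt}\tfrac{1}{2}\mathcal{W}_2^2(\rho_1,\rho_2) =\int_{\Td\times\Td} \langle \bold v_{\ee,\alpha}[\rho_1](x_1)-\bold v_{\ee,\alpha}[\rho_2](x_2),\,x_1-x_2\rangle \,\mathrm d\gamma_t.
\end{equation*}
Splitting the integrand into $\bold v_{\ee,\alpha}[\rho_1](x_1)-\bold v_{\ee,\alpha}[\rho_1](x_2)$ (controlled by $L\,\mathcal{W}_2^2$ via the spatial Lipschitz bound) and $\bold v_{\ee,\alpha}[\rho_1](x_2)-\bold v_{\ee,\alpha}[\rho_2](x_2)$ (controlled by $C\,\mathcal{W}_2^2$ via the stability estimate together with $\mathcal{W}_1\leq\mathcal{W}_2$), Gronwall's lemma gives $\mathcal{W}_2(\rho_1(t),\rho_2(t))\equiv 0$ and hence $\rho_1=\rho_2$. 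The main technical obstacle I anticipate is the justification of the chain rule at the regularity level provided by \Cref{def:Weak solution}: one must verify that the weak velocity field $\bold v_{\ee,\alpha}[\rho_i]$ is the minimal velocity of $\rho_i$ viewed as an absolutely continuous curve in $(\mathcal{P}(\Td),\mathcal{W}_2)$. This is standard under the $L^\infty$ bounds on both $\rho_i$ and $\bold v_{\ee,\alpha}[\rho_i]$, but has to be written down carefully; together with the Lipschitz character of $s\mapsto s^{m-1}$, it is precisely what restricts the uniqueness statement to $m\geq 2$.
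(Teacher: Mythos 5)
Your argument is correct, but it takes a genuinely different route from the paper. The paper proves uniqueness via an $H^{-1}$ Gr\"onwall argument: it sets $\rho=\rho_2-\rho_1$, defines $\varphi$ via $-\Delta\varphi=\rho$, tests the equation against $\varphi$, and splits the resulting expression into six commutator-type integrals $I_1,\dots,I_6$, each of which is bounded by $C\|\nabla\varphi\|_{L^2}^2$ using the smoothness of $\omega_\ee,\widetilde\omega_\ee,R_\alpha$ together with the $L^\infty$ bound on the solutions and the Lipschitz property of $s\mapsto s^{m-1}$ on bounded sets; Gr\"onwall then yields $\|\rho(t)\|_{H^{-1}}=0$. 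Your $\mathcal{W}_2$-contraction argument, built on Kantorovich--Rubinstein duality for the velocity stability estimate, the spatial Lipschitz bound on $\bold v_{\ee,\alpha}$, and the differentiation formula for $\mathcal{W}_2^2$ along solutions of continuity equations (which is essentially the paper's Lemma~\ref{lem:action-minimising path}), is an equally valid contraction-type argument in a different metric. It is closest in spirit to the paper's Remark~1.9 on uniqueness via $\lambda$-contractivity, but that remark passes through the $\lambda$-convexity of $\mf_{\ee,\alpha}$ which the paper only verifies for $m=2$, whereas your direct Lipschitz stability estimate on $\rho\mapsto\bold v_{\ee,\alpha}[\rho]$ covers the full range $m\ge 2$ and is therefore a genuinely more flexible route to the same conclusion. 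What the paper's $H^{-1}$ method buys in exchange is that it does not require normalising to probability measures (the Wasserstein distance requires equal masses, so your argument needs a harmless rescaling by $\|\rho_0\|_{L^1}$) and it sidesteps the measure-theoretic machinery behind the $\mathcal{W}_2$ chain rule.

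Two small inaccuracies worth correcting, neither of which undermines the argument. First, you assert both solutions are ``bounded by $\|\rho_0\|_{L^\infty}$''; no maximum principle is established for weak solutions of~\eqref{eq:Nonlocal CH} (the $L^\infty$ bound from \Cref{lem:est_nu} grows in time), but the $L^\infty(0,T;L^\infty(\Td))$ bound built into \Cref{def:Weak solution} is all you actually use, so the Lipschitz constant for $s\mapsto s^{m-1}$ should be taken on $[0,\sup_t\|\rho_i\|_{L^\infty}]$. Second, the concern you raise about verifying that $\bold v_{\ee,\alpha}[\rho_i]$ is the \emph{minimal} velocity is a red herring: the differentiation formula for $\mathcal{W}_2^2$ in \Cref{lem:action-minimising path} (following AGS Theorem~8.4.7) is valid for any Borel velocity field $v_i$ satisfying the continuity equation with $\int_0^T\!\int\rho_i|v_i|^2<\infty$, tangent or not, so no such minimality check is needed; the $L^\infty$ bounds on both $\rho_i$ and $\bold v_{\ee,\alpha}[\rho_i]$ already suffice.
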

We prove this result in Subsection \ref{sec:Nonlocal Uniqueness}. Once we have discussed the properties of \eqref{eq:Nonlocal CH} we are ready to prove convergence of solutions from \eqref{eq:Nonlocal CH} to \eqref{eq:CH} for the case $m = 2$. We first define the entropy term
\begin{equation*}
    \Phi [\rho] \coloneqq \int_\Td \rho (\log \rho - 1)  \dx .
\end{equation*}
The dissipation of the entropy yields bounds on the solution. This functional is often used in many contexts, such as the Cahn-Hilliard equation~\cite{Elbar_Skrzeczkowski23,Elliott_Garcke96,MR3448925}. Also, for simplicity we assume that the initial condition is of mass 1, as we intend to use the Wasserstein distance, but the proof can be adapted to any mass.

\begin{prop}
[Convergence of nonlocal to local for each parameter]\label{thm:Nonlocal to local H1}
    Assume $\rho_0\in L^{\infty}(\Td)$ is of mass 1, with bounded free energy and entropy, that is $\sup_{0<\ee,\alpha<1}\mathcal{F}_{\ee, \alpha}(\rho_{0})<+\infty$ and $\Phi(\rho_0)<+\infty$, $m=2$ and the mollifying sequence is as in Definition~\ref{def:Mollifying sequence} and~\ref{def:vanishing_sequence}. Let $\{ \rho_{\ee, \alpha} \}$ be a sequence of solutions of the nonlocal equation \eqref{eq:Nonlocal CH} as constructed in Theorem~\ref{thm:Nonlocal Existence} with initial condition $\rho_0$. Then, there exists a sequence $\alpha_j \rightarrow 0$ as $j \rightarrow \infty$ such that for every $\ee > 0$ fixed
    \begin{equation}
    \begin{split}
    &\rho_{\ee , \alpha_j} \rightharpoonup \rho_{\ee,0} \quad \text{weakly in $L^{1}((0,T)\times \Td)$},\\
    &\rho_{\ee , \alpha_j} \ast  R^{\f 12}_{\alpha_j}  \rightharpoonup \rho_{\ee, 0}\quad  \text{weakly in } L^2(0,T; H^1(\Td)), \\
    & \rho_{\ee , \alpha_j} \ast  R^{\f 12}_{\alpha_j}  \rightarrow \rho_{\ee, 0} \quad \text{strongly in } L^2((0,T)\times \Td), 
    \end{split}
    \end{equation}
    where $\rho_{\ee, 0}$ is a weak solution of \eqref{eq:Nonlocal CH} for $\alpha = 0$ in the sense of \Cref{def:Weak solution}. Furthermore, there exists a further subsequence $\ee_k \rightarrow 0$ as $k \rightarrow \infty$ such that
    \begin{align}\label{eq:Strong H1 NL to CH alpha 0}
       &\rho_{\eps_k,0}\rightharpoonup\rho, \quad \text{weakly in $L^{1}((0,T)\times \Td)$} \\ 
       &\rho_{\ee_k , 0} \ast \omega_{\ee_k} \rightarrow \rho \quad \text{strongly in } L^2(0,T; H^1(\Td))
    \end{align}
    where $\rho$ is a weak solution of the local equation \eqref{eq:CH} with initial condition $\rho_{0}$ in the sense of \Cref{def:Weak solution}.
\end{prop}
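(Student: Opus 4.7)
The strategy is a two-step diagonal limit: first pass $\alpha_j \to 0$ with $\ee > 0$ held fixed, then extract $\ee_k \to 0$ with $\alpha = 0$ and an auxiliary scaling $\ee^* \to 0$. Both steps rest on two uniform dissipation estimates. The Wasserstein gradient-flow identity
\begin{equation*}
    \mf_{\ee,\alpha}[\rho_{\ee,\alpha}(T)] + \int_0^T \int_\Td \rho_{\ee,\alpha} |\mathbf{v}_{\ee,\alpha}[\rho_{\ee,\alpha}]|^2 \dx \dt = \mf_{\ee,\alpha}[\rho_0]
\end{equation*}
bounds the nonlocal Dirichlet energy $\tfrac{1}{4}\Dee[\rho_{\ee,\alpha}\ast\widetilde\omega_\ee]$ and the viscosity piece $\tfrac{\ee^*}{2}\me_2[\rho_{\ee,\alpha}\ast R_\alpha^{1/2}]$, and also controls the action $\int_0^T\!\int \rho|\mathbf{v}|^2$, yielding $\mw_2$-equicontinuity in time. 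The negative contribution $-\me_2[\rho_{\ee,\alpha}\ast\widetilde\omega_\ee]$ is absorbed by Gagliardo--Nirenberg interpolation against the $\Dee$ term and the conservation of mass. The second, independent estimate is the entropy dissipation, obtained by testing \eqref{eq:Nonlocal CH} against $\log\rho_{\ee,\alpha}$: a Plancherel computation identifies the principal dissipation as
\begin{equation*}
    \int_{\Td} |\widehat{\nabla\rho_{\ee,\alpha}}|^2 |\widehat{\widetilde\omega_\ee}|^2 \,\frac{1-\widehat{\omega_\ee}}{\ee^2}\, d\xi,
\end{equation*}
which, using $\tfrac{1-\widehat{\omega_\ee}(\xi)}{\ee^2}\gtrsim |\xi|^2$ on $|\xi|\lesssim 1/\ee$, produces an $H^2$-type control on $\rho_{\ee,\alpha}\ast\widetilde\omega_\ee$ modulated by the cutoff $\widehat{\widetilde\omega_\ee}$.

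For the first limit $\alpha_j \to 0$ at fixed $\ee > 0$, Definition~\ref{def:vanishing_sequence} and the viscosity bound yield an $L^2(0,T;H^1(\Td))$ estimate on $\rho_{\ee,\alpha}\ast R_\alpha^{1/2}$ uniform in $\alpha$. Combined with the $\mw_2$-equicontinuity in time and the smoothing by $\widetilde\omega_\ee$, an Aubin--Lions argument in the spirit of \cite{Carrillo_Esposito_Skrzeczkowski_Wu24, Amassad_Zhou25} delivers the three announced convergences. Passage to the limit in the weak formulation is straightforward at fixed $\ee$, since every mollifier is smooth: the product $\rho_{\ee,\alpha}\mathbf{v}_{\ee,\alpha}$ factors as a strongly convergent term times a weakly convergent one, and $R_\alpha^{1/2}\ast\rho_{\ee,\alpha}\to\rho_{\ee,0}$ together with $R_\alpha\ast\rho_{\ee,\alpha}\to\rho_{\ee,0}$ closes the identification. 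The limit $\rho_{\ee,0}$ is a weak solution of \eqref{eq:Nonlocal CH} with $\alpha=0$ in the sense of \Cref{def:Weak solution}.

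For the second limit $\ee_k \to 0$, the same two dissipation estimates are now exploited uniformly in $\ee$. The $\Dee$ bound combined with the Bourgain--Brezis--Mironescu-type characterisation forces $\|\nabla(\rho_{\ee,0}\ast\widetilde\omega_\ee)\|_{L^\infty_t L^2_x} \le C$, while the entropy-dissipation estimate lifts this to $L^2_t H^2_x$ regularity on $\rho_{\ee,0}\ast\widetilde\omega_\ee$. Because $\widetilde\omega_\ee$ converges to $\delta_0$ much slower than $\omega_\ee$ by \eqref{def:widetilde ee}, a further convolution by $\omega_\ee$ does not destroy these bounds, and Aubin--Lions together with $\mw_2$-equicontinuity produces a subsequence along which $\rho_{\ee_k,0}\ast\omega_{\ee_k}\to\rho$ strongly in $L^2(0,T;H^1(\Td))$ and $\rho_{\ee_k,0}\rightharpoonup\rho$ weakly in $L^1((0,T)\times\Td)$.

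The principal obstacle is the passage to the limit in the nonlocal fourth-order term $\dive\bigl(\rho_{\ee,0}\nabla B_\ee[\rho_{\ee,0}\ast\widetilde\omega_\ee\ast\widetilde\omega_\ee]\bigr)$. The Taylor expansion $B_\ee[\sigma] = -\Delta\sigma + O(\ee^2\|D^4\sigma\|_\infty)$ requires control of high derivatives of the doubly mollified density, whose $L^\infty$ norm costs roughly $\widetilde\ee^{-(d+6)/2}$; the scaling $\ee = o(\widetilde\ee^{(d+6)/2})$ in \eqref{def:widetilde ee} is calibrated precisely so that the remainder vanishes, and the principal part reduces to $\nabla\Delta(\rho_{\ee,0}\ast\widetilde\omega_\ee\ast\widetilde\omega_\ee)$. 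In the weak formulation, two integration by parts against the test function redistribute derivatives so that only first/second spatial derivatives act on $\rho_{\ee,0}\ast\widetilde\omega_\ee$, for which strong $L^2_{t,x}$ convergence is available, while the remaining factor $\rho_{\ee_k,0}$ enters only through its weak $L^1$ limit paired with a smooth kernel. For $m=2$ the nonlinear piece $2\,\widetilde\omega_\ee\ast(\rho_{\ee,0}\ast\widetilde\omega_\ee)$ converges strongly to $2\rho$, recovering $\Delta\rho^2$, and the artificial viscosity $\ee^*\rho_{\ee,0}\ast R_\alpha$ vanishes because $\ee^*\to 0$. Assembling these limits in the weak formulation of Definition~\ref{def:Weak solution local} yields the claim.
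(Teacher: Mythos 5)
Your a priori estimates match the paper's strategy: the Wasserstein gradient-flow identity yields the bounds of Proposition~\ref{lem:Gradient flow}, and the entropy dissipation yields those of Proposition~\ref{prop:More a priori bounds} (your Plancherel rewriting of the latter is a clean, equivalent way to see it). The first limit $\alpha_j\to 0$ is also fine and matches the paper's Proposition~\ref{lem:Frechet-Kolmogorov}.

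The gap is in your treatment of the nonlocal fourth-order term. You propose a pointwise Taylor expansion $B_\ee[\sigma]=-\Delta\sigma + O(\ee^2\|D^4\sigma\|_\infty)$ on $\sigma=\rho_{\ee,0}\ast\widetilde\omega_\ee\ast\widetilde\omega_\ee$, estimate the remainder in $L^\infty$, and assert that after two integrations by parts only strong limits are needed. First, the exponent you quote is off: $\|D^4(\rho_{\ee,0}\ast\widetilde\omega_\ee\ast\widetilde\omega_\ee)\|_{L^\infty}\lesssim\|\rho_0\|_{L^1}\widetilde\ee^{-d-4}$, not $\widetilde\ee^{-(d+6)/2}$, so the remainder vanishes when $\ee=o(\widetilde\ee^{(d+4)/2})$ (which \eqref{def:widetilde ee} implies, but for a different reason). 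Second and more seriously, even after replacing $B_\ee$ by $-\Delta$ you are left with $\int_0^T\!\int_\Td \rho_{\ee,0}\,\nabla\Delta\sigma\cdot\nabla\varphi$, a product of the merely weakly-$L^1$ convergent $\rho_{\ee,0}$ and a nonlinear quantity built from $\rho_{\ee,0}$ itself. Your claim that the residual factor $\rho_{\ee,0}$ only enters ``paired with a smooth kernel'' hides the quadratic terms that arise after integration by parts (the pieces $\nabla\rho\otimes\nabla\rho:D^2\varphi$ and $|\nabla\rho|^2\Delta\varphi$ in Definition~\ref{def:Weak solution local}), for which one strong limit is indispensable and the weak $L^1$ limit of $\rho_{\ee,0}$ is useless. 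The paper never Taylor-expands $B_\ee$ pointwise: it keeps the bilinear form $\langle B_\ee[f],g\rangle=\langle S_\ee[f],S_\ee[g]\rangle$, symmetrizes by moving one $\widetilde\omega_\ee$ onto $\rho_{\ee,0}\nabla\varphi$, and then splits $\widetilde\omega_\ee\ast(\rho_{\ee,0}\nabla\varphi)$ into the ``good'' piece $(\widetilde\omega_\ee\ast\rho_{\ee,0})\nabla\varphi$ plus the commutator $I_2$. The good piece is a product of the strongly convergent $\widetilde\omega_\ee\ast\rho_{\ee,0}$ with the smooth $\nabla\varphi$, so the limit closes via the $S_\ee$ product rule. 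The commutator $I_2$ is what actually requires the scaling $\widetilde\ee\ll\ee^*$: the controlling term $I_{21}^{(2)}\lesssim\widetilde\ee\|\nabla\rho_{\ee,0}\|_{L^2}^2\lesssim\widetilde\ee/\ee^*$ uses the viscosity estimate \eqref{eq:H1 bound}, and this is the only place it is used. Your sketch gives no role to this scaling and omits the commutator entirely, so it cannot close the argument.
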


We prove this result in Subsection \ref{sec:NL to L}. This proposition is at the core of the proof of our main theorem, that is the nonlocal to local convergence when the parameters converge to zero at the same time. We are able to show convergence along a subsequence $(\ee_k, \alpha_k)$ where $\ee_k, \alpha_k \rightarrow 0$ simultaneously as $k \rightarrow \infty$. 

\begin{thm}
    [Convergence of the nonlocal equation]\label{thm:Nonlocal to local}
    Assume $\rho_0\in L^{\infty}(\Td)$ is of mass 1 with bounded free energy and entropy, that is $\sup_{0<\ee,\alpha<1}\mathcal{F}_{\ee, \alpha}(\rho_{0})<+\infty$ and $\Phi(\rho_0)<+\infty$, $m=2$ and the mollifying sequence is as in Definition~\ref{def:Mollifying sequence} and~\ref{def:vanishing_sequence}. Let $\{ \rho_{\ee, \alpha} \}$ be a sequence of solutions of the nonlocal equation \eqref{eq:Nonlocal CH} from \Cref{thm:Nonlocal Existence} with initial condition $\rho_0$. Then, there exists a subsequence $\eps_k\rightarrow 0$ and a rate function $r$ which is exponential such that for $\alpha_k=r(\eps_k)$
    $$
    \rho_{\eps_k,\alpha_k}\to \rho \quad \text{narrowly on $(0,T)\times\T^d$}
    $$
    where $\rho$ is a weak solution of the local equation \eqref{eq:CH} with initial condition $\rho_{0}$ in the sense of \Cref{def:Weak solution}.  
\end{thm}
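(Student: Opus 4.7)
\emph{Strategy.} The plan is to produce, by diagonal extraction, a sequence $(\eps_k,\alpha_k)$ along which $\rho_{\eps_k,\alpha_k}$ inherits narrow convergence from the two-step limit already established in Proposition~\ref{thm:Nonlocal to local H1}. Throughout, I metrize the narrow topology on $\mathcal{P}((0,T)\times\Td)$ by the bounded Lipschitz distance $d_{BL}$; this is legitimate because the domain is compact and the $\rho_{\eps,\alpha}$ are uniformly mass-$1$ nonnegative densities, so the weak $L^1$ convergences provided by Proposition~\ref{thm:Nonlocal to local H1} automatically upgrade to narrow convergence, while the triangle inequality can be applied at the level of $d_{BL}$.

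\emph{Quantifying the $\alpha\to 0$ step.} For each fixed $\eps>0$, I compare $\rho_{\eps,\alpha}$ and $\rho_{\eps,0}$: both solve~\eqref{eq:Nonlocal CH} with the same nonlocal transport field except for the viscous term involving $R_\alpha$ versus $\delta_0$. A stability estimate in the spirit of the uniqueness argument of Subsection~\ref{sec:Nonlocal Uniqueness} is obtained by testing the equation for the difference against a weak dual norm: the source term, of size $\eps^{*}\|R_\alpha-\delta_0\|_{\mathcal{M}}$ times a fixed power of $\|\rho\nabla\rho\|$, is $O(\alpha)$ thanks to the bounded second moments of $R_\alpha$ imposed in Definition~\ref{def:vanishing_sequence}, while the linear transport part generates a Gr\"onwall factor $\exp(C(\eps)T)$ in which $C(\eps)$ depends polynomially on $\eps^{-1}$ through the norms of $\widetilde\omega_\eps$, $\nabla\widetilde\omega_\eps$, and the operator $B_\eps$. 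This yields
\begin{equation*}
    d_{BL}(\rho_{\eps,\alpha},\rho_{\eps,0}) \;\leq\; \alpha\,\exp(C(\eps)T),
\end{equation*}
so that the rate function $r(\eps):=\exp(-2C(\eps)T)$, which is the exponential of a polynomial in $\eps^{-1}$, forces
\begin{equation*}
    d_{BL}\bigl(\rho_{\eps,r(\eps)},\rho_{\eps,0}\bigr)\;\leq\;\exp(-C(\eps)T)\;\xrightarrow[\eps\to 0]{}\;0.
\end{equation*}

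\emph{Diagonal extraction and limit.} Let $\eps_k\to 0$ be the subsequence from Proposition~\ref{thm:Nonlocal to local H1} along which $\rho_{\eps_k,0}\to\rho$ narrowly, where $\rho$ is a weak solution of~\eqref{eq:CH} in the sense of Definition~\ref{def:Weak solution local}. Setting $\alpha_k:=r(\eps_k)$, the triangle inequality gives
\begin{equation*}
    d_{BL}(\rho_{\eps_k,\alpha_k},\rho)\;\leq\;d_{BL}(\rho_{\eps_k,\alpha_k},\rho_{\eps_k,0})+d_{BL}(\rho_{\eps_k,0},\rho)\;\xrightarrow[k\to\infty]{}\;0,
\end{equation*}
which is the required narrow convergence; since $\rho$ already solves~\eqref{eq:CH}, no further passage to the limit is needed.

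\emph{Main obstacle.} The heart of the proof is the stability estimate of the second step: one must track how the constants in the a~priori bounds used in the proof of Proposition~\ref{thm:Nonlocal to local H1} depend on $\eps$ (through $\|B_\eps\|$, $\|\widetilde\omega_\eps\|_{W^{2,\infty}}$, and the convolution norms entering the velocity field $\bold v_{\eps,\alpha}$) and then extract a linear-in-$\alpha$ modulus of continuity for $\alpha\mapsto\rho_{\eps,\alpha}$ in a sufficiently weak norm. Once such a quantitative bound is in hand, taking $r$ exponential is essentially forced by the Gr\"onwall-type blow-up and the diagonal extraction is routine; a purely compactness-based diagonal argument would also produce \emph{some} rate function, but would not justify the explicit exponential rate claimed in the statement.
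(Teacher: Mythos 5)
Your overall scaffolding (quantify the $\alpha\to 0$ step, then diagonal extraction plus a triangle inequality in a metric compatible with narrow convergence) matches the paper's strategy, and the final step is essentially the same as the paper's use of the Kantorovich--Rubinstein duality (Lemma~\ref{lem:W_2_Kanto_Rubin}). However, the heart of your argument --- the $H^{-1}$-type stability estimate for $\alpha\mapsto \rho_{\ee,\alpha}$ inspired by Subsection~\ref{sec:Nonlocal Uniqueness} --- has a genuine gap.

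First, the Gr\"onwall constant you would obtain from the $H^{-1}$ estimate is \emph{not} independent of $\alpha$. Running the uniqueness computation for the terms corresponding to $I_5,I_6$ (the $R_\alpha$ part of the velocity) produces factors of $\ee^\ast\|D^2 R_\alpha\|_{L^\infty}\sim \ee^\ast\alpha^{-d-2}$ and $\ee^\ast\|\nabla R_\alpha\|_{L^\infty}\sim\ee^\ast\alpha^{-d-1}$, so the constant in your Gr\"onwall lemma depends on $\alpha^{-1}$ and not just on $\ee^{-1}$. Moreover, the $I_2$, $I_4$, $I_6$-type terms require $\|\rho_{\ee,\alpha}\|_{L^\infty}$, which from Lemma~\ref{lem:est_nu} grows like $\exp(\|\bold v_{\ee,\alpha}\|_{W^{1,\infty}}T)\sim \exp(C\ee^\ast\alpha^{-d-2}T)$ --- an additional $\alpha$-dependence. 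The gain of size $O(\alpha)$ cannot beat an exponential in $\alpha^{-d-2}$, so $\alpha\exp(C(\ee,\alpha)T)$ does not converge to $0$ as $\alpha\to 0$; the argument is circular.

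Second, the claimed bound on the ``source term'' is not correct as stated: $\|R_\alpha-\delta_0\|_{\mathcal{M}}$ is $O(1)$ for probability measures, not $O(\alpha)$. Getting $O(\alpha)$ requires a genuine commutator estimate that pays with a derivative elsewhere; this is precisely the delicate part of the proof.

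The paper avoids both issues by measuring the difference in the $2$-Wasserstein distance rather than $H^{-1}$, extending the commutator estimate of Amassad--Zhou (Proposition~\ref{prop:commutator estimate} and Corollary~\ref{cor:wass_conv}). The decisive structural gains are: (i) the Kantorovich potentials are uniformly bounded in $L^\infty$ on the torus (Lemma~\ref{lem:action-minimising path}), so no $L^\infty$ bound on $\rho_{\ee,\alpha}$ is needed; (ii) the viscous diffusion contribution $\mathcal{D}_{\rho,\rho_\eta}$ is handled by the displacement convexity of $\mathcal{E}_2$ (Lemma~\ref{lem:Convexity}), which makes its leading part nonpositive; (iii) the remaining commutators $C^{(4)},C^{(5)},C^{(6)}$ only involve $\bold w[\rho]$, which does not contain $R_\alpha$, so their size $O(\eta)$ has constants that blow up at most polynomially in $\ee^{-1}$. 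If you want to salvage your route, you would need to import an analogous convexity or monotonicity structure into the $H^{-1}$ framework; a naive Gr\"onwall estimate modelled on the uniqueness proof does not close.
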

We present a sketch of the proof in Subsection~\ref{sec:Sketch proof} and a rigorous proof in \Cref{sec:Convergence W2}. We prove another result, exploiting the $\lambda$-convexity of $\mf_{\ee, \alpha}$ with respect to the $2$-Wasserstein distance in order to obtain a deterministic particle approximation. Nevertheless, before stating the main result we mention on a remark another consequence from the $\lambda$-convexity connected to the uniqueness of the sequence $\rho_{\ee, \alpha}$.
\begin{rem}[Uniqueness through $\lambda$-contractivity]
    We can take advantage of the $2$-Wasserstein $\lambda$-convexity to recover uniqueness of the \eqref{eq:Nonlocal CH} for $m= 2$. As explained in \Cref{sec:Convexity}, the regularised free energy $\mf_{\ee. \alpha}$ is $\lambda$-convex. Thanks to this,  the weak solution $\rho_{\ee, \alpha}$ in \Cref{thm:Nonlocal Existence} is unique among absolutely continuous curves $\rho : [0,T] \rightarrow \mP (\Td)$ satisfying \eqref{eq:Nonlocal CH}.
\end{rem}

We introduce the deterministic particle approximation to the local problem \eqref{eq:CH}. 
\begin{thm}
    [Particle approximation to \eqref{eq:CH}]\label{thm:Particle approximation}
    Assume $\rho_0\in L^{\infty}(\Td)$ is of mass 1 with bounded free energy and entropy, that is $\sup_{0<\ee,\alpha<1}\mathcal{F}_{\ee, \alpha}(\rho_{0})<+\infty$ and $\Phi(\rho_0)<+\infty$, $m=2$ and the mollifying sequence is as in Definition~\ref{def:Mollifying sequence} and~\ref{def:vanishing_sequence}.  For any $t \in [0,T]$, $N \in \mathbb{N}$, the empirical measure $\rho_{\ee , \alpha}^N (t) = \frac{1}{N} \sum_{i=1}^N \delta_{X_i (t)}$ is a weak solution to \eqref{eq:Nonlocal CH} with initial condition $\rho^{N}(0) =\frac{1}{N} \sum_{i=1}^N \delta_{X_{i,0}}$ provided the particles satisfy the ODE system given by  
    \begin{align*}
        \dot{X}_{i}(t)  & = - \f{1}{N}\sum_{j=1}^{N}\nabla W_\ee(X_i(t) - X_j(t))  + 2\f{1}{N}\sum_{j=1}^{N} \nabla \widetilde{\omega}_{\eps}\ast\widetilde{\omega}_\ee(X_i(t)-X_j(t)) \\
        & \quad - \eps^*\frac{1}{N}\sum_{j=1}^{N} \nabla R_{\alpha}(X_i(t)-X_j(t))
    \end{align*}
    and $X_{i}(0) = X_{i,0}$. Here 
    $$
        W_\ee = \f{\widetilde{\omega}_{\eps}\ast\widetilde{\omega}_{\eps} - \omega_\eps\ast\widetilde{\omega}_{\eps}\ast\widetilde{\omega}_{\eps}}{\eps^2}.
    $$
    Let us consider the sequence $\rho_{\ee_k , \alpha_k}$ from \Cref{thm:Nonlocal to local}. 
    Furthermore, suppose that, up to a subsequence, there exists $N = N(k) \rightarrow + \infty$ as $k \rightarrow \infty$ such that 
    \begin{equation*}
        \lim_{k \rightarrow \infty}  e^{\lambda_{\ee_k, \alpha_k} t} \mathcal{W}_2 (\rho_{\ee_k, \alpha_k }^N (0) , \rho(0))  = 0, \quad \, t \in [0,T],
    \end{equation*}
    where
    \begin{equation*}
        \lambda_{\ee, \alpha } \simeq -\left(\ee^{-2} \widetilde\ee^{-d-2} +  \widetilde\ee^{-d-2} + \ee^\ast \alpha^{-d-2}\right).
    \end{equation*}
    Then,
    $$
    \rho_{\eps_k,\alpha_k}^{N} \to \rho \quad \text{narrowly on $(0,T)\times\T^d$}
    $$
where $\rho$ is a weak solution of~\eqref{eq:CH} with initial condition $\rho_{0}$.
\end{thm}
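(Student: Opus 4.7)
\textbf{Step 1: The empirical measure is a weak solution of \eqref{eq:Nonlocal CH}.} I would first verify by direct calculation that $\rho^N_{\ee,\alpha}(t) = \frac{1}{N}\sum_i \delta_{X_i(t)}$ satisfies \Cref{def:Weak solution} precisely when the $X_i$ evolve according to the stated ODE. The key observation is that, thanks to the smoothing by $\tilde\omega_\ee$, $\omega_\ee$ and $R_\alpha$, the vector field $\bold v_{\ee,\alpha}[\rho^N]$ is a smooth function on $\T^d$ even though $\rho^N$ is atomic. Testing \eqref{eq:Nonlocal CH} against $\varphi\in C_c^\infty([0,T)\times\T^d)$ and integrating by parts in time gives
\[
\text{LHS} = \frac{1}{N}\sum_{i=1}^N \int_0^T \dot X_i(t)\cdot\nabla\varphi(t,X_i(t))\,\dt,\qquad \text{RHS} = \frac{1}{N}\sum_{i=1}^N \int_0^T \bold v_{\ee,\alpha}[\rho^N](t,X_i(t))\cdot\nabla\varphi(t,X_i(t))\,\dt,
\]
so equality for all $\varphi$ is equivalent to $\dot X_i(t) = \bold v_{\ee,\alpha}[\rho^N](t,X_i(t))$. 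Expanding
$\rho^N\ast\tilde\omega_\ee\ast\tilde\omega_\ee = \tfrac1N\sum_j (\tilde\omega_\ee\ast\tilde\omega_\ee)(\cdot-X_j)$,
$B_\ee[\rho^N\ast\tilde\omega_\ee\ast\tilde\omega_\ee] = \tfrac1N\sum_j W_\ee(\cdot-X_j)$,
and, for $m=2$, $2\tilde\omega_\ee\ast(\rho^N\ast\tilde\omega_\ee) = \tfrac{2}{N}\sum_j (\tilde\omega_\ee\ast\tilde\omega_\ee)(\cdot-X_j)$, then taking the gradient at $X_i$ reproduces exactly the ODE system of the statement.

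\textbf{Step 2: $\lambda$-contractivity of the gradient flow.} By the Remark preceding the theorem (established in \Cref{sec:Convexity}), the regularised free energy $\mf_{\ee,\alpha}$ is $\lambda_{\ee,\alpha}$-convex along generalised geodesics in $(\mprd,\mw_2)$ with $\lambda_{\ee,\alpha}\simeq -(\ee^{-2}\widetilde\ee^{-d-2}+\widetilde\ee^{-d-2}+\ee^\ast\alpha^{-d-2})$; the three contributions come from the Hessians of $\tfrac14\Dee[\rho\ast\widetilde\omega_\ee]$, $\me_2[\rho\ast\widetilde\omega_\ee]$ and $\tfrac{\ee^\ast}{2}\me_2[\rho\ast R_\alpha^{1/2}]$ respectively. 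Both the atomic curve $\rho^N_{\ee,\alpha}$ and the continuum solution $\rho_{\ee,\alpha}$ constructed in \Cref{thm:Nonlocal Existence} lie in the domain of $\mf_{\ee,\alpha}$ (the mollifications render the functional finite on empirical measures) and solve the associated $\mw_2$-gradient flow. The classical contraction theorem of Ambrosio--Gigli--Savaré~\cite{Ambrosio_Gigli_Savare08} therefore delivers
\begin{equation}\label{eq:contraction plan}
\mw_2\bigl(\rho^N_{\ee,\alpha}(t),\rho_{\ee,\alpha}(t)\bigr) \;\le\; e^{-\lambda_{\ee,\alpha}\,t}\,\mw_2\bigl(\rho^N_{\ee,\alpha}(0),\rho_{\ee,\alpha}(0)\bigr)\qquad \forall\,t\in[0,T].
\end{equation}

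\textbf{Step 3: Triangle inequality and passing to the limit.} Along the subsequence $(\ee_k,\alpha_k)$ produced by \Cref{thm:Nonlocal to local} and for the corresponding $N(k)$, the triangle inequality gives
\[
\mw_2\bigl(\rho^{N}_{\ee_k,\alpha_k}(t),\rho(t)\bigr) \;\le\; \mw_2\bigl(\rho^{N}_{\ee_k,\alpha_k}(t),\rho_{\ee_k,\alpha_k}(t)\bigr) + \mw_2\bigl(\rho_{\ee_k,\alpha_k}(t),\rho(t)\bigr).
\]
By \eqref{eq:contraction plan} together with the hypothesis controlling $e^{\lambda_{\ee_k,\alpha_k}t}\mw_2(\rho^{N}_{\ee_k,\alpha_k}(0),\rho(0))$, the first summand vanishes as $k\to\infty$; the second vanishes by \Cref{thm:Nonlocal to local}, since on the compact torus narrow convergence of probability measures is equivalent to $\mw_2$-convergence. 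Therefore $\rho^{N}_{\ee_k,\alpha_k}(t)\to\rho(t)$ in $\mw_2$ for every $t\in[0,T]$, and integrating (or using dominated convergence with the bounded diameter of $\T^d$) yields narrow convergence of $\rho^{N}_{\ee_k,\alpha_k}$ to $\rho$ on $(0,T)\times\T^d$, which is the desired conclusion.

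\textbf{Main obstacle.} The delicate point is reconciling the blow-up of $|\lambda_{\ee,\alpha}|$ as $(\ee,\alpha)\to 0$ with the requirement that the particle approximation at $t=0$ be fine enough. The statement only posits the existence of a sequence $N(k)$ with the required initial closeness, which keeps the conclusion qualitative — in the same spirit as~\cite[Theorem 1.4]{Craig_Elamvazhuthi_Haberland_Turanova23} — and makes an explicit quantitative rate of convergence $N(k)$ against $(\ee_k,\alpha_k)$ an interesting open problem. A secondary technical hurdle is establishing the explicit form of $\lambda_{\ee,\alpha}$ itself; this amounts to controlling the Hessians of the three pieces of $\mf_{\ee,\alpha}$ in the Otto calculus, which is where the factors $\ee^{-2}\widetilde\ee^{-d-2}$, $\widetilde\ee^{-d-2}$ and $\ee^\ast\alpha^{-d-2}$ arise from the $W^{2,\infty}$-norms of $\omega_\ee$, $\widetilde\omega_\ee\ast\widetilde\omega_\ee$ and $R_\alpha$ respectively.
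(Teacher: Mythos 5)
Your proposal is correct and follows essentially the same route as the paper: verify that the empirical measure solves \eqref{eq:Nonlocal CH} under the stated ODE (which you carry out in more explicit detail than the paper does), invoke the $\lambda$-contractivity of the $\mathcal{W}_2$-gradient flow of $\mf_{\ee,\alpha}$ coming from $\lambda$-geodesic convexity, then close via the triangle inequality and \Cref{thm:Nonlocal to local}. The one cosmetic difference: the paper derives $\lambda_{\ee,\alpha}$ not by separately controlling the Otto--Hessians of the three pieces, but by first observing that the whole functional collapses to a single interaction energy $\mathcal{F}_{\ee,\alpha}[\rho]=\int_{\T^d}\rho\,(W_{\ee,\alpha}\ast\rho)$ with kernel $W_{\ee,\alpha}=\ee^{-2}(\widetilde\omega_\ee\ast\widetilde\omega_\ee-\omega_\ee\ast\widetilde\omega_\ee\ast\widetilde\omega_\ee)-\widetilde\omega_\ee\ast\widetilde\omega_\ee+\ee^*R_\alpha$, bounding $\|(D^2W_{\ee,\alpha})_-\|_{L^\infty}$, and applying \Cref{lem:lambda_convex_interaction} — but the resulting constant is the same. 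Note also that in Step~2 you correctly write the contraction as $e^{-\lambda_{\ee,\alpha}t}$ (consistent with \eqref{eq:gronwall_lambda_convex} and with $\lambda_{\ee,\alpha}<0$), whereas the paper's theorem statement and Section~6 both write $e^{+\lambda_{\ee,\alpha}t}$, which appears to be a sign typo; your Step~3 then quotes the hypothesis verbatim with the paper's (apparently mistyped) sign rather than the corrected one, so it is worth making the two consistent in a final write-up.
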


In view of \Cref{thm:Particle approximation}, it is possible to choose the initial particle discretisation such that $\mathcal{W}_2 (\rho_{\ee_k, \alpha_k }^N (0), \rho_{\ee_k, \alpha_k} (0) ) = O(1/N)$, for a given $\rho_{\ee_k, \alpha_k} (0) = \rho_0 \in \mP^a (\Td)$. We consider $\rho_{\ee_k, \alpha_k}^N (0) = \rho_0^N$ obtained by dividing the area below the graph of $\rho_0$ in $N$ different region with equal masses. Then, one can just take $N = o \left( e^{ 1 / \lambda_{\ee, \alpha}} \right)$ to fulfil the hypothesis on the initial condition. In this manuscript we do not provide sharp rates  and it remains as an open question for future research. 

\subsection{Structure of the paper}
In \Cref{sec:Preliminaries} we set the notation and definitions that we use in this manuscript. Furthermore, we also introduce some key properties of our main mathematical objects. In \Cref{sec:NL} we focus on the nonlocal problem \eqref{eq:Nonlocal CH}. We study existence and uniqueness of weak solutions, proving \Cref{thm:Nonlocal Existence} and \Cref{thm:Nonlocal Uniqueness}. In \Cref{sec:Non-L to L}, we prove convergence of solutions of the nonlocal problem \eqref{eq:Nonlocal CH} to solutions of the local problem \eqref{eq:CH}, we show \Cref{thm:Nonlocal to local H1}. In \Cref{sec:Convergence W2} we show that we can take the limit in $\ee$ and $\alpha$ simultaneously using the $2$-Wasserstein distance and we discuss \Cref{thm:Nonlocal to local}. In \Cref{sec:Convexity} we provide a deterministic particle approximation. In order to do that we exploit the $\lambda$-convexity of the free-energy functional with respect to the $2$-Wasserstein metric, proving  \Cref{thm:Particle approximation}. Finally, in \Cref{sec:Simulations} we provide some numerical simulations of the model at the particles level.

\subsection{Notations and functional settings}

We denote by $L^{p}(\Td)$, $W^{m,p}(\Td)$ the usual Lebesgue and Sobolev spaces, and by $\|\cdot\|_{L^{p}}$, $\|\cdot\|_{W^{m,p}}$ their corresponding norms. As usual, $H^{s}(\Td) = W^{s,2}(\Td)$. When the norms concern a vector, or a matrix, we still write $\|\nabla u\|_{L^{p}}$ or $\|D^{2} u\|_{L^{p}}$ instead of $\|\nabla u\|_{L^{p}(\Td; \Td)}$ and $\|D^2 u\|_{L^{p}(\Td;\R^{d^2})}$. $\text{Lip}(\Td)$ is the space of Lipschitz functions on the torus. We often write $C$ for a generic constant appearing in the different inequalities. Its value can change from one line to another, and its dependence to other constants can be specified by writing $C(a,...)$ if it depends on the parameter $a$ and other parameters. 

\section{Preliminaries}\label{sec:Preliminaries}

In this section, we focus into obtaining a better understanding of the problem. In order to do that we describe the key properties of the nonlocal operators appearing through the manuscript and we analyse the structure of the corresponding  free energy. In Subsection~\ref{sec:Wasserstein metric} we introduce the Wasserstein metric, Subsection \ref{sec:Nonlocal operators} is devoted to nonlocal operators and their properties. Subsection \ref{sec:free-energy} focuses on the free-energy functional. Finally, in Subsection~\ref{sec:Sketch proof} we present a sketch of proof of our main theorems.

\subsection{Optimal transport and Wasserstein metric}\label{sec:Wasserstein metric}

A key tool for the analysis is the Wasserstein metric (see also \cite{Ambrosio_Gigli_Savare08, Villani09, Santambrogio15} for further background and more details on the definitions and remarks on this subsection). This is a distance function in the space of probability measures.

\begin{defn}
    [$2$-Wasserstein distance]
    For $\mu, \nu \in \mP(\Td)$ (the space of probability measures), we define the $2$-Wasserstein distance between $\mu$ and $\nu$ as
    \begin{equation}\label{eq:W2}
        \mathcal{W}_2 (\mu , \nu) \coloneqq \min_{\gamma \in \Gamma (\mu , \nu)} \left\{ \int_{\Td \times \Td} c_2(x,y) \diff \gamma (x, y) \right\}^{\frac{1}{2}},
    \end{equation}
    where the transport cost on the torus is 
    $$
        c_2(x,y) = \inf_{k\in \mathbb{Z}^d}|x-y+k|^2,
    $$
    $\Gamma (\mu , \nu)$ is the set of transport plans between $\mu$ and $\nu$, 
    \begin{equation*}
        \Gamma (\mu , \nu ) = \left\{ \gamma \in \mP (\Td \times \Td) : (\pi_x)_\# \gamma = \mu, (\pi_y)_\# \gamma = \nu \right\},
    \end{equation*}
    and $\pi_x$, $\pi_y$ are the projections onto the first and second variables, respectively.
\end{defn}

In the expressions above, we obtain $\mu$ and $\nu$ as the push-forward of $\gamma$ through the projections $\pi_x$ and $\pi_y$. For a measure $\mu \in \mP (\Td)$ and a Borel map $T : \Td \rightarrow \T^d$, $n \in \mathbb{N}$, the push-forward of $\mu$ through $T$ is defined by
\begin{equation*}
    \int_\Td f(y) \diff T_\# \mu (y) = \int_\Td f(T(x)) \diff \mu (x) \quad \text{for all Borel functions } f \text{ on } \Rn.
\end{equation*}

It is known that when $\mu\ll \mathcal{L}^d$ where $\mathcal{L}^d$ is the Lebesgue measure then the optimal transport plan $\gamma_0$ is induced by a transport map $T:\Td\to \Td$ satisfying $\gamma_0 = (Id, T)\sharp\mu$. Moreover, $T(x) = x-\nabla\varphi(x)$ where $\varphi$ is the so-called Kantorovich potential. In addition to this,  there exists $\psi$ such that $T^{-1}(x) = x-\nabla\psi$ and we have $(\text{id}-\nabla\varphi)\sharp \mu = \nu $ and $(\text{id}-\nabla\psi)\sharp \nu = \mu $.

Finally, we have the following lemma which can be adapted from~\cite[Theorem 8.4.7]{Ambrosio_Gigli_Savare08} and~\cite{Amassad_Zhou25}. 

\begin{lem}\label{lem:action-minimising path}
    Let $\mu_0, \mu_1 \in L^\infty ([0,T] ; \mathcal{P} (\Td))$, absolutely continuous w.r.t. the Lebesgue measure. Assume that they solve the convection equations
    \begin{equation*}
        \partial_t \mu_i (t,x) + \dive (v_i (t,x) \mu_i (t,x) ) = 0, \quad i = 0,1,
    \end{equation*}
    in distribution, with
    \begin{equation*}
        \int_0^T \int_\Td \mu_i (t,x) | v_i (t,x) |^2 \dx \dt < \infty , \quad i = 0,1.
    \end{equation*}
    Then, the following formula holds for $\mathcal{L}^1-a.e$ $t\in(0,T)$:
    \begin{equation*}
        \frac{\diff}{\diff t} \left[ \frac{1}{2} \mathcal{W}_2^2 (\mu_0 (t, \cdot) , \mu_1 (t, \cdot) ) \right] = \int_\Td \nabla\varphi^t(x)  \cdot v_0(t,x) \mu_0(t,x) \diff x + \int_\Td \nabla\psi^t(x)  \cdot v_1(t,x) \mu_1(t,x) \diff x 
    \end{equation*}
    where $(\varphi^t,\psi^t)$ is any pair of Kantorovich potentials in the optimal transport problem of $\mu_0(t)$ onto $\mu_1(t)$. In particular
    \begin{equation*}
       \mathcal{W}_2^2(\mu_0 (t, \cdot) , \mu_1 (t, \cdot) ) = \int_\Td \mu_0|\nabla\varphi^t|^2\diff x.
    \end{equation*}
    Furthermore, there exists some constant $C$ such that
    \begin{equation*}
        \sup_{t\in[0,T]} ( \| \nabla\varphi^t\|_{L^\infty (\Td)} + \| \nabla\varphi^t  \|_{BV(\Td)} ) \leq C. 
    \end{equation*}
\end{lem}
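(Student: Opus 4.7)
The plan splits the statement into two parts: the differentiation formula for $t\mapsto \tfrac12\mathcal{W}_2^2(\mu_0(t),\mu_1(t))$, and the uniform $L^\infty\cap BV$ bounds on the Kantorovich potentials.

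For the first part I would begin by verifying that each curve $\mu_i:[0,T]\to\mathcal{P}_2(\Td)$ is metrically absolutely continuous, with metric derivative controlled by $|\mu_i'|(t)\leq \|v_i(t)\|_{L^2(\mu_i(t))}$. This is the standard consequence of the continuity equation and the assumed $L^2$ integrability $\int_0^T\!\int_\Td \mu_i |v_i|^2\,\dx\,\dt<\infty$. Once this is established, the differentiation identity is exactly the content of \cite[Theorem 8.4.7]{Ambrosio_Gigli_Savare08}, whose proof transfers verbatim to the flat torus: the cost $c_2$ is continuous, bounded, and smooth away from the cut locus, and the cut locus is $\mathcal{L}^d$-negligible, hence $\mu_i$-negligible by absolute continuity. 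The identity $\mathcal{W}_2^2 = \int_\Td \mu_0 |\nabla\varphi^t|^2 \,\dx$ is then the evaluation of the transport cost along the Brenier map $T^t = \mathrm{id} - \nabla\varphi^t$ established in \Cref{sec:Wasserstein metric}.

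For the uniform estimates on $\nabla\varphi^t$ I would follow the strategy of \cite{Amassad_Zhou25}. The pointwise bound $\|\nabla\varphi^t\|_{L^\infty}\leq C$ is essentially free: $\nabla\varphi^t(x) = x - T^t(x)$, and one can select a representative of the displacement in a fundamental cell of diameter of order one. The BV bound comes from semi-concavity: lifting $\varphi^t$ to a periodic function on $\Rd$ and using its $c_2$-concavity, the function $\varphi^t(x) - \tfrac12 |x|^2$ can be written as an infimum of affine functions, hence it is concave, which yields $D^2\varphi^t \leq I$ in the distributional sense with a constant independent of $t$. Combining this one-sided Hessian bound with the torus identity $\int_\Td \Delta\varphi^t\,\dx = 0$ (which holds by periodicity) upgrades the estimate to a two-sided total variation bound: writing $D^2\varphi^t = I\mathcal{L}^d - \nu^t$ with $\nu^t$ a positive matrix-valued Radon measure, the vanishing trace integral forces $|\nu^t|(\Td) \leq d$, and hence $\|D^2\varphi^t\|_{\mathcal{M}(\Td)}\leq C$ uniformly in $t$.

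The main obstacle is the careful handling of the torus cut locus in the differentiation formula, and the uniformity in $t$ of the semi-concavity constant when only $\mu_i(t)\in L^\infty(\Td)$ regularity is assumed. Both issues are treated in \cite{Amassad_Zhou25}, whose argument I would adapt here. Collecting the differentiation identity, the $L^\infty$ bound, and the BV bound completes the proof.
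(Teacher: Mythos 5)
Your proposal is correct and takes essentially the same route as the paper. The paper's treatment of this lemma is almost entirely by reference — it cites \cite[Theorem 8.4.7]{Ambrosio_Gigli_Savare08} and \cite{Amassad_Zhou25} for the differentiation formula, and in a subsequent remark points out that on the torus $\nabla\varphi^t(x)=x-T^t(x)$ is trivially bounded in $L^\infty$ and that the $BV$ bound follows from the one-sided Hessian estimate $D^2\varphi^t\le I_d$. Your write-up reproduces exactly these steps, merely filling in the details the paper leaves implicit: the metric absolute continuity of the curves needed to invoke the AGS theorem, the negligibility of the cut locus for the torus adaptation, and the trace argument (one-sided semi-concavity combined with $\int_{\Td}\Delta\varphi^t\,\dx=0$ by periodicity) that upgrades $D^2\varphi^t\le I_d$ to a two-sided total-variation bound. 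No genuinely different idea or decomposition is introduced.
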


\begin{rem}
The previous bounds do not hold on an unbounded domain. Indeed  $\nabla\varphi^t(x)= x-T^t(x)$ where $T^t$ is the transport map from $\mu_0(t)$ to $\mu_1(t)$, and we can immediately see on the torus that this quantity is bounded in $L^{\infty}$. The $BV$ bound can easily be obtain by using the well known fact that $D^2\varphi\le I_d$. Such a reasoning does not hold on $\R^d$, for instance $x-T(x)$ can be large if mass is transported very far. This is the main reason to work on the torus. 
\end{rem}

Another notion that is going to be relevant to us is the one of $\lambda$-convexity. In the Eulerian setting we say that a function $F$ is $\lambda$-convex if $F(x) + \frac{\lambda}{2} |x|^2$ is convex. Let us now consider the gradient flow problem $\dot{x}(t) = - \nabla F (x(t))$ with initial datum $x(0) = x_0$. 
Then, for two solutions $x_1, x_2$ it follows that
\begin{align*}
    \frac{\diff}{\dt} \frac{1}{2} | x_1 (t) - x_2(t)|^2 & =  (x_1 (t) - x_2(t) ) \cdot ( x_1'(t) -x_2'(t) )  = -  ( x_1 (t) - x_2(t) ) \cdot ( \nabla F(x_1(t)) - \nabla F(x_2(t)) ) \\
    & \leq - \lambda | x_1 (t) - x_2 (t) |^2.
\end{align*}
Thus, from Gr\"onwall's inequality we have that
\begin{equation*}
    |x_1 (t) - x_2 (t) | \leq e^{-\lambda t} | x_1(0) - x_2(0)|
\end{equation*}
which, for instance, implies uniqueness of solutions. This argument can be extended to $(\mathcal{P} (\Td) , \mathcal{W}_2 )$. In order to do that we first need to introduce the notion of $\lambda$-convexity along constant speed geodesic curves, \cite{McCann97}. 
We say that a curve $\gamma : [0,1] \longrightarrow \mathcal{P} (\Td)$ is a \textit{constant speed curve} if
\begin{equation*}
    \mathcal{W}_2 (\gamma_s, \gamma_t) = (t-s) \mathcal{W}_2 (\gamma_0 , \gamma_1) \quad \text{for all } 0 \leq s \leq t \leq 1.
\end{equation*}
Thus, we can introduce the definition of $\lambda$-geodesically convex functionals
\begin{defn}
    For $\lambda \in \R$ (positive or negative), 
    we say $\mathcal{F}$ is $\lambda$-geodesically convex if for any $\nu_0, \nu_1 \in \mathrm{Dom} (\mathcal{F})$ there exists a constant speed geodesic $\gamma$ with $\gamma_0 = \nu_0$, $\gamma_1 = \nu_1$ such that 
    \begin{equation*}
        \mathcal{F} [\gamma_t] \leq (1-t) \mathcal{F}[\gamma_0] + t \mathcal{F} [\gamma_1] - \frac{\lambda}{2} t(1-t) \mathcal{W}_2(\gamma_0 , \gamma_1)^2 \quad \text{for all } t \in [0,1].
    \end{equation*}
    Furthermore, we say $\mf$ is displacement convex if $\lambda \geq 0$.
\end{defn}

Therefore, we can extend the Eulerian problem in order to study the problem $\partial_t \rho = - \nabla_{\mathcal{W}_2} (\mathcal{F} [\rho ])$ with initial datum $\rho (0) = \rho_0$. Hence, if $\mathcal{F}$ is $\lambda$-geodesically convex we have that for two solutions $\rho_1, \rho_2$ we have that
\begin{equation}\label{eq:gronwall_lambda_convex}
    \mathcal{W}_2 (\rho_1(t) , \rho_2(t) ) \leq e^{-\lambda t} \mathcal{W}_2 (\rho_1(0) , \rho_2(0) ),
\end{equation}
see \cite[Theorem 11.2.1]{Ambrosio_Gigli_Savare08}. This result is very fruitful since it helps to understand uniqueness, stability, or the long time behaviour among others.\\

The following lemma can be found in~\cite[Chapter 9]{Ambrosio_Gigli_Savare08} (see also \cite[Theorem 2.1]{Carrillo_McCann_Villani03}).
\begin{lem}[$\lambda$-geodesic convexity of the interaction energy]\label{lem:lambda_convex_interaction}
Let $\lambda\le 0$ and $W:\R^d\to \R^d$ be a $\lambda$-convex function with $W(-x)=W(x)$ for every $x\in\R^d$. Then the functional
$$
\mu\mapsto  \frac{1}{2}\int_{\T^d}\int_{\T^d}W(x-y)\diff \mu(x) \diff \mu(y),\quad \text{for every $\mu\in \mathcal{P}(\Td)$}
$$
is $\lambda$ geodesically convex.
\end{lem}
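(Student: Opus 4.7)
The plan is to build a constant-speed geodesic from an optimal coupling and transfer the pointwise $\lambda$-convexity of $W$ to the functional. First, I would fix an optimal coupling $\pi \in \Gamma(\nu_0, \nu_1)$ for the cost $c_2$ and, for each $(x,y) \in \supp \pi$, choose (Borel-measurably) a lift $(\tilde x, \tilde y) \in \Rd \times \Rd$ realizing $|\tilde x - \tilde y|^2 = c_2(x,y)$. Setting $e_t(x,y) \coloneqq (1-t)\tilde x + t\tilde y \pmod{\mathbb{Z}^d}$ and $\gamma_t \coloneqq (e_t)_\#\pi$, standard optimal-transport theory \cite[Chap.~7]{Ambrosio_Gigli_Savare08} guarantees that $\{\gamma_t\}_{t \in [0,1]}$ is a constant-speed geodesic from $\nu_0$ to $\nu_1$.

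Next, denoting $J[\mu] \coloneqq \frac{1}{2}\iint_{\Td \times \Td} W(x-y)\diff\mu(x)\diff\mu(y)$, I would use the identity $\gamma_t \otimes \gamma_t = (e_t \times e_t)_\#(\pi \otimes \pi)$ to obtain
$$
J[\gamma_t] = \frac{1}{2} \iiiint W\bigl((1-t)(\tilde x_1 - \tilde x_2) + t(\tilde y_1 - \tilde y_2)\bigr) \diff\pi(x_1,y_1)\diff\pi(x_2,y_2),
$$
then apply the pointwise $\lambda$-convexity inequality $W((1-t)a+tb) \le (1-t)W(a) + tW(b) - \frac{\lambda}{2}t(1-t)|a-b|^2$ with $a = \tilde x_1 - \tilde x_2$ and $b = \tilde y_1 - \tilde y_2$, and integrate against $\pi \otimes \pi$. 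This yields
$$
J[\gamma_t] \le (1-t)J[\nu_0] + tJ[\nu_1] - \frac{\lambda}{2}t(1-t)Q,
$$
where
$$
Q \coloneqq \frac{1}{2}\iiiint \bigl|(\tilde x_1 - \tilde y_1) - (\tilde x_2 - \tilde y_2)\bigr|^2 \diff\pi \diff\pi = \int |\tilde x - \tilde y|^2 \diff\pi - \Bigl|\int (\tilde x - \tilde y)\diff\pi\Bigr|^2 \le \mathcal{W}_2^2(\nu_0, \nu_1).
$$
Finally, since $\lambda \le 0$, the factor $-\frac{\lambda}{2}t(1-t)$ is nonnegative, so $-\frac{\lambda}{2}t(1-t)Q \le -\frac{\lambda}{2}t(1-t)\mathcal{W}_2^2(\nu_0,\nu_1)$, yielding exactly the claimed $\lambda$-geodesic convexity inequality.

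The main subtlety is the rigorous handling of the torus lift: one must select the representatives $(\tilde x, \tilde y)$ in a Borel-measurable way and ensure that $W$ evaluated on differences of lifts agrees with the natural interaction on $\Td$, which is automatic when $W$ is periodic or compactly supported in a fundamental domain (the setting of interest in the paper, since the relevant $W_\eps$ have vanishing support). The evenness $W(-x) = W(x)$ guarantees the computation is insensitive to the labeling of endpoints in the coupling. On $\Rd$ the argument requires none of these considerations and is entirely standard; the full torus statement is recorded in \cite[Chap.~9]{Ambrosio_Gigli_Savare08}.
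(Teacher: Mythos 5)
The paper does not prove this lemma in the text; it simply cites \cite[Chapter~9]{Ambrosio_Gigli_Savare08} and \cite[Theorem~2.1]{Carrillo_McCann_Villani03}, so there is no in-paper argument to compare against. What you give is the standard displacement-interpolation proof from those references, and it is correct: push forward an optimal coupling $\pi$ through the linear interpolation of lifts to get a constant-speed geodesic $\gamma_t$, rewrite $J[\gamma_t]$ as an integral of $W$ evaluated at affine combinations against $\pi\otimes\pi$, apply the pointwise $\lambda$-convexity inequality, and identify the error term $Q$ as $\mathcal{W}_2^2(\nu_0,\nu_1)$ minus the square of a mean, so that $0\le Q\le\mathcal{W}_2^2(\nu_0,\nu_1)$. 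Since $\lambda\le 0$ the coefficient $-\frac{\lambda}{2}t(1-t)$ is nonnegative, and $Q\le\mathcal{W}_2^2$ gives the claimed bound. This fills in, faithfully, exactly what the cited references do; it is a different presentation from the commented-out ``above-the-tangent'' argument of \cite{Craig17} that an earlier draft of this section used, but it is the argument those two citations point to.

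Two minor points worth noting. First, the sign convention: the paper's Eulerian definition reads ``$F(x)+\frac{\lambda}{2}|x|^2$ is convex,'' which would flip the sign of the correction term, but the way $\lambda_{\ee,\alpha}$ is later computed from $-\|(D^2W_{\ee,\alpha})_-\|_{L^\infty}$, and the Gr\"onwall factor $e^{-\lambda t}$, are both consistent with the conventional definition ``$F(x)-\frac{\lambda}{2}|x|^2$ is convex.'' That is the inequality you use, so you are following the convention the paper actually means; the paper's stated definition has a sign typo. Second, the torus subtlety you flag at the end is real: when you write $J[\gamma_t]$ as an integral of $W$ at $(1-t)(\tilde x_1-\tilde x_2)+t(\tilde y_1-\tilde y_2)$, and then of $W(\tilde x_1-\tilde x_2)$ and $W(\tilde y_1-\tilde y_2)$ to recognize $J[\nu_0]$ and $J[\nu_1]$, you are implicitly treating $W$ as $\mathbb{Z}^d$-periodic; this is harmless for the compactly supported $W_{\ee,\alpha}$ of the paper but needs to be stated. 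By contrast, the hypothesis $W(-x)=W(x)$ is not actually used anywhere in your computation; it is a natural assumption for an interaction kernel (and needed elsewhere in the paper for integration by parts), but it plays no role in this convexity argument.
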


\subsection{The nonlocal operators}\label{sec:Nonlocal operators}
In this project we will work with several nonlocal operators. Here, we present them and study some of their properties, bridging them with their local analogous. First, we recall the definition of $B_{\eps}$ from~\ref{def:Bee}:
$$
B_{\ee}[f] = \int_{\T^d}\frac{\omega_{\ee}(y)}{\ee^2}(f(x)-f(x-y)).
$$

Then, we introduce the operator
\begin{equation*}
    S_\ee [f] (x,y) \coloneqq \frac{\sqrt{\omega_\ee (y)}}{\sqrt{2} \ee} (f(x-y) - f(x)),
\end{equation*}
where $\omega_\ee$ is the mollifying sequence from Definition~\ref{def:Mollifying sequence}. We can use this operator to approximate the gradient.
For the sake of completeness we also comment some of its properties already discussed in \cite[Lemma 3.4]{Elbar_Skrzeczkowski23}.

\begin{lem}
    The operator $S_\ee$ satisfies:
    \begin{enumerate}[label=(S$_{\arabic*}$)]\label{lem:S properties} 
    \item 
    \label{S linear}  
    $S_\ee$ is a linear operator that commutes with derivatives with respect to $x$.

    \item 
    \label{S (ii)}
    For all functions $f, g : \Td \rightarrow \R$ we have
    \begin{align*}
        S_\ee [fg] (x,y) & - S_\ee [f] (x,y) g (x) - S_\ee [g] (x,y) f(x) = \\
        & \frac{\sqrt{\omega_\ee (y)}}{\sqrt{2} \ee}[(f(x-y) - f(x)) (g(x-y) - g(x))].
    \end{align*} 

    \item 
    \label{S integration by parts}
    For all $f, g \in L^2 (\Td )$
    \begin{equation*}
        \left\langle B_\ee [f] (\cdot) , g(\cdot ) \right\rangle_{L^2(\Td)} = \left\langle S_\ee [f] (\cdot , \cdot) , S_\ee [g] (\cdot , \cdot) \right\rangle_{L^2(\Td \times \Td)}  
    \end{equation*} 
    with $B_\ee$ defined in \eqref{def:Bee}.

    \item 
    \label{S convergence}
    If $\left\lbrace f_\ee \right\rbrace$ is strongly compact in $L^2(0,T ; H^1 (\Td ))$ and $\varphi \in L^{\infty} ((0,T) \times \Td )$ we have as $\ee\to 0$ (up to a subsequence not relabeled)
    \begin{equation*}
        \int_0^T \int_\Td \int_\Td (S_\ee [f_\ee] )^2 \varphi \rightarrow  \int_0^T \int_\Td| \nabla f |^2 \varphi
    \end{equation*}
    where $f$ is the strong limit of $f_\ee$.
\end{enumerate}
\end{lem}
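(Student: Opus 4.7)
The plan is to verify items (S1)--(S4) in order; the first three are algebraic identities stemming directly from the definition of $S_\ee$, while (S4) is the only analytically delicate point.

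For (S1), linearity of $f\mapsto S_\ee[f]$ is immediate since $f\mapsto f(x-y)-f(x)$ is linear, and commutation with $\partial_x$ follows by differentiating under the integral since $\omega_\ee(y)$ is independent of $x$. For (S2), I expand
$(f(x-y)-f(x))(g(x-y)-g(x)) = f(x-y)g(x-y) - f(x-y)g(x) - f(x)g(x-y) + f(x)g(x)$
and regroup this as $[f(x-y)g(x-y)-f(x)g(x)] - [f(x-y)-f(x)]g(x) - [g(x-y)-g(x)]f(x)$; multiplying by $\sqrt{\omega_\ee(y)}/(\sqrt 2\,\ee)$ yields the claimed identity.

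For (S3) I write
$\langle S_\ee[f], S_\ee[g]\rangle_{L^2(\Td\times\Td)} = \iint_{\Td\times\Td}\frac{\omega_\ee(y)}{2\ee^2}(f(x-y)-f(x))(g(x-y)-g(x))\dy\dx$
and expand the product. Translation invariance of the Lebesgue measure on the torus ($x\mapsto x+y$) gives $\iint \omega_\ee(y)f(x-y)g(x-y)\dx\dy = \iint \omega_\ee(y)f(x)g(x)\dx\dy$, while the symmetry $\omega_\ee(-y)=\omega_\ee(y)$ combined with $y\mapsto -y$ identifies $\iint \omega_\ee(y)f(x)g(x-y)\dx\dy$ with $\iint \omega_\ee(y)f(x-y)g(x)\dx\dy$. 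Collecting the four resulting contributions collapses the expression to $\int_\Td B_\ee[f](x)g(x)\dx$.

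For (S4), the rescaling $y=\ee z$ recasts the integral as
$\int_0^T\!\int_\Td\!\int_{B(0,1)}\tfrac{\omega_1(z)}{2}\bigl|\tfrac{f_\ee(x)-f_\ee(x-\ee z)}{\ee}\bigr|^2\varphi\,\dz\dx\dt$.
I would pass to the limit in two moves. First, the linear $L^2$ bound $\|S_\ee[g]\|_{L^2(\Td\times\Td)} \leq C \|\nabla g\|_{L^2(\Td)}$, obtained from the integral Taylor remainder formula and the compact support of $\omega_1$, combined with the factorisation
$(S_\ee[f_\ee])^2 - (S_\ee[f])^2 = S_\ee[f_\ee-f]\,\bigl(S_\ee[f_\ee]+S_\ee[f]\bigr)$
and the assumed strong convergence $f_\ee\to f$ in $L^2(0,T;H^1(\Td))$, allows me to replace $f_\ee$ by its limit $f$ in the quadratic integrand up to an error that vanishes in $L^1((0,T)\times\Td\times\Td)$. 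Second, with $f$ now fixed, density approximation by smooth $\widetilde f\in C^\infty$ reduces the claim to smooth data: Taylor's theorem gives $(\widetilde f(x)-\widetilde f(x-\ee z))/\ee \to z\cdot\nabla\widetilde f(x)$ in $L^2(\Td)$, and the second-moment identity $\int \omega_1(z)z_iz_j\dz = \tfrac{2}{d}\delta_{i,j}$ from Definition~\ref{def:Mollifying sequence} evaluates $\int \omega_1(z)(z\cdot\nabla\widetilde f)^2\dz$ in terms of $|\nabla\widetilde f|^2$. The regularity $\varphi\in L^\infty$ suffices because the convergence is in $L^1$.

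The main obstacle is (S4). Because $(S_\ee[f_\ee])^2$ depends quadratically on the sequence $f_\ee$, one cannot simply pass to the weak limit, and a direct term-by-term Taylor argument fails since $f\in H^1$ need not be pointwise twice differentiable. The resolution is precisely the combination of the linear $L^2$ estimate on $S_\ee$ with the factorisation $a^2-b^2=(a-b)(a+b)$, which reduces the question to a fixed target $f$; the moment conditions on $\omega_1$ then close the argument.
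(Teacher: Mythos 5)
The paper does not actually prove this lemma; it refers the reader to [Elbar\_Skrzeczkowski23, Lemma 3.4]. Your self-contained argument is correct and follows the standard route for such statements: (S1)--(S3) are direct algebraic identities obtained exactly as you do by expanding the difference quotient, using translation invariance of Lebesgue measure on the torus and the symmetry $\omega_\ee(-y)=\omega_\ee(y)$; and for (S4), the combination of the linear $L^2$ bound $\|S_\ee[g]\|_{L^2(\Td\times\Td)}\le C\|\nabla g\|_{L^2(\Td)}$ with the factorisation $a^2-b^2=(a-b)(a+b)$ to reduce to a fixed target, followed by density and the second-moment identity, is precisely the right mechanism.

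One detail worth flagging, though it is a defect of the paper and not of your argument: if you actually carry out the last evaluation with Definition~\ref{def:Mollifying sequence} as written (which prescribes $\int\omega_1 z_iz_j\,\dz=\tfrac{2}{d}\delta_{ij}$), you obtain $\tfrac12\int\omega_1(z\cdot\nabla\widetilde f)^2\,\dz=\tfrac1d|\nabla\widetilde f|^2$, not $|\nabla\widetilde f|^2$, so the limit in (S4) would carry a spurious factor $1/d$. The same normalisation also makes \eqref{def:Bee} inconsistent: with these moments, $B_\ee[\rho]\to -\tfrac1d\Delta\rho$ rather than $-\Delta\rho$. The normalisation should read $\int\omega_1 z_iz_j\,\dz=2\delta_{ij}$ for both \eqref{def:Bee} and (S4) to hold as stated. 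Your phrasing (``evaluates \ldots in terms of $|\nabla\widetilde f|^2$'') quietly elides the constant; making the constant explicit would surface this inconsistency in the paper's hypotheses.
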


This operator corresponds with a nonlocal approximation of the gradient and it will help us to get \textit{a priori} bounds.

\begin{lem}
\label{lem:nonlocal operators}
 The  gradient and Laplacian can be bounded by their nonlocal counterparts in the following way: there exists $C$ such that for all $f\in L^{1}(\Td)$ depending only on the $L^{1}(\Td)$ norm of $f$ such that:
    \begin{align}
        \| \nabla (f \ast \widetilde\omega_\ee) \|_{L^2(\Td)}
        ^2& \leq C\iint_{\T^d\times\T^d} \left|S_\ee [f \ast \widetilde\omega_\ee] \right|^2 \dy \dx  +  C\frac{\ee^2}{\widetilde\ee^{d+4}} , \label{eq:Lr NL Gradient} \\
        \| \Delta (f \ast \widetilde\omega_\ee ) \|_{L^2 (\Td)}^2 & \leq C
        \iint_{\T^d\times\T^d} \left|S_\ee [\nabla (f \ast \widetilde\omega_\ee)] \right|^2 \dy \dx   +C\frac{\ee^2}{\widetilde\ee^{d+6}} . \label{eq:L2 NL Laplacian}
    \end{align}
\end{lem}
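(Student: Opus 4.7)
I would derive both inequalities from a single abstract estimate,
\[
\|\nabla h\|_{L^2}^2 \le C\iint_{\Td\times\Td}|S_\ee[h]|^2 \dx \dy + C\ee^2\|D^4 h\|_{L^\infty}\|h\|_{L^1},
\]
valid for any sufficiently smooth $h$, and then specialize to $h = f\ast\widetilde\omega_\ee$ (first inequality) and $h = \nabla(f\ast\widetilde\omega_\ee)$ (second inequality, applied componentwise).

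The key ingredient is a fourth-order Taylor expansion of $h(x-y)-h(x)$ about $y=0$. Integrating against $\omega_\ee(y)/\ee^2$ and using that $\omega_\ee$ is symmetric (so its first- and third-order moments vanish) together with the second-moment normalization $\int y_i y_j \omega_1 \dy = \frac{2}{d}\delta_{ij}$ from Definition~\ref{def:Mollifying sequence}, I obtain
\[
B_\ee[h](x) = -\tfrac{1}{d}\Delta h(x) + E_h(x), \qquad \|E_h\|_{L^\infty} \le C\ee^2\|D^4 h\|_{L^\infty}.
\]
The symmetry-driven cancellation of the third-order moments is precisely what upgrades the remainder from $O(\ee)$ to $O(\ee^2)$; without it the resulting error would scale as $\ee/\widetilde\ee^{d+3}$ instead of $\ee^2/\widetilde\ee^{d+4}$. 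Property~\ref{S integration by parts} combined with integration by parts for the local Laplacian on the torus then gives
\[
\iint |S_\ee[h]|^2 \dx \dy = \langle B_\ee[h], h\rangle_{L^2} = \tfrac{1}{d}\|\nabla h\|_{L^2}^2 + \langle E_h, h\rangle_{L^2},
\]
and H\"older's inequality $|\langle E_h, h\rangle| \le \|E_h\|_{L^\infty}\|h\|_{L^1}$ together with rearranging produce the abstract bound.

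For the first inequality I take $h = g = f\ast\widetilde\omega_\ee$; Young's convolution inequality yields $\|g\|_{L^1}\le\|f\|_{L^1}$ and $\|D^4 g\|_{L^\infty}\le\|f\|_{L^1}\|D^4\widetilde\omega_\ee\|_{L^\infty}\le C\|f\|_{L^1}\widetilde\ee^{-d-4}$, producing the error $C\|f\|_{L^1}^2\,\ee^2/\widetilde\ee^{d+4}$. For the second inequality, I apply the abstract bound componentwise to $h = \nabla g$: on the torus $\|\Delta g\|_{L^2}^2 \le d\|D^2 g\|_{L^2}^2 = d\sum_i\|\nabla(\partial_i g)\|_{L^2}^2$ (in fact equality holds by Parseval), while $\|D^5 g\|_{L^\infty}\le C\|f\|_{L^1}\widetilde\ee^{-d-5}$ and $\|\nabla g\|_{L^1}\le C\|f\|_{L^1}\widetilde\ee^{-1}$ combine to give the error $C\|f\|_{L^1}^2\,\ee^2/\widetilde\ee^{d+6}$.

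The only subtlety is the bookkeeping of the Taylor expansion—tracking the cancellations of odd moments and the exact contribution of the second-moment condition—since this is where the $\ee^2$ gain comes from; everything else reduces to Young's inequality with the standard derivative scalings $\|D^k\widetilde\omega_\ee\|_{L^p}\simeq \widetilde\ee^{-k-d/p'}$ of the mollifier.
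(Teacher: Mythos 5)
Your proof is correct, but it takes a genuinely different route from the paper's. The paper expands the squared difference $\iint\frac{\omega_\ee(y)}{2\ee^2}|g(x-y)-g(x)|^2\,\dy\,\dx$ directly with only a \emph{second}-order Taylor expansion (Lagrange remainder); the leading term gives $\tfrac{1}{d}\|\nabla g\|_{L^2}^2$, the quadratic-in-remainder piece is nonnegative and can be dropped, and the cross term is bounded via Cauchy--Schwarz and Young by $C\ee^2\|D^2 g\|_{L^2}^2$. With $\|D^2(f\ast\widetilde\omega_\ee)\|_{L^2}\le\|D^2\widetilde\omega_\ee\|_{L^2}\|f\|_{L^1}\lesssim\widetilde\ee^{-d/2-2}$ this yields the error $\ee^2/\widetilde\ee^{d+4}$. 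You instead expand $B_\ee[h]$ itself to fourth order, use the vanishing of the first and third moments to get $B_\ee[h]=-\tfrac{1}{d}\Delta h + E_h$ with $\|E_h\|_{L^\infty}\lesssim\ee^2\|D^4 h\|_{L^\infty}$, pair against $h$ via the integration-by-parts identity for $S_\ee$, and control the error by $L^\infty$--$L^1$ duality. Both routes give the same exponents. Your route is cleaner (no absorption by Young's inequality) but needs $D^4\omega_1$ and $D^5\omega_1$, i.e.\ more regularity than the $W^{2,\infty}$ explicitly listed in Definition~\ref{def:Mollifying sequence} — this is harmless here only because $\omega_1$ is also assumed smooth.

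One claim in your write-up is slightly overstated: you say the third-moment cancellation is ``precisely what upgrades the remainder from $O(\ee)$ to $O(\ee^2)$.'' That is true for your bilinear-form argument, but the $\ee^2$ gain is not intrinsically tied to it. The paper's expansion of the square never invokes the third moment at all; there the $\ee^2$ comes from the compact support of $\omega_\ee$ in $B_\ee(0)$ (so $\int\omega_\ee(y)|y|^4\,\dy\simeq\ee^4$) together with Young's inequality on the cross term $\ee\|\nabla g\|_{L^2}\|D^2 g\|_{L^2}$. So the cancellation is a feature of your chosen decomposition, not a necessity for the lemma. Also, a minor point: on the torus you have $\|\Delta g\|_{L^2}=\|D^2 g\|_{L^2}$ exactly by Parseval, so the prefactor $d$ you write in $\|\Delta g\|_{L^2}^2\le d\|D^2 g\|_{L^2}^2$ is superfluous (though harmless, as it is absorbed into $C$).
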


\begin{rem}
Note that $\iint_{\T^d\times\T^d} \left|S_\ee [f \ast \widetilde\omega_\ee] \right|^2 \dy \dx=\frac{1}{2}\mathcal{D}_\ee [ f \ast \widetilde\omega_\ee ]$ where $\mathcal{D}_\ee$ is defined in~\eqref{eq:def_diss}. 
\end{rem}

\begin{proof}
    Let us start by performing a Taylor expansion $f\ast\widetilde{\omega}_\ee(x-y) = f\ast\widetilde{\omega}_\ee(x) -y\cdot\nabla f\ast\tilde{\omega}_{\eps}(x) + \f{1}{2}y^T D^2(f\ast\tilde{\omega}_{\eps})(\theta)y$ for some $\theta \in [x,y]$. Then the proof follows by estimating $\|D^2f\ast\widetilde{\omega}_{\ee}\|_{L^{2}}\le \|D^2\widetilde{\omega}_\ee\|_{L^{2}}\|f\|_{L^{1}}\le \frac{C}{\ee^{\frac{d}{2}+2}}$ and the fact that $\omega_{\eps}$ is of zero average and compactly supported in a ball with radius $\ee$. The proof of the second inequality is similar, note that we do not recover estimates on the full Hessian because only the diagonal terms are seen by the kernel, which satisfies~\Cref{def:Mollifying sequence} (but they can be found by elliptic regularity of the Laplacian). 
\end{proof}

In the local case, the Gagliardo-Nirenberg inequality plays a key role on the analysis of the problem, see \cite{Carrillo_Esposito_Falco_FJ23}. Here we recall this inequality and we introduce a nonlocal approximation of it that will appear repetitively.

\begin{lem}[Gagliardo-Nirenberg interpolation inequality]
    Let $\theta\in[0,1]$ and  $1\le p,q\le +\infty$ such that $\frac{1}{p}=\theta\left(\frac{1}{2}-\frac{1}{d}\right)+\frac{1-\theta}{q}$. Then, it holds
    \begin{itemize}
        \item Local:
            \begin{equation}\label{eq:GN}
                \|f\|_{L^p(\Td)}\le C\|\nabla f\|_{L^2(\Td)}^\theta\|f\|_{L^q(\Td)}^{1-\theta} .
            \end{equation}
        \item Nonlocal:
            \begin{equation}\label{eq:NL GN}
                \| f \ast \widetilde\omega_\ee \|_{L^p (\Td)} \leq C  \left( \left(\iint_{\T^d\times\T^d} \left|S_\ee [f \ast \widetilde\omega_\ee] \right|^2 \dy \dx + C\frac{\ee^2}{\widetilde\ee^{d+4}} \right) ^{\frac{\theta}{2}}     \| f \ast \widetilde\omega_\ee \|_{L^q(\Td)}^{1-\theta} \right).
            \end{equation}
    \end{itemize}
    where $C$ denotes a positive constant depending on $p,q$ and $\|f\|_{L^1(\T^d)}$. In the case $d=2$, $\theta\in[0,1)$.
\end{lem}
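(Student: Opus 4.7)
The local inequality \eqref{eq:GN} is the classical Gagliardo--Nirenberg inequality on the torus: for the stated relation between $p$, $q$, $\theta$ and $d$, one can invoke (for example) the statement found in standard references on Sobolev spaces (Brezis, Nirenberg's original paper), so I would simply cite it rather than reprove it. Note that the appearance of $\|f\|_{L^1(\Td)}$ in the constant $C$ in the nonlocal version comes from a Poincaré--type argument: on the torus, the $L^q$ and $L^p$ norms of $f\ast\widetilde\omega_\ee$ already control low-frequency modes, and one can absorb the constant part via the $L^1$ mass.

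For the nonlocal inequality \eqref{eq:NL GN}, the plan is to apply the local version to the smooth function $g \coloneqq f\ast\widetilde\omega_\ee$:
\begin{equation*}
    \| f\ast\widetilde\omega_\ee \|_{L^p(\Td)} \;\leq\; C \, \| \nabla(f\ast\widetilde\omega_\ee) \|_{L^2(\Td)}^{\theta} \, \| f\ast\widetilde\omega_\ee \|_{L^q(\Td)}^{1-\theta}.
\end{equation*}
Then I would substitute the bound \eqref{eq:Lr NL Gradient} from \Cref{lem:nonlocal operators}, namely
\begin{equation*}
    \| \nabla(f\ast\widetilde\omega_\ee) \|_{L^2(\Td)}^{2} \;\leq\; C \iint_{\Td\times\Td} |S_\ee[f\ast\widetilde\omega_\ee]|^2 \dy \dx + C\,\frac{\ee^2}{\widetilde\ee^{d+4}},
\end{equation*}
and raise to the power $\theta/2$. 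This yields exactly the right-hand side of \eqref{eq:NL GN}, with the convolution's contribution playing the role of the gradient and the Taylor-expansion remainder $C\ee^2/\widetilde\ee^{d+4}$ tracked along.

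There is essentially no real obstacle beyond book-keeping: the work has already been packaged into \Cref{lem:nonlocal operators}. The only technical point worth noting is that when $d=2$ one must exclude the endpoint $\theta=1$ (the condition $\tfrac{1}{p}=\theta(\tfrac{1}{2}-\tfrac{1}{d})+\tfrac{1-\theta}{q}$ degenerates and the local GN does not embed $H^1$ into $L^\infty$), which is exactly the hypothesis stated. In higher dimensions the full range $\theta\in[0,1]$ is admissible, as reflected in the assumptions. Since the constant $C$ depends only on $p, q$ and $\|f\|_{L^1(\Td)}$ (via normalization of $f\ast\widetilde\omega_\ee$ by its mean), it is uniform in $\ee$ and $\widetilde\ee$, which is what will be needed when we eventually pass to the limit $\ee, \widetilde\ee \to 0$ under condition \eqref{def:widetilde ee}.
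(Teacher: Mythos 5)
Your proof matches the paper's argument exactly: the paper's proof is a single sentence, "The nonlocal Gagliardo--Nirenberg follows from combining \eqref{eq:Lr NL Gradient} with \eqref{eq:GN}," which is precisely the substitution you carry out (apply the classical inequality to $g = f\ast\widetilde\omega_\ee$, then replace $\|\nabla g\|_{L^2}^\theta$ by the bound from \Cref{lem:nonlocal operators} raised to the power $\theta/2$). Your extra remarks on the $d=2$ endpoint and on $\|f\ast\widetilde\omega_\ee\|_{L^1}=\|f\|_{L^1}$ giving a constant uniform in $\ee,\widetilde\ee$ are correct and merely make explicit what the paper leaves implicit.
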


\begin{proof}
The nonlocal Gagliardo-Nirenberg follows from combining  \eqref{eq:Lr NL Gradient} with \eqref{eq:GN}.
\end{proof}

\normalcolor

\subsection{Properties of the energy functional}\label{sec:free-energy}
The nonlocal Gagliardo-Nirenberg inequality allows us to recover uniform bounds from the free-energy $\mathcal{F}_{\ee , \alpha }$, analogously to the local case already discussed in \cite{Carrillo_Esposito_Falco_FJ23}. Let us  set
\[
2^{\ast}\coloneqq\begin{cases}
    +\infty &\mbox{if } d=1,2,\\
    \frac{2d}{d-2} & \mbox{if } d\ge3.
\end{cases}
\]
Hence, in the following proposition we include first \textit{a priori} estimates. 

\begin{prop}[Regularity results]\label{prop:Some properties}
    Assume $\rho \in L^1 (\Td)$ is nonnegative and let $1 < m < m_c:= 2 + \frac{2}{d}$. The following properties hold.
    \begin{enumerate}
        \item \underline{Lower bound for the free energy}: Let $\Dee[\rho \ast \widetilde{\omega}_\ee] <\infty$. Then there exists $\beta>0$ such that $\mathcal{F}_{\ee, \alpha } [\rho]$ is bounded from below as
        \begin{equation}\label{eq:Free energy bdd below}
            \mathcal{F}_{\ee, \alpha } [\rho] \geq - C\left(1 + \frac{\ee^2}{\widetilde\ee^{d +4}}  \right)^{\beta},
        \end{equation}
        where $C>0$ depends on $\|\rho\|_{L^{1}(\Td)}$ and is independent of $\ee$. 
        \item \underline{$H^1$-bound}: Assume $\mathcal{F}_{\ee , \alpha} [\rho] < \infty$, then $\Dee[\rho \ast \widetilde{\omega}_\ee] <\infty$ and
        \begin{align}
            \| \nabla (\rho \ast \widetilde\omega_\ee ) \|_{L^2 (\Td)} & \leq C \left(  \Dee[\rho \ast \widetilde{\omega}_\ee] + \frac{\ee^2}{\widetilde\ee^{d + 4} }\right)^{\frac{1}{2}} \label{eq:Grad convolved unif}.
        \end{align}
        Note that by definition of $\Dee[\rho \ast \widetilde{\omega}_\ee]$, we also obtain
        $$
        \|S_{\eps}[\rho\ast\widetilde{\omega}_{\eps}]\|_{L^2(\T^d\times\T^d)}\le C.
        $$
        \item \underline{$L^p$-regularity}: Assume $\mathcal{F}_{\ee , \alpha} [\rho]  < \infty$, then $\rho \ast \widetilde\omega_\ee \in L^p (\Td)$ for any $p\in[1,2^*]$, $d\neq 2$, and for any $p\in[1,2^*)$ when $d=2$. In particular, 
            \begin{equation}\label{eq:Convolution Lp}
                \| \rho \ast \widetilde\omega_\ee \|_{L^p (\Td)} \leq C.
            \end{equation}
            Note that $1 < m < 2 + \frac{2}{d} < 2^{\ast}$. 
    \end{enumerate}
\end{prop}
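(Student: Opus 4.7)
The plan is to establish the three parts in the natural order (1), (2), (3), with (1) providing the main content.

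For part (1), I would exploit that $\frac{\ee^\ast}{2}\me_2[\rho \ast R^{1/2}_\alpha] \geq 0$, so
\begin{equation*}
\mf_{\ee,\alpha}[\rho] \geq \tfrac{1}{4}\Dee[\rho \ast \widetilde\omega_\ee] - \me_m[\rho \ast \widetilde\omega_\ee].
\end{equation*}
To control the negative term I would apply the nonlocal Gagliardo--Nirenberg inequality~\eqref{eq:NL GN} with $p = m$, $q = 1$, using the mass bound $\|\rho \ast \widetilde\omega_\ee\|_{L^1(\Td)} \leq \|\rho\|_{L^1(\Td)}$ and identifying $\iint_{\Td \times \Td}|S_\ee[\rho \ast \widetilde\omega_\ee]|^2 \diff y \diff x = \tfrac{1}{2}\Dee[\rho \ast \widetilde\omega_\ee]$, obtaining
\begin{equation*}
\me_m[\rho \ast \widetilde\omega_\ee] \leq C \left( \Dee[\rho \ast \widetilde\omega_\ee] + \tfrac{\ee^2}{\widetilde\ee^{d+4}} \right)^{m\theta/2}.
\end{equation*}
Solving the scaling relation $\tfrac{1}{m} = \theta(\tfrac{1}{2} - \tfrac{1}{d}) + (1-\theta)$ yields $m\theta/2 = d(m-1)/(d+2)$, and the subcriticality assumption $m < m_c = 2 + 2/d$ is exactly what makes this exponent strictly less than $1$. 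Young's inequality then absorbs a small multiple of $\Dee$ into the positive $\tfrac{1}{4}\Dee$, yielding $\mf_{\ee,\alpha}[\rho] \geq \tfrac{1}{8}\Dee[\rho \ast \widetilde\omega_\ee] - C(1 + \ee^2/\widetilde\ee^{d+4})$, from which the claim follows with $\beta = 1$.

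For part (2), I would first note that, for fixed $\ee, \alpha > 0$, smoothness of the mollifiers $\widetilde\omega_\ee$ and $R^{1/2}_\alpha$ combined with Young's convolution inequality gives $\rho \ast \widetilde\omega_\ee, \rho \ast R^{1/2}_\alpha \in L^\infty(\Td)$, so both $\me_m[\rho \ast \widetilde\omega_\ee]$ and $\me_2[\rho \ast R^{1/2}_\alpha]$ are automatically finite. Rearranging the definition of $\mf_{\ee,\alpha}[\rho]$, the finiteness hypothesis thus forces $\Dee[\rho \ast \widetilde\omega_\ee] < \infty$, and the gradient estimate follows from~\eqref{eq:Lr NL Gradient} in Lemma~\ref{lem:nonlocal operators}. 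Part (3) is then a direct consequence of the nonlocal Gagliardo--Nirenberg inequality~\eqref{eq:NL GN} applied with the $L^1$-bound from mass control and the $S_\ee$-bound from part (2); the stated range of $p$ (strict at $p = 2^\ast$ when $d = 2$) reflects the admissible range of $\theta$ in the GN scaling relation.

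The argument is overall routine; the main subtlety is the bookkeeping of the error term $C\ee^2/\widetilde\ee^{d+4}$ from Lemma~\ref{lem:nonlocal operators}, which propagates through every estimate and which I must keep explicit in the final bound of part (1). The algebraic core is the identification $m\theta/2 < 1 \iff m < m_c$, exactly the subcritical regime of the Gagliardo--Nirenberg exponent; this strict inequality is precisely what lets Young's inequality close the lower bound in (1), and failure of subcriticality would break the whole chain.
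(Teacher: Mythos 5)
Your proposal is correct and takes the same route the paper sketches (and then delegates to the local analogue in \cite{Carrillo_Esposito_Falco_FJ23}): drop the nonnegative $\ee^\ast$-term, apply the nonlocal Gagliardo--Nirenberg inequality~\eqref{eq:NL GN} with $p=m$, $q=1$, and absorb by Young's inequality. Your explicit identification $m\theta/2 = d(m-1)/(d+2) < 1 \iff m < m_c$ correctly supplies the detail the paper leaves implicit, and the bookkeeping of the $\ee^2/\widetilde\ee^{d+4}$ error term through each step is sound.
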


\begin{proof}
    The proof is simply a consequence of~\eqref{eq:NL GN} and Young's inequality. As it is completely similar to the local case as in~\cite[Proposition 3.1]{Carrillo_Esposito_Falco_FJ23} we do not repeat the proof here.
\end{proof}

\subsection{Sketch of proof}\label{sec:Sketch proof}
Our first result is the existence and uniqueness of solutions to the system~\eqref{eq:Nonlocal CH}. 
The equation has the structure 
\[
\partial_t \rho + \dive(\rho\, \bold{v}_{\ee, \alpha}[\rho])=0,
\]
where $\bold{v}_{\ee, \alpha}[\rho]$ is smooth, being given by a convolution with smooth kernels. 
Such equations can be seen as aggregation equations with smooth interaction kernels, 
for which several existence results are available. 
Here we adopt a simple approach based on viscosity approximation combined with a fixed point argument.  

More precisely, for fixed smooth $v$, the equation
\[
\partial_t\rho - \nu \Delta \rho + \dive(\rho\nabla v) = 0
\]
has smooth solutions. 
Schauder fixed point theorem then yields solutions of
\[
\partial_t\rho - \nu \Delta \rho + \dive(\rho\nabla \bold{v}_{\ee, \alpha}[\rho]) = 0.
\]
Letting $\nu \to 0$, we pass to the limit thanks to weak compactness in $L^2$ 
and the strong convergence of $\bold{v}_{\ee, \alpha}[\rho]$ (again due to the convolution structure). 
This proves existence of weak solutions, \Cref{thm:Nonlocal Existence}. 
Moreover, in regimes where $\bold{v}_{\ee, \alpha}[\rho]$ is Lipschitz (for instance when $m\geq 2$), 
we can also prove uniqueness, that is \Cref{thm:Nonlocal Uniqueness}. 
The argument is based on the $H^{-1}$ norm of the difference between two solutions 
and applying Gronwall’s lemma. 
This approach is standard in the study of aggregation–diffusion equations (see, e.g. \cite{Bertozzi_Slepcev10}).  

\medskip

Our second main result is the convergence from the nonlocal to the local model. 
For clarity, let us temporarily set aside the artificial regularization term involving $\alpha$. 
Without this additional term, we are not able to prove convergence. 
The main obstacle is that the passage to the limit involves $H^1$ estimates on the solutions, 
which we cannot obtain, even depending badly on $\ee$.  

To overcome this, we add a nonlinear diffusion of the form $\frac{\eps^*}{2}\Delta \rho^2$. 
From the entropy dissipation, that is computing $\frac{\diff}{\dt}\int_{\Td}\rho\log\rho$, one obtains bounds of the type
\[
\eps^* \|\nabla \rho
\|_{L^2} \leq C.
\]
Even if the bound depends badly on $\varepsilon$, this is enough to prove convergence as long as $\eps\ll \eps^*$. 
However, simply adding $\eps^*\Delta\rho^2$ directly into the nonlocal equation 
is not relevant for our purposes, 
since our goal is to derive a DPA of a fourth–order equation, 
as explained in the introduction.  

A natural alternative is to introduce instead the regularisation
\[
\eps^*  \dive\big(\rho \nabla \rho \ast R_\alpha\big).
\]
Sending $\alpha \to 0$ yields the term $\frac{\eps^*}{2} \Delta\rho^2$. 
One could then let $\varepsilon \to 0$ with $\eps^*$ depending on $\varepsilon$, 
but this is a double limit procedure as we explain in \Cref{thm:Nonlocal to local H1}. 
In general, we expect to rely only on a single limit depending on $\varepsilon$, \Cref{thm:Nonlocal to local}.  

Our strategy is therefore to keep the nonlocal regularization 
\[
\eps^*\dive\big(\rho \nabla\rho \ast R_\alpha\big)
\]
and to quantify the convergence rate in terms of $\alpha$. 
More precisely, generalizing the method of \cite{Amassad_Zhou25}  in high dimensions (see also \cite{Carrillo_Elbar_Fronzoni_Skrzeczkowski25}  in one dimension) to the aggregation diffusion case, 
we obtain an estimate in the Wasserstein distance of the form
\[
\mathcal{W}_2(\rho_{\varepsilon,\alpha}, \rho_{\varepsilon, 0}) \leq C\frac{\alpha^{r_1}}{\eps^{r_2}} \exp{ \left( {\frac{T}{\eps^{r_2}}} \right)},
\] 
for suitable rates $r_1, r_2$.
Here $\rho_{\varepsilon, 0}$ denotes the solution of the limiting system with $\alpha=0$, 
which includes the term $\frac{\eps^*}{2} \Delta \rho^2$.  

Finally, let $\rho_{\varepsilon,\alpha}$ be the sequence of solutions to the original nonlocal problem. 
For $\alpha=0$, i.e. in the diffusion case with $\frac{\eps^*}{2} \Delta \rho^2$, 
one can choose $\eps^*$ depending on $\varepsilon$ such that a subsequence $\rho_{\varepsilon_k,0}$ 
converges to some limit $\rho$ in some weak topology. 
Combining this with the rate in $\alpha$, we obtain for smooth $\varphi$
\begin{align*}
    \int_{0}^{T}\int_{\T^d}\varphi(\rho_{\varepsilon_k,\alpha_k}- \rho) & \leq \int_{0}^{T}\mathcal{W}_2(\rho_{\varepsilon_k,\alpha_k}(t, \cdot), \rho_{\varepsilon_k,0}(t, \cdot))  + \int_{0}^{T}\int_{\T^d}\varphi(\rho_{\varepsilon_k,0}- \rho)  \\
    & \leq C\, \frac{\alpha_k^{r_1}}{\varepsilon_k^{r_2}}\exp{\left(\frac{T}{\eps_k^{r_2}}\right)} +\int_{0}^{T}\int_{\T^d}\varphi(\rho_{\varepsilon_k,0}- \rho).
\end{align*} 
By choosing $\alpha_k$ appropriately in terms of $\varepsilon_k$, 
both terms vanish as $k\to\infty$, which concludes the proof.  

Finally, taking advantage of this last result we are able to recover a deterministic particle approximation relying on the \textit{blob method} and the $\lambda$-convexity of the regularised free energy, cf~\Cref{thm:Particle approximation}. 

\section{Nonlocal problem: Well-posedness for fixed \texorpdfstring{$\ee , \alpha >0$}{epsilon, alpha > 0}}\label{sec:NL}
The aim of this section is to discuss the well-posedness of the nonlocal problem~\eqref{eq:Nonlocal CH} for fixed $\eps , \alpha >0$. We prove existence when $m > 1$, \Cref{thm:Nonlocal Existence}, and uniqueness when $m \geq 2$, \Cref{thm:Nonlocal Uniqueness}. We note that the proof can be easily adapted on the whole space $\R^d$ (assuming also that the initial condition has a bounded second moment).

The problem \eqref{eq:Nonlocal CH} is a gradient flow with respect to the $2$-Wasserstein metric $\mathcal{W}_2$. Therefore, a possible proof of existence would be to use the celebrated JKO scheme, introduced in \cite{Jordan_Kinderlehrer_Otto98} by Jordan, Kinderlehrer, and Otto to study 
the Fokker-Planck equation from a gradient flow perspective. For example, existence for the local problem \eqref{eq:local CH} is shown via a JKO scheme in \cite{Carrillo_Esposito_Falco_FJ23}. Here, we decide to approach the problem from a fixed point argument,  which we consider very natural since the kernels are smooth. With respect to the uniqueness problem we choose a Gr\"onwall type argument.
Since we only cover the nonlocal problem during this section we will use the notation $\rho \coloneqq \rho_{\ee, \alpha}$ when we refer to the solutions of the problem \eqref{eq:Nonlocal CH}.

\subsection{Existence for a regularized system}\label{sec:Viscosity term}

We add an artificial viscosity term $-\nu\Delta\rho$ and first prove existence for the following system
\begin{equation}\label{eq:Regularised Problem}\tag{P$_{ \nu}$}
  \frac{\partial \rho}{\partial t} - \nu\Delta\rho +  \dive \left( \rho  \bold v_{\ee,\alpha}[\rho] \right) =0
\end{equation}
on $(0,T)\times \Td$ and with $\rho(0,\cdot) = \rho_{0}$ where $\rho_0\in L^{\infty}(\Td)$ with $\rho_{0}\ge 0$, $\|\rho_{0}\|_{L^{1}} =1$ (without loss of generality) and $\nu>0$ fixed. We state the following result.
\begin{prop}
    [Existence for the viscosity problem]\label{thm:Viscosity term Existence}
    Assume $\nu, \ee, \alpha > 0$, and $\rho_0 \in  L^{\infty} (\Td)$. Then, the problem \eqref{eq:Regularised Problem} has a classical solution on $(0,T)\times\T^d$.
\end{prop}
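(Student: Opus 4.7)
The plan is to construct a solution via Schauder's fixed point theorem applied to a linearised version of the equation. Concretely, for a fixed $\tilde\rho \in C([0,T]; L^2(\T^d))$ with $\tilde\rho \geq 0$ and unit mass, introduce the frozen velocity field
\begin{equation*}
    V[\tilde\rho] \coloneqq -B_\ee[\tilde\rho\ast\widetilde\omega_\ee\ast\widetilde\omega_\ee] + \tfrac{m}{m-1}\widetilde\omega_\ee\ast(\tilde\rho\ast\widetilde\omega_\ee)^{m-1} - \eps^*\,\tilde\rho\ast R_\alpha,
\end{equation*}
and let $\rho$ be the solution of the linear Cauchy problem
\begin{equation*}
    \partial_t\rho - \nu\Delta\rho + \dive(\rho\,\nabla V[\tilde\rho]) = 0, \qquad \rho(0,\cdot) = \rho_0.
\end{equation*}
Because $\omega_\ee,\widetilde\omega_\ee,R_\alpha \in W^{2,\infty}$ and $\|\tilde\rho(t,\cdot)\|_{L^1}\leq 1$, all derivatives of $V[\tilde\rho]$ are bounded in $L^\infty((0,T)\times\T^d)$, with constants depending only on $\ee,\widetilde\ee,\alpha,\eps^*$ and the $L^\infty$ norm of $\tilde\rho\ast\widetilde\omega_\ee$ (itself controlled by $\|\tilde\rho\|_{L^1}$ via Young's inequality). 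Standard linear parabolic theory for uniformly parabolic equations with smooth drift then produces a unique classical solution $\rho$ on $(0,T)\times\T^d$.

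The next step is to identify a closed convex invariant set for the map $T:\tilde\rho\mapsto\rho$. Integrating the equation preserves the $L^1$ norm, so $\|\rho(t,\cdot)\|_{L^1}=1$. The parabolic maximum principle applied after expanding $\dive(\rho\nabla V[\tilde\rho]) = \nabla V\cdot\nabla\rho + \rho\,\Delta V$ yields positivity, and
\begin{equation*}
    \|\rho(t,\cdot)\|_{L^\infty} \leq \|\rho_0\|_{L^\infty}\exp\bigl(t\,\|\Delta V[\tilde\rho]\|_{L^\infty((0,T)\times\T^d)}\bigr) \leq M,
\end{equation*}
for a constant $M = M(\ee,\widetilde\ee,\alpha,\eps^*,m,T,\|\rho_0\|_{L^\infty})$ independent of $\tilde\rho$ in the invariant set
\begin{equation*}
    \mathcal{K} \coloneqq \{\tilde\rho\in C([0,T];L^2(\T^d)) \,:\, \tilde\rho\geq 0,\ \|\tilde\rho(t,\cdot)\|_{L^1}=1,\ \|\tilde\rho\|_{L^\infty((0,T)\times\T^d)}\leq M\}.
\end{equation*}
Thus $T(\mathcal{K})\subset\mathcal{K}$, and $\mathcal{K}$ is a closed convex subset of $C([0,T];L^2(\T^d))$.

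To apply Schauder, it remains to verify compactness and continuity of $T$ on $\mathcal{K}$. Multiplying the linear equation by $\rho$ and integrating gives $\nu\|\nabla\rho\|_{L^2(0,T;L^2)}^2 \leq C(M,\ee,\widetilde\ee,\alpha,\eps^*)$, while the equation itself delivers a bound on $\partial_t\rho$ in $L^2(0,T;H^{-1}(\T^d))$. By the Aubin--Lions lemma, the image $T(\mathcal{K})$ is precompact in $C([0,T];L^2(\T^d))$. Continuity of $T$ follows from the fact that convolution with a fixed smooth kernel is continuous from $L^2$ into $C^k$ for every $k$, so if $\tilde\rho_n\to\tilde\rho$ in $\mathcal{K}$ then $\nabla V[\tilde\rho_n]\to\nabla V[\tilde\rho]$ and $\Delta V[\tilde\rho_n]\to\Delta V[\tilde\rho]$ uniformly, and continuous dependence on coefficients for the linear parabolic problem transfers this to $\rho_n\to\rho$ in $C([0,T];L^2(\T^d))$. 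Schauder's theorem then yields a fixed point $\rho$, which is a weak solution of \eqref{eq:Regularised Problem}. Finally, since $\rho\in L^\infty$ the frozen velocity $V[\rho]$ is $C^\infty$ in space, and a standard bootstrap through Schauder parabolic estimates promotes $\rho$ to a classical solution.

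The only subtle point in this scheme is to close the $L^\infty$ bound \emph{before} knowing $\rho$, but this is precisely what allows the smoothness of the convolution kernels: the $L^1$ control coming from mass conservation is automatically upgraded to an $L^\infty$ control on $\Delta V[\tilde\rho]$, and the constants in the resulting maximum-principle estimate depend on $\tilde\rho$ only through its $L^1$ norm, which is $1$ on the invariant set. Everything else reduces to the classical theory of linear parabolic equations on the torus.
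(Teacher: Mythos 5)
Your approach is the same as the paper's: construct a fixed point of the map $\tilde\rho \mapsto \rho$, where $\rho$ solves the linear parabolic equation with the velocity $\nabla V[\tilde\rho]$ frozen, using Schauder; verify invariance via the maximum principle and $L^1$ control of the kernel-smoothed coefficients; verify compactness via Aubin--Lions; verify continuity; then bootstrap to a classical solution. The only material difference is your choice of ambient space for the fixed point, and this is where a genuine gap appears.

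You claim that the bounds $\|\nabla\rho\|_{L^2(0,T;L^2)}\leq C/\sqrt{\nu}$ and $\|\partial_t\rho\|_{L^2(0,T;H^{-1})}\leq C$ yield, via Aubin--Lions, precompactness of $T(\mathcal{K})$ in $C([0,T];L^2(\T^d))$. This is not what Aubin--Lions (in the Simon form) delivers: with $\rho$ bounded only in $L^2(0,T;H^1)$ and $\partial_t\rho$ in $L^2(0,T;H^{-1})$, the conclusion is precompactness in $L^2(0,T;L^2(\T^d))$, not in $C([0,T];L^2)$. To obtain compactness in $C([0,T];L^2)$ you would need a uniform bound in $L^\infty(0,T;H^1)$, which the energy estimate does not provide (and cannot, with $\rho_0$ only in $L^\infty\subset L^2$, uniformly down to $t=0$). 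The simplest repair is to run the fixed-point argument in $L^2((0,T)\times\T^d)$, which is exactly what the paper does: the invariant set $X$ is carved out of $L^2((0,T)\times\T^d)$, Aubin--Lions is applied with the embedding $H^1\hookrightarrow\hookrightarrow L^2\hookrightarrow H^{-1}$ to obtain compactness in $L^2((0,T)\times\T^d)$, and continuity of $S$ is then deduced from this compactness together with uniqueness for the linear problem. If you replace $C([0,T];L^2)$ by $L^2((0,T)\times\T^d)$ throughout, your argument closes; the remaining ingredients (frozen-velocity linearization, $L^1$-based uniform bounds on $\nabla V$ and $\Delta V$, maximum principle, and the final Schauder bootstrap for regularity) are all correct and match the paper.
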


In fact $\rho_0$ is only in $L^{\infty}(\T^d)$ so on $t\in[0,T]$ our solution is weak, but becomes smooth and classical for $t>0$ by standard parabolic regularization.  Before proving this statement let us recall some properties of the Fokker-Planck equation \begin{equation}\label{eq:system_rewritten_boldv}
        \p_t \rho - \nu \Delta\rho + \dive(\rho v)=0.
    \end{equation}

The problem \eqref{eq:system_rewritten_boldv} with initial condition $\rho_{0}\in L^1(\Td) \cap L^{\infty} (\Td)$, $v$ smooth enough, and $\nu > 0$ has a unique classical solution. This solution also satisfies the Maximum Principle. The literature concerning this problem is extensive, we refer for instance to \cite{LSU68}. We also obtain some \textit{a priori} bounds.

\begin{lem}[Estimates on the viscous equation]\label{lem:est_nu}
    Assume $\rho_{0}\in L^{\infty}(\Td)$, $v$ given and smooth enough, and $\nu > 0$. Then, the unique classical solution $\rho$ of the problem \eqref{eq:system_rewritten_boldv} is such that for all $p \in [1, \infty]$ (with convention $\frac{p-1}{p}=1$ when $p=\infty$) it follows that:
    \begin{align*}
& \int_{\Td}\rho(t) = \int_{\Td}\rho_0,\quad \text{for all $t\in[0,T]$} ,\\
&\| \rho \|_{L^{\infty} (0,T; L^p(\Td))} \leq \|\rho_0 \|_{L^p(\Td)} \exp{\left( \frac{(p-1) \| v \|_{L^{\infty}(0,T; W^{1,\infty}( \Td ))} T}{p} \right)},\\
&\| \nabla \rho \|_{L^2((0,T) \times \Td)} \leq \frac{\| \rho_0 \|_{L^2(\Td)}}{\sqrt{2 \nu}} \left( 1 + \| v \|_{L^{\infty}(0,T; W^{1,\infty}( \Td ))} T \exp{ \left(\| v \|_{L^{\infty}(0,T; W^{1,\infty}( \Td ))} T  \right) } \right)^{\frac{1}{2}},\\
&\|\partial_t\rho\|_{L^{2}(0,T; H^{-1}(\Td))}\le \nu \| \nabla \rho \|_{L^2((0,T) \times \Td)} + \| v \|_{L^{\infty}(0,T; L^{\infty}(\Td))} \| \rho \|_{L^{2} ((0,T)\times \Td )}.
    \end{align*}
\end{lem}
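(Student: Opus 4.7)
These are classical energy estimates for the linear viscous Fokker--Planck equation, obtained by testing the equation against suitable functions of $\rho$. Since $v$ is smooth and the classical solution is itself smooth, all integrations by parts are justified, and I would deduce the four inequalities in order, using each to feed the next. The argument is entirely standard; there is no genuine obstacle.

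For the mass conservation I simply integrate the equation over $\Td$: both $\nu\int_\Td \Delta \rho\, \dx$ and $\int_\Td \dive(\rho v)\, \dx$ vanish by periodicity, giving $\frac{\diff}{\diff t}\int_\Td \rho\, \dx = 0$. For the $L^p$ bound with $p \in [1,\infty)$, I multiply the equation by $\rho^{p-1}$ and integrate by parts; using the identity
\begin{equation*}
\int_\Td \rho^{p-1}\dive(\rho v)\,\dx = \frac{p-1}{p}\int_\Td \rho^p \dive(v)\,\dx,
\end{equation*}
I arrive at
\begin{equation*}
\frac{1}{p}\frac{\diff}{\diff t}\| \rho\|_{L^p(\Td)}^p + \nu(p-1)\int_\Td \rho^{p-2}|\nabla \rho|^2 \,\dx \;\leq\; \frac{p-1}{p}\| v\|_{L^\infty(0,T;W^{1,\infty}(\Td))}\| \rho\|_{L^p(\Td)}^p .
\end{equation*}
Dropping the nonnegative dissipation term and applying Gr\"onwall's lemma yields the claimed bound. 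The case $p = \infty$ follows either by passing to the limit $p \to \infty$ or, more directly, from the maximum principle for the linear parabolic equation.

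For the gradient estimate I test with $\rho$ (i.e.\ take $p=2$ above but now retain the dissipation term) and integrate in time, obtaining
\begin{equation*}
\frac{1}{2}\| \rho(T)\|_{L^2(\Td)}^2 + \nu \int_0^T \| \nabla \rho\|_{L^2(\Td)}^2 \,\dt \;\leq\; \frac{1}{2}\| \rho_0\|_{L^2(\Td)}^2 + \frac{1}{2}\| v\|_{L^\infty(0,T;W^{1,\infty}(\Td))}\int_0^T \| \rho\|_{L^2(\Td)}^2 \,\dt.
\end{equation*}
Substituting the $L^\infty(0,T;L^2(\Td))$ estimate from the previous step into the right-hand side, dividing by $\nu$, and taking a square root produces the stated inequality. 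Finally, for the $H^{-1}$ bound on $\partial_t \rho$ I argue by duality: for any $\varphi \in H^1(\Td)$, the equation gives
\begin{equation*}
\langle \partial_t \rho, \varphi \rangle = -\nu \int_\Td \nabla \rho \cdot \nabla \varphi \,\dx + \int_\Td \rho\, v \cdot \nabla \varphi \,\dx ,
\end{equation*}
hence by Cauchy--Schwarz $\| \partial_t \rho\|_{H^{-1}(\Td)} \leq \nu \| \nabla \rho\|_{L^2(\Td)} + \| v\|_{L^\infty(\Td)}\| \rho\|_{L^2(\Td)}$; squaring, integrating over $(0,T)$, and taking a square root yields the last estimate. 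The only point requiring a bit of care is the ordering: the $L^p$ estimate with $p=2$ must be established first so that it can be inserted into the right-hand side of the $L^2(H^1)$ inequality.
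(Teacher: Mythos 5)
Your proof is correct and follows essentially the same path as the paper: mass conservation by integrating the equation, the $L^p$ bound by testing with $\rho^{p-1}$ and Grönwall (the paper tests with $p\rho^{p-1}$, which is the same up to scaling), the gradient bound by keeping the $p=2$ dissipation term and feeding in the $L^\infty_t L^2_x$ estimate, and the $H^{-1}$ bound by duality. The only cosmetic difference is your mention of the maximum principle as an alternative derivation of the $L^\infty$ case, which the paper obtains instead by letting $p\to\infty$.
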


\begin{proof}

The first estimate just follows by integrating in space the equation, using the periodic boundary conditions. We now focus on the $L^p$ estimates. 
  \begin{align*}
        \frac{d}{dt} \int_\Td \rho^p & = p\nu \int_\Td \rho^{p-1} \Delta \rho - p \int_\Td \rho^{p-1} \dive(\rho v) = - p(p-1) \nu \int_\Td \rho^{p-2} |\nabla\rho|^2 - (p-1) \int_\Td \rho^p \dive (v) \\
        & \leq (p-1) \| v \|_{W^{1, \infty} ((0,T) \times \Td)} \int_\Td \rho^p.
    \end{align*}
    Hence, due to Grönwall inequality
    \begin{equation*}
        \| \rho \|_{L^{\infty} (0,T; L^p(\Td))} \leq \|\rho_0 \|_{L^p(\Td)} \exp{\left( \frac{(p-1) \| v \|_{L^{\infty}(0,T; W^{1,\infty}( \Td ))} T}{p} \right)}.
    \end{equation*}
    Letting $p\rightarrow \infty$ we also obtain the $L^\infty$ estimate.
    Furthermore, from the first computation with $p=2$ it also follows that
    \begin{align*}
        2\nu \int_0^T \int_{\Td} |\nabla \rho |^2 \leq \int_{\Td} \rho_0^2 + \| v \|_{W^{1,\infty}( (0,T) \times \Td )} \int_0^T \int_{\Td} \rho^2 .
    \end{align*}
    Therefore,
    \begin{equation*}
        \sqrt{2 \nu} \| \nabla \rho \|_{L^2((0,T) \times \Td)} \leq \| \rho_0 \|_{L^2(\Td)} \left( 1 + \| v \|_{L^{\infty}(0,T; W^{1,\infty}( \Td ))} T \exp{ \left(\| v \|_{L^{\infty}(0,T; W^{1,\infty}( \Td ))} T  \right) } \right)^{\frac{1}{2}}.
    \end{equation*}
    In order to obtain the time regularity, we take a test function $\varphi \in L^{2}(0,T; H^1(\Td))$, then
    \begin{align*}
        \left|\int_{0}^{T}\int_{\Td}\p_{t}\rho \varphi\diff x\diff t\right| & = \left| - \nu \int_0^T \int_{\Td} \nabla \varphi \cdot \nabla \rho + \int_0^T \int_{\Td} \nabla \varphi \cdot v \rho \right| \\
        & \leq \nu \| \nabla \rho \|_{L^2((0,T) \times \Td)} \| \nabla \varphi \|_{L^2((0,T)\times\Td)} \\
        & \quad + \| v \|_{L^{\infty}(0,T; L^{\infty}(\Td))} \| \rho \|_{L^{2}  ((0,T)\times \Td )} \| \nabla \varphi \|_{L^2((0,T)\times\Td)}.
    \end{align*}

Taking the supremum over $\varphi$ with $\|\varphi\|_{L^{2}(0,T; H^{1}(\T^d))}\le 1$ yields the $L^{2}(0,T; H^{-1}(\T^d))$ bound by duality.
\end{proof}

With these remarks, we use Schauder's fixed point theorem in order to prove the existence result.

\begin{proof}[Proof of \Cref{thm:Viscosity term Existence}]
Let us fix a time $T>0$. We split the proof in several steps. 

\textit{Step 0. Define the fixed point operator.} 
First we define 
\begin{equation*}
    \mathcal{T}: W^{1 ,\infty} ( [0,T] \times \Td ) \rightarrow L^{2}((0,T) \times \Td)
\end{equation*}
as the functional that maps a velocity $v$ with the solution of \eqref{eq:system_rewritten_boldv} for initial data $\rho_0$ and velocity $v$. From the previous computations, this map is well-defined. By a slight abuse of notation, as a second step of the construction of our fixed-point operator we consider the map 
\begin{equation*}
    \bold v_{\ee,\alpha}: L^{2}((0,T) \times \Td) \rightarrow W^{1,\infty} ([0,T] \times \Td),
\end{equation*}
where we recall
\begin{equation*}
    \bold v_{\ee,\alpha} [\rho] = \nabla \left( - B_\ee [\rho \ast \widetilde\omega_\ee \ast \widetilde\omega_\ee] + \frac{m}{m-1} \widetilde\omega_{\ee} \ast (\rho \ast \widetilde\omega_\ee)^{m-1} - \ee^*  R_{\alpha}\ast\rho \right).
\end{equation*}
We realise that $\bold{v}_{\ee , \alpha} [\rho]$ is smooth since the kernels (i.e. $\omega_\ee$, $\widetilde{\omega}_\ee$, and $R_\alpha$) are smooth. Furthermore, we control the derivatives of the velocity,
\begin{equation}\label{estimate_boldv}
        \|D^{k} \bold v_{\ee,\alpha} [\rho]\|_{L^{\infty}(0,T; L^{\infty}(\Td))}\le C (\|\omega_1\|_{W^{k+1,\infty}}, \|R\|_{W^{k+1,\infty}}) (\|\rho \|_{L^{1}(\Td)}+ \| \rho \|_{L^{1}(\Td)}^{m-1})\le C,
\end{equation}    
where $C$ depends on $\ee,\alpha$ but these quantity are fixed. Finally, we define the operator we use for the fixed point argument as
\begin{equation*}
    S= \mathcal{T} \circ \bold{v}_{\ee, \alpha} : L^{2}((0,T) \times \Td) \rightarrow L^{2}((0,T) \times \Td).
\end{equation*}

\textit{Step 1. $S$ maps $X$ to $X$.} Taking into account \Cref{lem:est_nu} and estimate \eqref{estimate_boldv} we choose the constant   $A = \exp{((C(\|\omega_1\|_{W^{2,\infty}}, \|R\|_{W^{2,\infty}}) (\|\rho \|_{L^{1}(\Td)}+ \| \rho \|_{L^{1}(\Td)}^{m-1})T)/2)}$. We now show that the set
\begin{equation*}
    X\coloneqq \left\lbrace u \in L^{2}((0,T)\times \Td),\, \|u\|_{L^{2}(0,T;L^{2}(\Td))}\le A\sqrt{T} \|\rho_0 \|_{L^2(\Td)},\, u\ge 0 \text{ a.e. } \, \int_{\Td}u(t)=1\, \text{ for a.e. $t$}   \right\rbrace
\end{equation*}
is such that $S(X) \subseteq X$. The non-negativity of $\rho$ follows from the Maximum Principle considering $0$ as a subsolution. Hence, in view of Lemma~\ref{lem:est_nu} it follows that
\begin{equation*}
    \| S[\rho] \|_{L^{\infty} (0,T; L^2(\Td))} \leq \|\rho_0 \|_{L^2(\Td)} \exp{\left( \frac{\| \bold{v}_{\ee, \alpha} [\rho] \|_{L^{\infty}(0,T; W^{1,\infty}( \Td ))} T}{2} \right)} \leq A \| \rho_0 \|_{L^2 (\Td)}.
\end{equation*}
Therefore,
$$
 \| S[\rho] \|_{L^{2} (0,T; L^2(\Td))} \leq A\sqrt{T}\| \rho_0 \|_{L^2 (\Td)}
$$
and $S(X) \subseteq X$.

\textit{Step 2. $S$ is compact.} Using Lemma~\ref{lem:est_nu} one realises that 
\begin{equation*}
    \| S[\rho]\|_{L^2(0,T; H^1(\Td))} \leq C.
\end{equation*}
Furthermore, we also have the time regularity
\begin{equation*}
    \| \partial_t S[\rho] \|_{L^2(0,T; H^{-1}(\Td) )} \leq C.
\end{equation*}
%
%
Since $S[\rho]$ is bounded in the set 
$$
\{f \in L^{2}(0,T; H^{1}(\Td)), \, \p_{t}f \in L^{2}(0,T; H^{-1}(\Td))\}
$$
we can apply the Aubin-Lions Lemma to conclude compactness in $L^{2}((0,T)\times\Td)$. Therefore $S$ is compact in $X$.

\textit{Step 3. Continuity of $S$}. Let us consider a sequence $g_n \in X$ that converges to some $g \in L^2((0,T) \times \Td)$. Let us also consider the sequence $\rho_n = S[g_n]$. In \textit{Step 2} we prove compactness of $S$ in $X$. Hence, there exists a subsequence in $X$ such that $\rho_n$ converges to $\rho = S[g]$. Finally, since $\rho$ is the unique solution for the problem \eqref{eq:system_rewritten_boldv} with velocity $\bold{v}_{\ee, \alpha} [g]$ (uniqueness for such a parabolic equation is classical) the whole sequence converges. Thus $S$ is continuous.

\textit{Step 4. Schauder fixed point theorem}. Finally, we combine \textit{Step 1-3} with the Schauder fixed point theorem in order to conclude that there exists $\rho$, a weak solution of~\eqref{eq:Regularised Problem} on $(0,T)\times\Td$ with initial condition $\rho(0,\cdot)=\rho_{0}$. Since $\bold{v}_{\ee, \alpha} [\rho ]$ is smooth, from classical theory, the weak solution is also classical. 
\end{proof}

\subsection{Existence for fixed \texorpdfstring{$\ee, \alpha$}{epsilon, alpha}. Viscosity limit \texorpdfstring{$\nu \to 0$}{nu to 0}. Proof of \texorpdfstring{\Cref{thm:Nonlocal Existence}}{Theorem 1.5}}\label{sec:Viscosity limit}
For the sake of clarity, in the following subsection, when we refer to the solution of the problem \eqref{eq:Regularised Problem} obtained in \Cref{thm:Viscosity term Existence} we will denote it by $\rho_{\nu}$.

    Let us take a sequence $\nu_k \rightarrow 0$ as $k \rightarrow \infty$. From Lemma~\ref{lem:est_nu} and Banach-Alaoglu Theorem, it follows that, up to a subsequence,
    \begin{equation*}
        \rho_{\nu_k}\overset{\ast}{\rightharpoonup}\rho \quad \text{weakly star in $L^{\infty}(0,T; L^{\infty}(\T^d))$}.
    \end{equation*}
   Moreover the convolution by a kernel upgrade the weak convergence to a strong convergence in space, and using also the estimate on $\partial_t \rho_{\nu_k}$ which gives the same estimate on $\partial_t \rho_{\nu_k}\ast \omega$ for smooth kernel $\omega$ we can apply Aubin-Lions lemma to recover that 
    \begin{equation*}
        \bold{v}_{\ee, \alpha} [\rho_{\nu_k}] \to \bold{v}_{\ee, \alpha} [\rho] \quad \text{strongly in $L^{2}((0,T)\times\Td)$}.
    \end{equation*}
    Lemma~\ref{lem:est_nu} also shows that $\nu_k\nabla\rho_{\nu_k}$ converges to 0 strongly in $L^{2}((0,T)\times\Td)$. Indeed
    $$
    \|\nu_k\nabla\rho_{\nu_k}\|_{L^2}= \sqrt{\nu_k}\|\sqrt{\nu_k}\nabla\rho_{\nu_k}\|_{L^{2}}\le C\sqrt{\nu_k}.
    $$
    We know that $\rho_{\nu_k}$ is a classical solution of the problem \eqref{eq:system_rewritten_boldv}, so it is also a weak solution. Then, for all $\varphi \in C^{\infty}_c([0, T) \times \Td)$,
    \begin{equation*}
        -\int_0^T \int_\Td \rho_{\nu_k} \frac{\partial \varphi}{\partial t} \diff x \diff t - \int_{\Td} \rho_0 \varphi (0, x) \diff x =  \int_0^{T} \int_{\Td} \rho_{\nu_k} \bold{v}_{\ee, \alpha} [\rho_{\nu_k}] \cdot \nabla \varphi - \nu_k \int_0^T \int_\Td \nabla \rho_{\nu_k} \cdot \nabla \varphi 
    \end{equation*}
    converges to 
    \begin{equation*}
        -\int_0^T \rho \frac{\partial \varphi}{\partial t} \diff x \diff t - \int_{\Td} \rho_0 \varphi (0) \diff x =  \int_0^{T} \int_{\Td} \rho \bold{v}_{\ee, \alpha} [\rho] \cdot \nabla \varphi .
    \end{equation*}

\subsection{Uniqueness for fixed \texorpdfstring{$\ee, \alpha$}{epsilon, alpha}. Proof of \texorpdfstring{\Cref{thm:Nonlocal Uniqueness}}{Theorem 1.6}}\label{sec:Nonlocal Uniqueness}

    Consider $\rho_1$, $\rho_2$, two weak solutions of the problem \eqref{eq:Nonlocal CH} bounded in $L^{\infty}(0,T; L^{\infty}(\Td))$ with the same initial conditions. We take $\rho\coloneqq\rho_2-\rho_1$ its difference. In particular, $\rho(0, \cdot ) = 0$.  We introduce $\varphi$ such that
    $$
        -\Delta \varphi = \rho 
    $$
    which is well defined as $\int_{\T^d}\rho=0$.
    Observe that $\norm{\nabla\varphi (t , \cdot)}_{L^{2}(\Td)} = \norm{\rho(t, \cdot)}_{H^{-1}(\Td)}$. Also, 
    $$
    -\int_{\T^d}\rho\partial_t\varphi = -\frac{1}{2}\frac{d}{dt}\|\nabla\varphi\|_{L^{2}(\Td)}^2
    $$
    Hence, using $\varphi$ as a test function we recover that
    \begin{align*}
         -\int_\Td \rho \frac{\partial \varphi}{\partial t} & = - \int_\Td \rho \nabla B_\ee [\rho_2 \ast \widetilde\omega_\ee \ast \widetilde\omega_\ee] \cdot \nabla \varphi - \int_\Td \rho_1 \nabla B_\ee [\rho \ast \widetilde\omega_\ee \ast \widetilde\omega_\ee] \cdot \nabla \varphi \\
         & \quad + \frac{m}{m-1} \int_\Td \rho \nabla \widetilde\omega_\ee \ast (\rho_2 \ast \widetilde\omega_\ee)^{m-1} \cdot \nabla \varphi \\
         & \quad + \frac{m}{m-1} \int_\Td \rho_1 \nabla \widetilde\omega_\ee \ast \left[ (\rho_2 \ast \widetilde\omega_\ee)^{m-1} - (\rho_1 \ast \widetilde\omega_\ee)^{m-1} \right] \cdot \nabla \varphi \\
         & \quad - \ee^* \int_\Td \rho \nabla ( R_\alpha \ast \rho_2 ) \cdot \nabla \varphi - \ee^* \int_\Td \rho_1 \nabla ( R_\alpha \ast \rho ) \cdot \nabla \varphi \\
         & = I_1 + I_2 + I_3 + I_4 + I_5 + I_6.
    \end{align*}
    Concerning $I_1$ we use $\rho = - \Delta \varphi$ and the identity
    \begin{equation}\label{eq:Trick}
        \nabla u \Delta u = \dive(\nabla u \otimes \nabla u ) - \frac{1}{2} \nabla | \nabla u |^2 .
    \end{equation}
    Thereby, we obtain that
    \begin{align*}
        |I_1|  & = \left| \int_\Td (\nabla \varphi \otimes \nabla \varphi) : D^2 B_\ee [\rho_2 \ast \widetilde\omega_\ee \ast \widetilde\omega_\ee] - \frac{1}{2} \int_\Td |\nabla \varphi |^2 \Delta B_\ee [ \rho_2 \ast \widetilde\omega_\ee \ast \widetilde\omega_\ee] \right| \\
        & \leq C \| \nabla \varphi \|_{L^2(\Td)}^2 \| D^2 B_\ee[ \rho_2 \ast \widetilde\omega_\ee \ast \widetilde\omega_\ee] \|_{L^\infty (\Td)} \\
        & \leq C \| \nabla \varphi \|_{L^2(\Td)}^2 \frac{1}{\ee^2} \left( \|D^2 \widetilde\omega_\ee \ast \widetilde\omega_\ee \|_{L^\infty (\Td)} + \| D^2 \omega_\ee \ast \widetilde\omega_\ee \ast \widetilde\omega_\ee \|_{L^\infty (\Td)} \right) \| \rho_2 \|_{L^1 (\Td)} ,
    \end{align*}
    which is uniformly bounded by $C\|\nabla\varphi\|_{L^{2}}^2$. In order to deal with $I_2$ we use again $\rho = - \Delta \varphi$. By properties of the convolution we have that
    \begin{align*}
        |I_2| & = \left| \int_\Td \rho_1 \nabla B_\ee [\rho \ast \widetilde\omega_\ee \ast \widetilde\omega_\ee] \cdot \nabla \varphi \right| \\
        & \leq \left| \frac{1}{\ee^2} \int_\Td \rho_1 \nabla (\rho \ast \widetilde\omega_\ee \ast \widetilde\omega_\ee ) \cdot \nabla \varphi \right| +  \left| \frac{1}{\ee^2} \int_\Td \rho_1 \nabla (\rho \ast \omega_\ee \ast \widetilde\omega_\ee \ast \widetilde\omega_\ee ) \cdot \nabla \varphi \right| \\
        & = \left| \frac{1}{\ee^2} \int_\Td \rho_1  (\Delta \varphi \ast \nabla ( \widetilde\omega_\ee \ast \widetilde\omega_\ee )) \cdot \nabla \varphi \right| +  \left| \frac{1}{\ee^2} \int_\Td \rho_1  (\Delta \varphi \ast \nabla (\omega_\ee \ast \widetilde\omega_\ee \ast \widetilde\omega_\ee) ) \cdot \nabla \varphi \right| \\
        & = \left| \frac{1}{\ee^2} \sum_{i,j} \int_\Td \rho_1 (\partial_j \varphi \ast \partial_{ij} (\widetilde\omega_\ee \ast \widetilde\omega_\ee)) \partial_i \varphi  \right| + \left| \frac{1}{\ee^2} \sum_{i,j} \int_\Td \rho_1 (\partial_j \varphi \ast \partial_{ij} (\omega_\ee \ast \widetilde\omega_\ee \ast \widetilde\omega_\ee)) \partial_i \varphi  \right| \\
        & \leq \frac{1}{\ee^2} C \| \rho_1 \|_{L^\infty(\Td)} \left( \|D^2 \widetilde\omega_\ee \ast \widetilde\omega_\ee \|_{L^\infty (\Td)} + \| D^2 \omega_\ee \ast \widetilde\omega_\ee \ast \widetilde\omega_\ee \|_{L^\infty (\Td)} \right) \| \nabla \varphi \|_{L^2(\Td)}^2.
    \end{align*}
    We continue with $I_3$. We use $\rho = - \Delta \varphi$ and \eqref{eq:Trick} again to obtain that
    \begin{align*}
        |I_3 | & = \left| \frac{m}{m-1} \int_\Td \rho \nabla \widetilde\omega_\ee \ast (\widetilde\omega_\ee \ast \rho_2)^{m-1} \cdot \nabla \varphi \right| \\
        & = \left|  \frac{m}{m-1} \int_\Td (\nabla \varphi \otimes \nabla \varphi ) : [D^2 \widetilde\omega_\ee \ast (\widetilde\omega_\ee \ast \rho_2)^{m-1}] - \frac{m}{2(m-1)} \int_\Td | \nabla \varphi |^2 [\Delta \widetilde\omega_\ee \ast (\widetilde\omega_\ee \ast \rho_2)^{m-1}] \right| \\
        & \leq C \| \nabla \varphi \|_{L^2 (\Td)}^2 \| D^2 \widetilde\omega_\ee \|_{L^\infty (\Td)} \| (\widetilde\omega_\ee \ast \rho_2 )^{m-1} \|_{L^1(\Td)}\\
        &\le C\| \nabla \varphi \|_{L^2 (\Td)}^2 .
    \end{align*}
    For $I_4$, we recall $\rho_i \in L^{\infty}(0,T; L^{\infty}(\Td))$. Then,  $\rho_i \ast \widetilde\omega_\ee$ are uniformly bounded by a constant $A$. For $m \geq 2$, the function $f(s)=s^{m-1}$ is Lipschitz in $[0,A]$. Combining everything, for $m\geq 2$, we get that
    \begin{align*}
        |I_4| & = \frac{m}{m-1} \left| \int_\Td \rho_1 \nabla \widetilde\omega_\ee \ast \left[ (\rho_2 \ast \widetilde\omega_\ee)^{m-1} - (\rho_1 \ast \widetilde\omega_\ee)^{m-1} \right] \cdot \nabla \varphi \right| \\
        & \leq C \int_\Td |\rho_1| |\nabla \widetilde\omega_\ee| |\rho_2 \ast \widetilde\omega_\ee - \rho_1 \ast \widetilde\omega_\ee | |\nabla\varphi|\\
        & \leq C \| \rho_1 \|_{L^\infty(\Td)} \| \nabla \widetilde\omega_\ee \|_{L^\infty(\Td)} \| \rho\ast\widetilde\omega_\ee \|_{L^2(\Td)} \|\nabla \varphi \|_{L^2(\Td)} \\
        & = C \| \rho_1 \|_{L^\infty(\Td)} \| \nabla \widetilde\omega_\ee \|_{L^\infty(\Td)} \left\| \sum_i\partial_i\varphi\ast\partial_i\widetilde\omega_\ee \right\|_{L^2(\Td)} \|\nabla \varphi \|_{L^2(\Td)} \\
        & \leq C \| \nabla \varphi \|_{L^2(\Td)}^2.
    \end{align*}
    The terms $I_5$ and $I_6$ are analogous to $I_3$ and $I_4$ respectively for $m=2$. In the end we obtain 
    $$
    \frac{d}{dt}\|\nabla\varphi(t)\|_{L^{2}(\Td)}^2\le C\|\nabla\varphi(t)\|_{L^{2}(\Td)}^2, \quad \|\nabla \varphi (0, \cdot )\|_{L^2(\Td)} = 0.
    $$

Gr\"onwall's inequality yields $\| \nabla \varphi (t, \cdot )\|_{L^2(\Td)} = \| \rho (t , \cdot ) \|_{H^{-1}(\Td)} = 0$ for all $0 < t < T$. Note that this reasoning also yields continuous dependance with respect to the initial data.

\section{Convergence nonlocal to local}\label{sec:Non-L to L}
In  the following, to make the distinction, we use the notation $\rho_{\ee, \alpha}$ for the solutions of the problem \eqref{eq:Nonlocal CH}. We keep the notation $\rho_\nu$ for the solutions of \eqref{eq:Regularised Problem} since whenever we require them, the parameters $\ee$ and $\alpha$ are fixed. The main goal of this section is to prove convergence of the solutions of the problem \eqref{eq:Nonlocal CH} to the solutions of \eqref{eq:CH} when $m=2$. Therefore, from now on we assume $m=2$. In order to achieve this goal we divide this section into three subsections. First, in Subsection~\ref{sec:NL GF properties} we introduce some properties and \textit{a priori} bounds that follow from the gradient flow structure of the problem. Afterwards, in Subsection~\ref{sec:Uniform ee} we compile preliminary results regarding compactness in $\ee$ and $\alpha$. Finally, in Subsection~\ref{sec:NL to L} we show the nonlocal to local convergence and we prove \Cref{thm:Nonlocal to local}.

\subsection{Gradient flow structure of the problem}\label{sec:NL GF properties}

\Cref{lem:est_nu} provides estimates for the viscous equation with a prescribed velocity. Applied to the viscous approximation of~\eqref{eq:Nonlocal CH} these bounds are not uniform in $\ee, \alpha$ because they involve $\|\bold v_{\ee,\alpha}[\rho]\|_{W^{1,\infty}(\Td)}$. Therefore, the aim of this subsection is to study the gradient flow structure of the problem in order to recover estimates uniformly in $\ee, \alpha$. In particular, we take advantage of the following \textit{a priori} estimates. 


\begin{prop}\label{lem:Gradient flow}
    Solutions of the problem \eqref{eq:Nonlocal CH} constructed in Theorem~\ref{thm:Nonlocal Existence} are such that for $t\in(0,T)$ 
    \begin{equation}\label{eq:Grad flow inequality}
        \begin{split}
         &\mf_{\ee,\alpha} [\rho_{\ee, \alpha}(t, \cdot )] + \int_{0}^{t} \int_\Td \rho_{\ee, \alpha} \left|  \bold{v}_{\ee, \alpha}[\rho_{\ee, \alpha}] \right|^2 \leq \mf_{\ee,\alpha} [\rho_{\ee, \alpha}(0, \cdot )] ,\\
          &\|\partial_t\rho_{\eps,\alpha}\|_{L^{2}(0,t; H^{-s}(\Td))}\le C \|\rho_{0}\|_{L^{1}}^{\frac{1}{2}}\|\sqrt{\rho_{\eps,\alpha}} \bold{v}_{\ee,\alpha}\|_{L^{2}((0,t)\times\T^d)}
        \end{split}
    \end{equation}
 where $s>\frac{d}{2}+1$. In particular, from \Cref{prop:Some properties} and since the initial condition is of bounded free energy, there exists $C$  independent of $\ee$ and $\alpha$ such that
 \begin{align}
      \sup_{t\in (0,T)}\| \nabla (\rho_{\ee, \alpha} \ast \widetilde\omega_\ee ) (t, \cdot) \|_{L^2 (\Td)} & \leq C, \label{eq:Uniform H1 Section 4} \\
      \sup_{t\in (0,T)} \| ( \rho_{\ee, \alpha} \ast \widetilde\omega_\ee )(t, \cdot)  \|_{L^p (\Td)} & \leq C \label{eq:Uniform Lp Section 4}\\
      \sup_{t\in (0,T)}
        \|S_{\eps}[\rho\ast\widetilde{\omega}_{\eps}]\|_{L^2(\T^d\times\T^d)}&\le C \label{eq:Uniform H1_2 Section 4}.
 \end{align}
 for any $p\in[1,2^*]$, $d\neq 2$, and for any $p\in[1,2^*)$ when $d=2$.
\end{prop}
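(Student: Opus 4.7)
The plan is to derive the energy identity at the level of the viscous approximation~\eqref{eq:Regularised Problem}, where solutions are classical, and then pass to the limit $\nu\to 0$ using lower semicontinuity. The starting observation is that~\eqref{eq:Nonlocal CH} is the formal $\mw_2$-gradient flow of $\mf_{\ee,\alpha}$: exploiting the symmetry of $\omega_\ee$, $\widetilde\omega_\ee$ and $R_\alpha^{\f12}$, together with property~\ref{S integration by parts} for the dissipation term, a direct first-variation computation gives
\begin{equation*}
    \f{\delta\mf_{\ee,\alpha}}{\delta\rho}[\rho] = B_\ee[\rho\ast\widetilde\omega_\ee\ast\widetilde\omega_\ee] - \f{m}{m-1}\widetilde\omega_\ee\ast(\rho\ast\widetilde\omega_\ee)^{m-1} + \ee^* R_\alpha\ast\rho,
\end{equation*}
and hence $\bold v_{\ee,\alpha}[\rho]=-\nabla\f{\delta\mf_{\ee,\alpha}}{\delta\rho}[\rho]$.

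For the classical solution $\rho_\nu$, testing~\eqref{eq:Regularised Problem} against $\f{\delta\mf_{\ee,\alpha}}{\delta\rho}[\rho_\nu]$, which is smooth thanks to the convolutions with $\omega_\ee,\widetilde\omega_\ee,R_\alpha$, and integrating by parts yields
\begin{equation*}
    \mf_{\ee,\alpha}[\rho_\nu(t)] + \int_0^t\!\int_\Td \rho_\nu|\bold v_{\ee,\alpha}[\rho_\nu]|^2\,\dx\,\dt = \mf_{\ee,\alpha}[\rho_0] + \nu\int_0^t\!\int_\Td \bold v_{\ee,\alpha}[\rho_\nu]\cdot\nabla\rho_\nu\,\dx\,\dt.
\end{equation*}
The viscous remainder is controlled by $\nu\,\|\bold v_{\ee,\alpha}[\rho_\nu]\|_{L^2}\|\nabla\rho_\nu\|_{L^2}\le C\sqrt{\nu}$, combining the $\sqrt{\nu}$-weighted gradient estimate in~\Cref{lem:est_nu} with the $L^\infty$-control on $\bold v_{\ee,\alpha}[\rho_\nu]$ coming from its convolution structure and the preserved $L^1$-mass, both uniform in $\nu$. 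Passing to the limit along the subsequence $\nu_k\to 0$ from~\Cref{sec:Viscosity limit}, one uses lower semicontinuity of $\mf_{\ee,\alpha}$ at (a.e.) $t$ under the weak-$*$ convergence $\rho_{\nu_k}\overset{*}{\rightharpoonup}\rho_{\ee,\alpha}$, and a weak-strong product argument in the dissipation integral based on the strong convergence $\bold v_{\ee,\alpha}[\rho_{\nu_k}]\to\bold v_{\ee,\alpha}[\rho_{\ee,\alpha}]$ in $L^2$ already established in \Cref{sec:Viscosity limit}. This gives the first line of~\eqref{eq:Grad flow inequality}, and the uniform bounds~\eqref{eq:Uniform H1 Section 4}--\eqref{eq:Uniform H1_2 Section 4} then follow by applying \Cref{prop:Some properties} at each fixed time together with the assumption $\sup_{\ee,\alpha}\mf_{\ee,\alpha}[\rho_0]<\infty$.

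The time-regularity estimate is a duality argument: for $\varphi\in H^s(\Td)$ with $s>\f{d}{2}+1$, Sobolev embedding gives $\|\nabla\varphi\|_{L^\infty}\le C\|\varphi\|_{H^s}$, so from the weak formulation and Cauchy--Schwarz,
\begin{equation*}
    \bigl|\langle\partial_t\rho_{\ee,\alpha},\varphi\rangle\bigr|=\Big|\int_\Td\rho_{\ee,\alpha}\,\bold v_{\ee,\alpha}[\rho_{\ee,\alpha}]\cdot\nabla\varphi\,\dx\Big|\le\|\rho_{\ee,\alpha}\|_{L^1}^{1/2}\big\|\sqrt{\rho_{\ee,\alpha}}\,\bold v_{\ee,\alpha}[\rho_{\ee,\alpha}]\big\|_{L^2(\Td)}\|\nabla\varphi\|_{L^\infty}.
\end{equation*}
Squaring, integrating in time, and invoking mass conservation $\|\rho_{\ee,\alpha}(t)\|_{L^1}=\|\rho_0\|_{L^1}$ yield the second line of~\eqref{eq:Grad flow inequality}. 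The main technical obstacle is the lower semicontinuity step in the passage $\nu_k\to 0$: the dissipation depends nonlinearly on $\rho$ through $\bold v_{\ee,\alpha}[\rho]$, and the argument hinges on the strong convergence of the velocity, which is available here thanks to the convolution smoothing at fixed $\ee,\alpha>0$. This is precisely what will break down in the local limit $\ee\to 0$, motivating the additional $\ee^\ast$-regularisation introduced in~\eqref{eq:Nonlocal CH} and treated in the next section.
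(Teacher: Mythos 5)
Your proof is correct and very close in spirit to the paper's, but the computation at the viscous level takes a genuinely different route. The paper introduces the $\nu$-modified free energy $\mf_{\ee,\alpha,\nu}[\rho]=\nu\int_\Td\rho\log\rho+\mf_{\ee,\alpha}[\rho]$, whose first variation is exactly $\nu\log\rho+\tfrac{\delta\mf_{\ee,\alpha}}{\delta\rho}$; testing \eqref{eq:Regularised Problem} against this gives the perfect-square dissipation
$\tfrac{\diff}{\dt}\mf_{\ee,\alpha,\nu}[\rho_\nu]=-\int_\Td\rho_\nu\,|\nu\nabla\log\rho_\nu-\bold v_{\ee,\alpha}[\rho_\nu]|^2$, so there is no remainder to estimate and the limit $\nu\to 0$ is handled purely by Fatou and the convergences of \Cref{sec:Viscosity limit}. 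You instead test against the \emph{unmodified} variation $\tfrac{\delta\mf_{\ee,\alpha}}{\delta\rho}[\rho_\nu]$, which leaves the cross-term $\nu\int_0^t\int_\Td\nabla\rho_\nu\cdot\bold v_{\ee,\alpha}[\rho_\nu]$ that you then control by $C\sqrt{\nu}$ using the $\sqrt{\nu}$-weighted gradient bound from \Cref{lem:est_nu} together with the $L^\infty$-control of $\bold v_{\ee,\alpha}[\rho_\nu]$ from \eqref{estimate_boldv}. Both routes yield the same inequality; the paper's is slightly cleaner because the dissipation is manifestly nonnegative and no error term has to be tracked, whereas yours is more elementary (no entropy is needed) at the cost of one extra estimate. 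One remark on your limiting step: you invoke ``lower semicontinuity'' of $\mf_{\ee,\alpha}$, but because of the concave term $-\me_m$ that is not automatic; what actually saves the day, as you implicitly rely on, is that all three terms of $\mf_{\ee,\alpha}$ see $\rho$ only through convolutions with the fixed smooth kernels $\widetilde\omega_\ee$, $R^{1/2}_\alpha$, so weak convergence of $\rho_{\nu_k}$ is upgraded to strong (even uniform) convergence of the mollified quantities and $\mf_{\ee,\alpha}$ is in fact \emph{continuous} along the sequence for fixed $\ee,\alpha>0$. The time-derivative estimate by $H^s$-duality is identical to the paper's Step 2.
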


\begin{proof}
    We divide the proof in several steps.

    \textit{Step 1. Dissipation of the free energy.} 
    Let $t\in(0,T)$. Let us take the problem \eqref{eq:Regularised Problem} and consider its corresponding free-energy
    \begin{equation*}
        \mathcal{F}_{\ee , \alpha , \nu} [\rho] = \nu \int_\Td \rho \log \rho + \mathcal{F}_{\ee , \alpha} [\rho].
    \end{equation*}
    Since the solution $\rho_\nu$ of \eqref{eq:Regularised Problem} is classical we can take its time derivative to obtain that
    \begin{equation*}
        \frac{\diff}{\diff t} \mathcal{F}_{\ee, \alpha, \nu} [\rho_\nu] = - \int_\Td \rho_\nu \left| \nu \nabla \log \rho_\nu - \bold{v}_{\ee, \alpha}[\rho_\nu] \right|^2.
    \end{equation*}
    From here it follows 
    \begin{equation}\label{eq:Gradient flow structure}
        \int_{0}^{t} \int_\Td \rho_\nu \left| \nu \nabla \log \rho_\nu - \bold{v}_{\ee, \alpha}[\rho_\nu] \right|^2 \leq \mf_{\ee,\alpha,\nu} [\rho_\nu(0, \cdot )] - \mf_{\ee,\alpha,\nu} [\rho_\nu(t, \cdot )] .
    \end{equation}
    Let us take a sequence $\nu_k \rightarrow 0$ and $\rho_{\nu_k}$ a sequence of solutions of the problem \eqref{eq:Regularised Problem}. From \eqref{eq:Gradient flow structure}, Fatou's Lemma and the previous convergences, the result follows.
   
    \textit{Step 2. The time derivative.} 
    Concerning the estimate on the time derivative we write (in the weak sense) 
    $$
    \partial_t \rho_{\ee,\alpha} = \dive\left(\sqrt{\rho_{\ee,\alpha}}\sqrt{\rho_{\ee,\alpha}}\bold{v}_{\ee,\alpha}\right).
    $$
    By conservation of mass we obtain $\|\sqrt{\rho_{\eps,\alpha}}\|_{L^{\infty}(0,t; L^{2}(\Td))}\le\|\rho_0\|_{L^{1}}^{\frac 12}$. Moreover $\sqrt{\rho_{\ee,\alpha}}\bold{v}_{\ee,\alpha}$ is bounded in $L^{2}((0,t)\times\Td)$ from the first estimate of this proposition. Therefore the product is bounded in $L^{2}(0,t; L^{1}(\Td))$. Taking a test function and working by duality then yields the result. 
\end{proof}

\subsection{Uniform estimates in \texorpdfstring{$\ee, \alpha$}{epsilon, alpha}}\label{sec:Uniform ee}
We discuss other several estimates concerning $\rho_{\ee, \alpha}$ uniform in $\ee$ and $\alpha$. Until now we only used the free energy of the system and its gradient flow structure. There exists another Lyapunov functional, typical for fourth order equation like the thin-film/ Cahn-Hilliard equations, which is called the entropy. We recall its definition 
\begin{equation*}
    \Phi [\rho] = \int_\Td \rho (\log \rho - 1)  \dx .
\end{equation*}
In this subsection and until the end we assume that $\widetilde\ee$ and $\ee^*$ satisfy~\eqref{def:widetilde ee}. In particular, $\frac{\ee^2}{\widetilde\ee^{d+6}}\to 0$ and $\frac{\widetilde{\ee}}{\ee^*}\to 0$  as $\ee\to 0$. 
We obtain the following estimates, that can be made rigorous in the approximating scheme and then sending the viscosity to zero. 

\begin{prop}\label{prop:More a priori bounds}
    Assume $\rho_{\ee, \alpha}$ is a solution of \eqref{eq:Nonlocal CH} constructed in Theorem~\ref{thm:Nonlocal Existence}. Assume furthermore that $\sup_{0<\ee,\alpha<1}\mathcal{F}_{\ee,\alpha} [\rho_0], \Phi [\rho_0] < \infty$. Then, there exists $C$ independent of $\ee$ such that:
    \begin{align}
        \frac{1}{2} \int_{0}^{T}\iint_{\T^d\times\T^d}  \left| S_\ee [ \nabla (\rho_{\ee, \alpha} \ast \widetilde\omega_\ee)] \right|^2  
        & \leq  C  , \label{eq:Bounds S_ee} \\
         \frac{1}{2} \int_{0}^{T}\iint_{\T^d\times\T^d}  \left| S_\ee [ (\rho_{\ee, \alpha} \ast \widetilde\omega_\ee)] \right|^2  
        & \leq  C  , \label{eq:Bounds S_ee2} \\
        \ee^* \int_{0}^{T}\int_\Td |\nabla (\rho_{\ee, \alpha} \ast  R^{\f 12}_\alpha) |^2  & \le  C , \label{eq:Bounds H1} \\
        \sup_{t\in[0,T]}\int_{\Td} \rho_{\ee, \alpha} (t, \cdot) \log \rho_{\ee, \alpha}(t, \cdot)  & \leq C. \label{eq:Linfty LlogL} 
    \end{align}
\end{prop}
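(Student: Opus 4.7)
The plan is to derive the bounds \eqref{eq:Bounds S_ee}, \eqref{eq:Bounds H1} and \eqref{eq:Linfty LlogL} simultaneously from an entropy--dissipation identity along the flow \eqref{eq:Nonlocal CH}, while \eqref{eq:Bounds S_ee2} will follow essentially for free from the estimates already collected in \Cref{lem:Gradient flow}. The entropy computation is only formal at the level of \eqref{eq:Nonlocal CH}, so I would carry it out on the viscous approximation $\rho_\nu$ of \Cref{thm:Viscosity term Existence} (smooth and, by the strong maximum principle, strictly positive for $t>0$) and then pass to the limit $\nu \to 0$ by lower semicontinuity, exactly in the spirit of the proof of \Cref{lem:Gradient flow}.

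Testing the viscous equation against $\log \rho_\nu$ gives formally
\begin{equation*}
\frac{\diff}{\dt}\Phi[\rho_\nu] + 4\nu \int_\Td |\nabla \sqrt{\rho_\nu}|^2 \, \dx = \int_\Td \nabla \rho_\nu \cdot \bold v_{\ee,\alpha}[\rho_\nu]\, \dx,
\end{equation*}
and I would decompose the right-hand side along the three pieces of $\bold v_{\ee,\alpha}$. For the fourth-order piece, using that $B_\ee$ commutes with convolution and differentiation, moving one factor $\widetilde\omega_\ee$ across the integral by symmetry of the kernel, and then invoking the integration-by-parts identity in \Cref{lem:S properties} (property~\ref{S integration by parts}) leads to
\begin{equation*}
-\int_\Td \nabla \rho_\nu \cdot \nabla B_\ee[\rho_\nu \ast \widetilde\omega_\ee \ast \widetilde\omega_\ee]\, \dx = -\iint_{\T^d \times \T^d} \bigl|S_\ee[\nabla(\rho_\nu \ast \widetilde\omega_\ee)]\bigr|^2 \, \dy\, \dx.
\end{equation*}
For the nonlocal viscosity piece, factorising $R_\alpha = R_\alpha^{1/2} \ast R_\alpha^{1/2}$ and using symmetry once more produces $-\ee^\ast\!\int_\Td|\nabla(\rho_\nu \ast R_\alpha^{1/2})|^2 \, \dx$. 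The aggregation piece is the only one with the wrong sign: for $m=2$ it collapses to $+2\!\int_\Td|\nabla(\rho_\nu \ast \widetilde\omega_\ee)|^2\, \dx$, which is however already controlled uniformly in $\nu,\ee,\alpha$, as the analogue of \eqref{eq:Uniform H1 Section 4} at the viscous level follows from the inequality $\mathcal{F}_{\ee,\alpha}[\rho_\nu(t)] \leq \mathcal{F}_{\ee,\alpha,\nu}[\rho_0]$ and \Cref{prop:Some properties}.

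Integrating in time on $(0,T)$, rearranging, and dropping the nonnegative Fisher term then yields, uniformly in $\nu$,
\begin{align*}
& \Phi[\rho_\nu(T,\cdot)] + \int_0^T\!\!\iint_{\T^d \times \T^d}\!\bigl|S_\ee[\nabla(\rho_\nu \ast \widetilde\omega_\ee)]\bigr|^2\,\dy\,\dx\,\dt + \ee^\ast\!\!\int_0^T\!\!\int_\Td\!|\nabla(\rho_\nu \ast R_\alpha^{1/2})|^2\,\dx\,\dt \\
&\qquad \leq \Phi[\rho_0] + 2\!\int_0^T\!\!\int_\Td\!|\nabla(\rho_\nu \ast \widetilde\omega_\ee)|^2\,\dx\,\dt \leq C.
\end{align*}
Since $\Phi[\rho_\nu(T,\cdot)] \geq -|\T^d| = -1$ by the elementary inequality $s(\log s - 1) \geq -1$ together with mass conservation, each nonnegative quantity on the left is individually bounded. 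Taking $\liminf_{\nu \to 0}$ and exploiting the convexity and lower semicontinuity of $\Phi$ and of the quadratic convolution functionals then gives \eqref{eq:Bounds S_ee} and \eqref{eq:Bounds H1}; writing the same identity on an arbitrary $(0,t)$ and taking the supremum in $t$ yields \eqref{eq:Linfty LlogL}. Finally, \eqref{eq:Bounds S_ee2} is immediate from the identity $\iint |S_\ee[\rho_{\ee,\alpha} \ast \widetilde\omega_\ee]|^2 = \tfrac{1}{2}\mathcal{D}_\ee[\rho_{\ee,\alpha} \ast \widetilde\omega_\ee]$ together with the $L^\infty_t$ bound \eqref{eq:Uniform H1_2 Section 4} from \Cref{lem:Gradient flow}, integrated over $(0,T)$.

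The main obstacle will be the rigorous justification of the entropy identity: $\log \rho_{\ee,\alpha}$ is not under control where mass vanishes, and $\rho_{\ee,\alpha}$ is only a bounded weak solution. Working on the viscous solution $\rho_\nu$ bypasses this, as the parabolic structure supplies strict positivity, smoothness, and the extra nonnegative Fisher-information term which may simply be discarded. The convergence $\rho_\nu \to \rho_{\ee,\alpha}$ required to close the argument is precisely the one established in \Cref{sec:Viscosity limit} with $\ee,\alpha$ fixed, so the strong convergence of the smoothed quantities $\rho_\nu \ast \widetilde\omega_\ee$ and $\rho_\nu \ast R_\alpha^{1/2}$ needed to pass the convex functionals to the limit is already in hand.
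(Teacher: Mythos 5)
Your proposal is correct and follows the same route as the paper: differentiate the entropy $\Phi$ along the flow (with the rigorous version carried out at the viscous level $\rho_\nu$ and passed to the limit $\nu\to 0$, which the paper merely remarks can be done), split the dissipation according to the three pieces of $\bold v_{\ee,\alpha}$, identify the fourth-order piece as $-\iint|S_\ee[\nabla(\rho\ast\widetilde\omega_\ee)]|^2$ via Lemma~\ref{lem:S properties}~\ref{S integration by parts} and the viscosity piece as $-\ee^\ast\int|\nabla(\rho\ast R_\alpha^{1/2})|^2$, and deduce \eqref{eq:Bounds S_ee2} directly from \eqref{eq:Uniform H1_2 Section 4}. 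The only genuine divergence is in how the sign-wrong aggregation piece is dispatched: you integrate by parts to $+2\int|\nabla(\rho\ast\widetilde\omega_\ee)|^2$ and control it outright with the $L^\infty_t H^1$ bound \eqref{eq:Uniform H1 Section 4}, whereas the paper leaves it as $-2\int(\rho\ast\widetilde\omega_\ee)\Delta(\rho\ast\widetilde\omega_\ee)$, applies Young's inequality together with the nonlocal Laplacian estimate \eqref{eq:L2 NL Laplacian}, and absorbs half of $\iint|S_\ee[\nabla(\rho\ast\widetilde\omega_\ee)]|^2$ into the left-hand side, at the cost of an extra $\ee^2/\widetilde\ee^{d+6}$ error term. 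Your variant is slightly more economical, since \eqref{eq:Uniform H1 Section 4} is already in hand from Proposition~\ref{lem:Gradient flow}; both give the same conclusion.
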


\begin{proof}[Proof of \Cref{prop:More a priori bounds}]
The following computations are formal but can be made rigorous by coming back to the viscosity system where $\nu>0$ and then passing to the limit $\nu\to 0$. For simplicity, we omit this technicality here. We compute
    \begin{align*}
        \frac{\diff}{\diff t} \Phi [\rho_{\ee, \alpha}] & = \int_\Td \log \rho_{\ee, \alpha} \frac{\partial \rho_{\ee, \alpha}}{\partial t} = - \int_\Td \log \rho_{\ee, \alpha} \dive (\rho_{\ee, \alpha} \bold v_{\ee,\alpha} [\rho_{\ee, \alpha}]) \\
        & =  \int_\Td \nabla \rho_{\ee, \alpha} \cdot \bold v_{\ee,\alpha} [\rho_{\ee, \alpha}] \\
        & = - \iint_{\T^d\times\T^d} |S_\ee [\nabla (\rho_{\ee, \alpha} \ast \widetilde\omega_\ee)]|^2 - 2 \int_\Td (\rho_{\ee, \alpha} \ast \widetilde\omega_\ee)  \Delta (\rho_{\ee, \alpha} \ast \widetilde\omega_\ee) \\
        & \quad - \ee^*\int_\Td | \nabla ( \rho_{\ee, \alpha} \ast  R^{\f 12}_{\alpha} ) |^2.
    \end{align*}
    Hence, if we take the integral in time, use Young's inequality and \eqref{eq:L2 NL Laplacian} letting $C_{GN}$ be the constant in this inequality, we have that
    \begin{align*}
         \Phi [\rho_{\ee, \alpha} (t, \cdot)] +& \int_{0}^{t}\iint_{\T^d\times\T^d}  \left| S_\ee [ \nabla (\rho_{\ee, \alpha} \ast \widetilde\omega_\ee)] \right|^2  + \ee^*\int_{0}^{t} \int_\Td | \nabla ( \rho_{\ee, \alpha} \ast  R^{\f 12}_{\alpha} ) |^2 \\
        &  \leq \Phi [\rho_0]  + \frac{1}{2 C_{GN}} \int_{0}^{t}\int_\Td | \Delta (\rho_{\ee, \alpha} \ast \widetilde\omega_\ee) |^2 + C  \int_{0}^{t}\int_\Td (\rho_{\ee, \alpha} \ast \widetilde\omega_\ee)^{2} \\
        &  \leq \Phi [\rho_0]  + \frac{1}{2} \int_{0}^{t}\iint_{\T^d\times\T^d} \left| S_\ee [ \nabla (\rho_{\ee, \alpha} \ast \widetilde\omega_\ee)] \right|^2 + C \left( \int_{0}^{t}\int_\Td (\rho_{\ee, \alpha} \ast \widetilde\omega_\ee)^{2} + \frac{\ee^2}{\widetilde\ee^{d+6}} \right).
    \end{align*}
Moreover, from  Proposition~\ref{lem:Gradient flow} we can bound 
$$
\int_{0}^{T}\int_\Td (\rho_{\ee, \alpha} \ast \widetilde\omega_\ee)^{2}\le C.
$$
From this inequality and the boundedness of the entropy initially we obtain the result. 
\end{proof}

From these estimates we can obtain compactness. In particular we find the following result.

\begin{prop}[Weak and Strong Convergences]\label{lem:Frechet-Kolmogorov}
Let $\{\rho_{\varepsilon,\alpha}\}_{\varepsilon,\alpha}$ be as above. Then, up to the extraction of (non–relabeled) subsequences, the following convergences hold:
\begin{itemize}
    \item For each fixed $\varepsilon>0$, there exists $\rho_{\ee,0}\in L^{\infty}(0,T;L^{1}(\Td))\cap L^{2}(0,T; H^{1}(\Td))$ such that as $\alpha\to 0$:
    \begin{equation*}
    \begin{split}
        &\rho_{\varepsilon,\alpha}\ \rightharpoonup\ \rho_{\varepsilon,0}
        \quad\text{weakly in }L^{1}((0,T)\times \Td),\\
        & \rho_{\varepsilon,\alpha}\ast R^{\f 12}_{\alpha}\rightharpoonup\rho_{\varepsilon,0}
        \quad\text{weakly in }L^{2}(0,T;H^{1}(\Td)),\\
        &\rho_{\varepsilon,\alpha}\ast R^{\f 12}_{\alpha}\to\rho_{\varepsilon,0}
        \quad\text{strongly in }L^{2}((0,T)\times\Td).\\
    \end{split}
    \end{equation*}
    Furthermore, $\rho_{\ee, 0}$ is a weak solution of~\eqref{eq:Nonlocal CH} in the sense of \Cref{def:Weak solution} and for all $t\in(0,T)$ it satisfies the bounds 
    \begin{equation}\label{eq:H1 bound}
        \ee^* \| \nabla \rho_{\ee , 0}  \|_{L^2 ((0,T)\times \Td)}^2 \le C, \quad \|\partial_t\rho_{\eps,0}\|_{L^{2}(0,T; H^{-s}(\Td))}\le C,\quad  \int_{\Td}\rho_{\ee,0}(t, \cdot)|\log\rho_{\ee,0}(t, \cdot)|\le C,
    \end{equation}
    where $C$ is independent of $\ee$.

    \item There exists $\rho\in L^{\infty}(0,T;L^{1}(\Td))\cap  L^{2}(0,T; H^{1}(\Td))$ such that, (up to a subsequence not relabeled) as $\ee\to 0$: 
    \begin{align}
        &\rho_{\varepsilon,0}\rightharpoonup\rho
        \quad\text{weakly in }L^{1}((0,T)\times \Td),\notag \\
        &\rho_{\varepsilon,0}\ast\widetilde\omega_{\varepsilon}\to\rho\quad\text{strongly in }L^{2}(0,T ; H^1 (\Td) ). \label{eq:strong H1 convolution}
    \end{align}
\end{itemize}

\end{prop}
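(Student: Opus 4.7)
The plan is to combine the uniform estimates from \Cref{lem:Gradient flow} and \Cref{prop:More a priori bounds} with standard compactness tools (de la Vall\'ee Poussin, Dunford--Pettis, Aubin--Lions), identify the weak and strong limits by exploiting that the relevant convolution kernels are approximate identities, and pass to the limit in the weak formulation of \eqref{eq:Nonlocal CH}. The two convergences are handled successively.

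\emph{Limit $\alpha\to 0$ with $\eps$ fixed.} I first extract weak $L^1((0,T)\times\Td)$ compactness of $\{\rho_{\eps,\alpha}\}_\alpha$: the uniform $L\log L$ bound \eqref{eq:Linfty LlogL} combined with mass conservation provides equi-integrability via de la Vall\'ee Poussin, so Dunford--Pettis applies. For $\eps$ (hence $\eps^*$) fixed, \eqref{eq:Bounds H1} bounds $\rho_{\eps,\alpha}\ast R^{\f 12}_\alpha$ uniformly in $L^2(0,T;H^1(\Td))$, while the second inequality in \eqref{eq:Grad flow inequality} gives $\partial_t\rho_{\eps,\alpha}\in L^2(0,T;H^{-s}(\Td))$ uniformly in $\alpha$; convolution with $R^{\f 12}_\alpha$ preserves this bound. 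Aubin--Lions then yields $\rho_{\eps,\alpha}\ast R^{\f 12}_\alpha\to\xi$ strongly in $L^2((0,T)\times\Td)$ along a subsequence. Testing against $\varphi\in C^\infty_c$ and writing $\int\rho_{\eps,\alpha}\ast R^{\f 12}_\alpha\,\varphi=\int\rho_{\eps,\alpha}\,(R^{\f 12}_\alpha\ast\varphi)$, the fact that $R^{\f 12}_\alpha\ast\varphi\to\varphi$ uniformly (since $\widehat{R^{\f 12}}(0)=1$) together with $\rho_{\eps,\alpha}\rightharpoonup\rho_{\eps,0}$ weakly in $L^1$ forces $\xi=\rho_{\eps,0}$.

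To show that $\rho_{\eps,0}$ solves \eqref{eq:Nonlocal CH} at $\alpha=0$, I pass term by term to the limit in \Cref{def:Weak solution}. The contributions involving $B_\eps$ and the quadratic aggregation depend on $\rho_{\eps,\alpha}$ only through fixed smooth convolutions with $\omega_\eps$ and $\widetilde\omega_\eps$, and hence pass immediately under weak $L^1$ convergence. The delicate term is $-\eps^*\int \rho_{\eps,\alpha}\nabla(\rho_{\eps,\alpha}\ast R_\alpha)\cdot\nabla\varphi$. Using $R_\alpha=R^{\f 12}_\alpha\ast R^{\f 12}_\alpha$ and self-adjointness of convolution with the symmetric kernel $R^{\f 12}_\alpha$, it rewrites as
\begin{equation*}
    -\eps^*\int_0^T\!\!\int_\Td \nabla(\rho_{\eps,\alpha}\ast R^{\f 12}_\alpha)\cdot\bigl((\rho_{\eps,\alpha}\nabla\varphi)\ast R^{\f 12}_\alpha\bigr).
\end{equation*}
The first factor converges weakly to $\nabla\rho_{\eps,0}$ in $L^2$; for the second I use the splitting
\begin{equation*}
    (\rho_{\eps,\alpha}\nabla\varphi)\ast R^{\f 12}_\alpha = (\rho_{\eps,\alpha}\ast R^{\f 12}_\alpha)\nabla\varphi + \mathrm{Com}_\alpha,
\end{equation*}
where the first piece converges strongly in $L^2$ and the commutator $\mathrm{Com}_\alpha$ vanishes in $L^2$ by a direct estimate using the Lipschitz regularity of $\nabla\varphi$ and the first-moment scaling of $R^{\f 12}_\alpha$ inherited from \Cref{def:vanishing_sequence}(v). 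The bounds \eqref{eq:H1 bound} on $\rho_{\eps,0}$ then follow by lower semi-continuity of the $L^2$, $H^1$, and convex entropy functionals, and by passing to the limit in the weak dual bound on the time derivative.

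\emph{Limit $\eps\to 0$.} Combining \eqref{eq:Bounds S_ee} with \eqref{eq:L2 NL Laplacian} from \Cref{lem:nonlocal operators} and using the scaling $\eps/\widetilde\eps^{(d+6)/2}\to 0$ from \eqref{def:widetilde ee}, the sequence $\rho_{\eps,0}\ast\widetilde\omega_\eps$ is uniformly bounded in $L^2(0,T;H^2(\Td))$. Together with the time-regularity bound from \eqref{eq:H1 bound}, Aubin--Lions yields strong convergence in $L^2(0,T;H^1(\Td))$ along a subsequence, while weak $L^1$ compactness of $\{\rho_{\eps,0}\}$ follows once more from the uniform entropy bound, and the identification of the weak and strong limits proceeds verbatim using that $\widetilde\omega_\eps$ is an approximate identity. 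The main obstacle throughout is the commutator estimate for $\mathrm{Com}_\alpha$ above: since we only control $\rho_{\eps,\alpha}$ in $L^1\cap L\log L$ and not in $L^p$ for $p>1$ uniformly in $\alpha$, the naive Young's inequality fails, and we must instead exploit the smoothness of $\nabla\varphi$ together with the vanishing first moment of $R^{\f 12}_\alpha$, which is precisely what the structural assumption in \Cref{def:vanishing_sequence}(v) encodes.
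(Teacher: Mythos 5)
Your proposal follows the same overall strategy as the paper: weak $L^{1}$ compactness from the uniform $L\log L$ bound \eqref{eq:Linfty LlogL} via Dunford--Pettis, weak $L^{2}(0,T;H^{1})$ compactness of $\rho_{\eps,\alpha}\ast R^{\frac 12}_\alpha$ from \eqref{eq:Bounds H1}, strong $L^{2}$ convergence via Aubin--Lions, and a Lions--Mas-Gallic-type splitting of the viscosity term into a main term plus a commutator. The paper itself is deliberately terse here (it states that the first part of the proposition is superseded by the quantitative Wasserstein estimate of \Cref{sec:Convergence W2}), and simply points to \eqref{eq:L-MG} and \cite{Lions_Mas-Gallic01} for the commutator; so at the level of structure your argument and the paper's coincide. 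Where your write-up diverges, and where it has a genuine gap, is in the treatment of the commutator.

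First, the attribution is wrong: item (v) of \Cref{def:vanishing_sequence} concerns the intermediate kernel $L_{\alpha,\eta}$, namely $\int|x||L_{\alpha,\eta}(x)|\dx\le C\alpha$, not a first-moment bound on $R^{\frac 12}_\alpha$. A first-moment bound on $R^{\frac 12}_\alpha$ of the form $\int|y|R^{\frac 12}_\alpha(y)\dy\le C\alpha$ is a tail-decay statement about $R^{\frac 12}$ that does not obviously follow from the Fourier-side assumptions of \Cref{def:vanishing_sequence}, and the paper does not invoke (v) for this purpose. Second, and more importantly, even granting $\||\cdot|R^{\frac 12}_\alpha\|_{L^{1}}\lesssim\alpha$, the bound
\begin{equation*}
\|\mathrm{Com}_\alpha\|_{L^{2}(\Td)}\ \lesssim\ \|D^{2}\varphi\|_{L^{\infty}}\,\bigl\||\cdot|R^{\frac 12}_\alpha\bigr\|_{L^{1}}\,\|\rho_{\eps,\alpha}\|_{L^{2}(\Td)}
\end{equation*}
does not close, precisely because---as you yourself note---there is no $L^{2}$ bound on $\rho_{\eps,\alpha}$ uniform in $\alpha$. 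The only uniformly controlled $L^{2}$ quantity is $\rho_{\eps,\alpha}\ast R^{\frac 12}_\alpha$ (from the $\me_2$ part of the free energy), and in the compactly supported model case of \eqref{eq:L-MG} this is exactly what makes the estimate work: there $|\nabla\varphi(x-y)-\nabla\varphi(x)|\le C\widetilde\ee$ uniformly on the support of $\widetilde\omega_\ee$, so the commutator is bounded pointwise by $C\widetilde\ee\,(\widetilde\omega_\ee\ast\rho)(x)$, i.e.\ by a small multiple of the controlled convolution. For the non-compactly-supported $R^{\frac 12}_\alpha$, replacing $|y|R^{\frac 12}_\alpha(y)$ by a multiple of $R^{\frac 12}_\alpha(y)$ is not immediate and requires either a tail splitting using the decay of $R^{\frac 12}$, or the chain $R_\alpha=R_\eta\ast L_{\alpha,\eta}$ together with Lemmas~\ref{lem:omega eta}--\ref{lem:Property mollifier}; stating that "$\mathrm{Com}_\alpha$ vanishes in $L^{2}$ by a direct estimate" elides this. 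You need to actually show that the commutator is dominated by $C\alpha\,(R^{\frac 12}_\alpha\ast\rho_{\eps,\alpha})$ (modulo a small tail), so that the controlled $L^{2}$ bound on $\rho_{\eps,\alpha}\ast R^{\frac 12}_\alpha$ can be used.

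Finally, two small remarks. In the $\eps\to 0$ part, your claim that $\rho_{\eps,0}\ast\widetilde\omega_\eps$ is bounded in $L^{2}(0,T;H^{2}(\Td))$ via \eqref{eq:Bounds S_ee} and \eqref{eq:L2 NL Laplacian} is correct and is indeed what the paper uses (with elliptic regularity to upgrade the Laplacian bound to the full Hessian). Also, the bounds \eqref{eq:H1 bound} are obtained by lower semi-continuity and Fatou, as you say; this matches the paper exactly.
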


\begin{proof}
Note that the first part of this prospoition is only here to mention that we can contruct solutions of the system with $\alpha=0$, as later we improve on these results by showing that there is even a rate of convergence as $\alpha\to 0$ with a different method. Therefore, for clarity we only mention the key ideas. We recall that $\widetilde\ee$ and $\ee^*$ satisfy~\eqref{def:widetilde ee}. The convergence as $\alpha\to 0$ follows from Proposition~\ref{lem:Gradient flow} and Proposition~\ref{prop:More a priori bounds}:
\begin{itemize}
    \item The weak convergence in $L^{1}((0,T)\times \Td)$ follows from the uniform bound on $\int_{\Td}\rho_{\ee,\alpha}\log \rho_{\ee,\alpha}$ by~\eqref{eq:Linfty LlogL} and the Dunford-Pettis theorem.  In fact it even yields weak star convergence in $L^{\infty}(0,T; L^{1}(\T^d))$.
    \item The weak convergence in $L^{2}(0,T;H^{1}(\T^d))$ follows from~\eqref{eq:Bounds H1} as well as the nonlocal Poincaré inequality, see for instance~\cite[Lemma C.3]{Elbar_Skrzeczkowski23}.
    \item Using~\eqref{eq:Bounds H1} as well as the estimate on the time derivative from Proposition~\ref{lem:Gradient flow} we deduce the strong convergence by Aubin-Lions Lemma.
\end{itemize}

Furthermore, $\rho_{\ee,0}$ can then be identified as a weak solution of~\eqref{eq:Nonlocal CH} with $\alpha=0$, as the previous convergences are enough to pass to the limit in the equation on $\rho_{\eps,\alpha}$. Let us note that the weak convergence on $\rho_{\eps,\alpha}$ is improved to a strong one when considering some terms of type $\omega_{\eps}$ for smooth kernels $\omega_{\eps}$. In fact, the most difficult term to treat is $\ee^*\dive(\rho_{\eps,\alpha}\nabla R_\alpha\ast\rho_{\ee,\alpha})$. However, this term can be treated exactly as in the paper of Lions and Mas-Gallic~\cite{Lions_Mas-Gallic01} which consider the DPA for the porous medium equation; see also~\eqref{eq:L-MG} where we detail a similar term as $\ee\to 0$. In the limit, this term yields
$$
    \ee^*\int \rho_{\eps,0}\nabla \rho_{\eps,0}\cdot\nabla\varphi=-\frac{\ee^*}{2}\int \rho_{\eps,0}^2\Delta\varphi
$$
in the weak formulation against a test function $\varphi\in C_{c}^{\infty}([0,T)\times \Td)$. The estimates on $\rho_{\eps,0}$ follow by lower semi-continuity of the norms and Fatou's Lemma. 

The convergence with respect to $\ee$ follows the same lines.  The most technical point is the last strong convergence of the gradient. But this last convergence can be adapted from \cite[Appendix D]{Elbar_Skrzeczkowski23}, using the nonlocal estimate of the gradient in Proposition
~\ref{prop:More a priori bounds} and the estimate on $\partial_t\rho_{\eps,0}$ presented in \Cref{lem:Gradient flow}.  
\end{proof}

\subsection{Nonlocal to local convergence. Proof of \texorpdfstring{\Cref{thm:Nonlocal to local H1}}{Proposition 1.7}}\label{sec:NL to L}

With these auxiliary results from the previous subsection we are now able to show convergence of the solutions of \eqref{eq:Nonlocal CH} to the solutions of \eqref{eq:CH}.

\begin{proof}[Proof of Proposition~\ref{thm:Nonlocal to local H1}]
    In Proposition~\ref{prop:More a priori bounds} we have proved the first part of the proposition that is the convergence as $\alpha\to 0$ as well as the compactness with respect to $\ee$. It remains to show that the limit $\rho$ is a solution of~\eqref{eq:CH} in the sense of Definition~\ref{def:Weak solution local}. 

    \textit{Step 1. Nonlocal equation.} Let us consider a test function $\varphi \in C_c^\infty ([0,T) \times \Td)$ in the weak formulation of $\rho_{\ee,0}$. The main terms to treat are:
    \begin{align*}
        &I\coloneqq\int_0^T \! \! \int_\Td \! \dive \left( \rho_{\ee, 0} \nabla \left( B_\ee [\rho_{\ee, 0} \ast \widetilde\omega_\ee \ast \widetilde\omega_\ee]  \right) \right) \varphi , \\
        &J\coloneqq- 2 \int_0^T \! \! \int_\Td \! \dive \left(  \rho_{\ee, 0} \nabla \left( \widetilde{\omega}_\ee \ast \widetilde\omega_\ee \ast \rho_{\ee, 0} \right) \right) \varphi , \\
        & \mathfrak{R}\coloneqq- \ee^*\int_0^T \int_\Td \rho_{\ee, 0}\nabla\rho_{\ee,0}\cdot\nabla\varphi.
    \end{align*}
    We now proceed to study the convergence of the diffusion and aggregation terms separately ($I$ and $J$). Finally, we will deal with the remainder term $\mathfrak{R}$. For the ease of presentation in the remaining of the proof we will use $\rho_\ee$ for $\rho_{\ee, 0}$.

    \textit{Step 2. Convergence of the diffusion term $I$.} In order to deal with the diffusive term we take advantage of the properties of the nonlocal operator $S_\ee$ introduced in \Cref{lem:S properties}. We start by integrating by parts
    \begin{align*}
        I  = \int_0^T \int \dive \left( \rho_{\ee} \nabla B_\ee [\rho_{\ee} \ast \widetilde{\omega}_\ee \ast \widetilde\omega_\ee] \right) \varphi \dx \dt  = - \int_0^T \int \widetilde\omega_\ee \ast (\rho_{\ee} \nabla \varphi ) \cdot \nabla B_\ee [\rho_{\ee} \ast \widetilde{\omega}_\ee ] \dx \dt,
    \end{align*}
    where we used also the symmetry of the kernel to apply the formula 
    $$
    \int_{\T^d} (f\ast \widetilde{\omega}_\ee)g = \int_{\T^d} f (g\ast\widetilde{\omega}_\ee).
    $$
    Moreover using once again symmetry we obtain
    \begin{align*}
        I & = - \frac{1}{2} \int_0^T \iint \frac{\omega_\ee (y)}{\ee^2}  \left( (\widetilde{\omega}_\ee \ast (\rho_{\ee} \nabla \varphi) ) (x) - (\widetilde{\omega}_\ee \ast (\rho_{\ee} \nabla \varphi) ) (x-y) \right) \\
        & \hspace{41mm} \cdot \left( \nabla (\rho_{\ee} \ast \widetilde\omega_\ee)(x) - \nabla (\rho_{\ee} \ast \widetilde\omega_\ee)(x-y) \right) \dy \dx \dt \\
        & = \frac{1}{2} (I_1 + I_2),
    \end{align*}
    where
    \begin{equation*}
        I_1 = - \int_{0}^{T}\iint \frac{\omega_{\ee}(y)}{\ee^2}\big[((\omegat\ast\rho_{\ee})\nabla\varphi)(x)-((\omegat\ast\rho_{\ee})\nabla\varphi)(x-y)\big] \cdot \big[\nabla(\rho_{\ee}\ast \omegat)(x)-\nabla(\rho_{\ee}\ast \omegat)(x-y) \big] \dy \dx \dt
    \end{equation*}
    and
    \begin{align*}
        I_2 & = - \int_{0}^{T}\iint \frac{\omega_{\ee}(y)}{\ee^2} \big[ (\omegat\ast(\rho_{\ee}\nabla\varphi))(x) - ((\omegat\ast\rho_{\ee})\nabla\varphi)(x) \\
        & \hspace{34mm} - (\omegat\ast(\rho_{\ee}\nabla\varphi))(x-y) + ((\omegat\ast\rho_{\ee})\nabla\varphi)(x-y) \big]\\
        & \hspace{30mm} \cdot \big[ \nabla(\rho_{\ee}\ast\omegat)(x)-\nabla(\rho_{\ee}\ast\omegat)(x-y) \big] \dy\dx \dt .
    \end{align*}

    \textit{Step 2a. Convergence of $I_1$.} 
    Let us start dealing with $I_1$. Due to  integration by parts 
    it follows that
    \begin{align*}
         I_1 & = - 2 \int_0^T \iint S_\ee [ \nabla (\rho_\ee \ast \widetilde\omega_\ee)] S_\ee [(\rho_\ee \ast \widetilde\omega_\ee) \nabla \varphi] \dy \dx \dt \\
        & = 2 \int_0^T \iint S_\ee [\rho_\ee \ast \widetilde\omega_\ee] S_\ee [\nabla (\rho_\ee \ast \widetilde\omega_\ee) \cdot \nabla \varphi] \dy \dx \dt + 2 \int_0^T \iint S_\ee [\rho_\ee \ast \widetilde\omega_\ee] S_\ee [ (\rho_\ee \ast \widetilde\omega_\ee ) \Delta \varphi ] \dy \dx \dt \\
        & = I_{11} + I_{12}.
    \end{align*}
    We can split $I_{11}$ into three terms to make it easier to study it. We have that
    \begin{align*}
        I_{11} & = 2 \int_0^T \iint S_\ee [\rho_\ee \ast \widetilde\omega_\ee] S_\ee [\nabla (\rho_\ee \ast \widetilde\omega_\ee)] \cdot \nabla \varphi \dy \dx \dt + 2 \int_0^T \iint S_\ee [\rho_\ee \ast \widetilde\omega_\ee] \nabla (\rho_\ee \ast \widetilde\omega_\ee) \cdot S_\ee [\nabla \varphi ] \dy \dx \dt \\
        & \quad + 2 \int_0^T \iint S_\ee [\rho_\ee \ast \widetilde\omega_\ee] \left( S_\ee [\nabla(\rho_\ee \ast \widetilde\omega_\ee) \cdot \nabla \varphi] - S_\ee [\nabla (\rho_\ee \ast \widetilde\omega_\ee)] \cdot \nabla \varphi - \nabla (\rho_\ee \ast \widetilde\omega_\ee) \cdot S_\ee [\nabla \varphi ] \right) \dy \dx \dt \\
        & = I_{11}^{(1)} + I_{11}^{(2)} + I_{11}^{(3)}.
    \end{align*}
    First, for $I_{11}^{(1)}$ we take advantage of \Cref{lem:S properties}--\ref{S convergence} in order to obtain that
    \begin{align*}
         I_{11}^{(1)} & =  \int_0^T \iint \nabla | S_\ee [ \rho_{\ee} \ast \widetilde\omega_\ee] |^2 \cdot \nabla \varphi \dy \dx \dt =- \int_0^T \iint  | S_\ee [ \rho_{\ee} \ast \widetilde\omega_\ee] |^2 \Delta \varphi \dy \dx \dt \\
        & \rightarrow -  \int_0^T \int |\nabla \rho|^2 \Delta \varphi \dx \dt,
    \end{align*}
    where we use the convergence result from \Cref{lem:Frechet-Kolmogorov}. In order to study $I_{11}^{(2)}$ we change variables and we get that
    \begin{align*}
        I_{11}^{(2)} & =  \int_0^T \iint \omega_\ee (y) \frac{(\rho_\ee \ast \widetilde\omega_\ee) (x-y) - (\rho_\ee \ast \widetilde\omega_\ee) (x) }{\ee} \nabla (\rho_\ee \ast \widetilde\omega_\ee)(x) \cdot \frac{\nabla \varphi (x - y) - \nabla \varphi (x)}{\ee} \dy \dx \dt \\
        & =  \int_\Td \omega_1 (y) \int_0^T \int_\Td \frac{(\rho_\ee \ast \widetilde\omega_\ee) (x- \ee y) - (\rho_\ee \ast \widetilde\omega_\ee) (x) }{\ee} \nabla (\rho_\ee \ast \widetilde\omega_\ee)(x) \cdot \frac{\nabla \varphi (x - \ee y) - \nabla \varphi (x)}{\ee}  \dx \dt \dy .
    \end{align*}
    From \eqref{eq:strong H1 convolution} and  \cite[Lemma A.1]{Elbar_Skrzeczkowski23} we can show that for fixed $y \in \Td$ it follows that 
    \begin{align*}
        & \int_0^T \int_\Td \frac{(\rho_\ee \ast \widetilde\omega_\ee) (x- \ee y) - (\rho_\ee \ast \widetilde\omega_\ee) (x) }{\ee} \nabla (\rho_\ee \ast \widetilde\omega_\ee)(x) \cdot \frac{\nabla \varphi (x - \ee y) - \nabla \varphi (x)}{\ee}  \dx \dt \\
        & \qquad \rightarrow \int_0^T \int_\Td \nabla \rho (x) \cdot y \nabla \rho (x) \cdot ( D^2 \varphi (x) y ) \dx \dt .
    \end{align*}
    We apply the dominated convergence theorem with respect to $y$ with the dominating function $\sup_\ee \| \nabla (\rho_\ee \ast \widetilde\omega_\ee) \|_{L^\infty(0,T; L^2(\Td))}^2 |y|^2 \| D^2 \varphi \|_{L^\infty}$ due to \eqref{eq:Uniform H1 Section 4}. Hence, thanks to the definition of $\omega_1$ and the symmetry of $D^2 \varphi$ we get that
    \begin{align*}
        I_{11}^{(2)} & \rightarrow  \int_\Td \omega_1 (y) |y|^2 \dy \int_0^T \int_\Td \nabla \rho (x) \cdot D^2 \varphi (x) \nabla \rho (x) \dx \dt \\
        & =   \int_0^T \int_\Td ( \nabla \rho (x)  \otimes  \nabla \rho (x) ) : D^2 \varphi (x) \dx \dt.
    \end{align*}
    For $I_{11}^{(3)}$ we apply \Cref{lem:S properties}--\ref{S (ii)} and we obtain that
    \begin{equation*}
        I_{11}^{(3)} = 2 \int_0^T \iint S_\ee [\rho_\ee \ast \widetilde\omega_\ee] \frac{\sqrt{\omega_\ee (y)}}{\sqrt{2} \ee} [ ( \nabla (\rho_\ee \ast \widetilde\omega_\ee) (x-y) - \nabla (\rho_\ee \ast \widetilde\omega_\ee) (x) ) \cdot ( \nabla \varphi (x-y) - \nabla \varphi (x) ) ] \dy \dx \dt.
    \end{equation*}
    Afterwards, we take advantage of Cauchy-Schwartz inequality, the uniform bound \eqref{eq:Uniform H1_2 Section 4} and a Taylor expansion and we obtain that
    \begin{align*}
        |I_{11}^{(3)} | & \leq C \int_0^T \iint \frac{\omega_\ee (y)}{\ee^2} |  \nabla (\rho_\ee \ast \widetilde\omega_\ee) (x-y) - \nabla (\rho_\ee \ast \widetilde\omega_\ee) (x)  |^2 |  \nabla \varphi (x-y) - \nabla \varphi (x) |^2 \dy \dx \dt \\
        & \leq C \ee^2 \| D^2 \varphi \|_{L^\infty}^2 \int_0^T \iint \frac{\omega_\ee (y)}{\ee^2} |  \nabla (\rho_\ee \ast \widetilde\omega_\ee) (x-y) - \nabla (\rho_\ee \ast \widetilde\omega_\ee) (x)  |^2 \dy \dx \dt \rightarrow 0.
    \end{align*}
    In order to conclude with the analysis on $I_1$ it just remains to study $I_{12}$. Analogously to $I_{11}$ we can find a decomposition into three different terms. In particular, we have that
    \begin{align*}
        I_{12} & = 2 \int_0^T \iint S_\ee [\rho_\ee \ast \widetilde\omega_\ee ]^2 \Delta \varphi \dy \dx \dt + 2 \int_0^T \iint S_\ee [\rho_\ee \ast \widetilde\omega_\ee] (\rho_\ee \ast \widetilde\omega_\ee ) S_\ee [\Delta \varphi ] \dy \dx \dt \\
        & \quad + 2 \int_0^T \iint S_\ee [\rho_\ee \ast \widetilde\omega_\ee ] \left( S_\ee [(\rho_\ee \ast \widetilde\omega_\ee) \Delta \varphi ] - S_\ee [\rho_\ee \ast \widetilde\omega_\ee ] \Delta \varphi - (\rho_\ee \ast \widetilde\omega_\ee) S_\ee [\Delta \varphi ] \right) \dy \dx \dt .
    \end{align*}
    Therefore, taking advantage of this decomposition we can work analogously as we have done for $I_{11}$ in order to obtain that
    \begin{align*}
        I_{12} \rightarrow 2 \int_0^T \int |\nabla \rho (x) |^2 \Delta \varphi \dx \dt +   \int_0^T \int \rho (x) \nabla \rho (x) \cdot \nabla \Delta \varphi (x) \dx \dt .
    \end{align*}

    \textit{Step 2b. Controlling $I_2$.}
    We expand $I_2$ in order to recover
    \begin{align*}
        I_2 & = - \int_0^T \iiint \frac{\omega_\ee(y)}{\ee^2} \widetilde{\omega}_\ee (z) \big[ \rho_{\ee} (x-z) (\nabla \varphi (x-z) - \nabla \varphi (x)) \\
        & \hspace{45mm} + \rho_{\ee} (x-y-z) ( \nabla \varphi (x-y) - \nabla \varphi (x-y-z)) \big] \\
        & \hspace{42mm} \cdot \big[ \nabla (\rho_{\ee} \ast \widetilde{\omega}_\ee ) (x) - \nabla (\rho_{\ee} \ast \widetilde{\omega}_\ee ) (x-y) \big] \dz\dy\dx\dt.
    \end{align*}
    Hence,  to study $I_2$ we take advantage of the Taylor expansion
    \begin{align*}
        \nabla\varphi(x-y)-\nabla\varphi(x-y-z) & = \nabla\varphi(x)-\nabla\varphi(x-z) - yz D^{3}\varphi(x) + O(y^2 + yz^2 ) \\
        &  = \nabla\varphi(x)-\nabla\varphi(x-z) + O(y^2 + yz ).
    \end{align*}
    Therefore, we recover that $I_2 = I_{21} + I_{22}$ with 
    \begin{align*}
        I_{21} & = - \int_0^T \iiint \frac{\omega_\ee (y)}{\ee^2} \widetilde\omega_\ee (z) \big[ (\nabla \varphi (x-z) - \nabla \varphi (x)) (\rho_{\ee} (x-z) - \rho_{\ee} (x-y-z) ) \big] \\
        & \hspace{42mm} \cdot \big[ \nabla (\rho_{\ee} \ast \widetilde\omega_\ee ) (x) - \nabla (\rho_{\ee} \ast \widetilde\omega_\ee) (x-y) \big] \dz \dy \dx \dt
    \end{align*}
    and
    \begin{align*}
        I_{22} & = - \int_0^T \iiint \frac{\omega_\ee (y)}{\ee^2} \widetilde\omega_\ee (z) \rho_{\ee} (x-y-z)   O(y^2 + yz)  \\
        & \hspace{42mm} \cdot \big[ \nabla (\rho_{\ee} \ast \widetilde\omega_\ee) (x) - \nabla (\rho_{\ee} \ast \widetilde\omega_\ee ) (x-y) \big] \dz\dy\dx\dt.
    \end{align*}
    Let us start with $I_{21}$. Taking the corresponding Taylor expansions we can rewrite it in order to obtain that
    \begin{align*}
        I_{21} = \int_0^T \iiint & \sqrt{2} S_\ee [\nabla (\rho_{\ee} \ast \widetilde\omega_\ee)] (x,y) \\
        & \cdot \frac{\sqrt{\omega_\ee (y)}}{\ee} \widetilde\omega_\ee(z) \left( -z D^2 \varphi(x) + O(z^2) \right) \int_0^1 y \cdot \nabla\rho_{\ee} (x-sy-z) \ds \dz \dy \dx \dt.
    \end{align*}
    Inside the integral $O(yz) \leq C \ee \widetilde\ee$. This observation combined with Young's  inequality implies that 
    \begin{align*}
        I_{21} & \leq C\widetilde\ee \int_0^T \iiint \left| S_\ee [\nabla (\rho_{\ee} \ast \widetilde\omega_\ee)] (x,y) \right|^2 \dz\dy\dx\dt \\
        & \quad + C\widetilde\ee \int_0^T \iint \left( \int \widetilde\omega_\ee (z) \sqrt{\omega_\ee (y)} \int_0^1 |\nabla \rho_{\ee} (x- sy -z) | \ds\dz \right)^2 \dy\dx\dt   \\
        & = I_{21}^{(1)} + I_{21}^{(2)} .
    \end{align*}
    First we notice that $I_{21}^{(1)} \leq C \widetilde\ee \rightarrow 0$ due to \eqref{eq:Bounds S_ee}. For $I_{22}^{(2)}$ let us first take advantage of a convenient change of variables. Afterwards we take advantage of \Cref{def:Mollifying sequence} for the mollifier kernel and Jensen's inequality and we obtain that
    \begin{align*}
       I_{21}^{(2)} & = C \widetilde\ee \int_0^T \iint \left( \sqrt{\omega_\ee (y)} \int \widetilde\omega_\ee (z) \int_0^1 |\nabla \rho_{\ee} (x-sy) | \ds \dz \right)^2 \dy \dx \\
       & = C \widetilde\ee \int_0^T \iint \left( \sqrt{\omega_\ee (y)}   \int_0^1 |\nabla \rho_{\ee} (x-sy) | \ds \right)^2 \dy \dx \\
       & \leq C \widetilde\ee \int_0^T \iint \omega_\ee (y)   \int_0^1 |\nabla \rho_{\ee} (x-sy) |^2 \ds   \dy \dx = C\widetilde\ee \| \nabla \rho_{\ee} \|_{L^2((0,T) \times \Td)}^2,
    \end{align*}
    and $I_{21}^{(2)} \leq C \frac{\widetilde\ee}{\ee^*} \rightarrow 0$ due to \eqref{eq:H1 bound} and the definition of $\ee^*$ in~\eqref{def:widetilde ee}. Note that this term, which was the most difficult one to treat, motivated the introductions of the parameters $\ee^*$ and $\alpha$.

    Let us now focus on $I_{22}$. Since $\ee \ll \widetilde\ee$, inside the integral $O(y^2 + yz) = O(yz) \leq C \ee \widetilde\ee$. Furthermore, if we also take advantage of Young's inequality we recover
    \begin{align*}
        I_{22} & \leq C \widetilde\ee \int_0^T \iint \sqrt{2} \left| S_\ee [\nabla (\rho_{\ee} \ast \widetilde\omega_\ee) ] (x,y) \right| \sqrt{\omega_\ee (y)} (\rho_{\ee} \ast \widetilde\omega_\ee) (x-y)  \dy\dx\dt \\
        & \leq  C \widetilde\ee \int_0^T \iint \left| S_\ee [\nabla (\rho_{\ee} \ast \widetilde\omega_\ee)] (x,y) \right|^2 \dy\dx\dt + C \widetilde\ee \int_0^T \iint \omega_\ee (y) ( \rho_{\ee} \ast \widetilde\omega_\ee)^2 (x-y) \dy\dx\dt .
    \end{align*}
    For the first term on the RHS we use \eqref{eq:Bounds S_ee}. For the second term we take  a change of variables and we use \Cref{def:Mollifying sequence} and \eqref{eq:Uniform Lp Section 4}.  Thus, it follows that
    \begin{align*}
        I_{22} & \leq C \widetilde\ee + C \widetilde\ee \int_0^T \int (\rho_{\ee} \ast \widetilde\omega_\ee)^2 (x) \int \widetilde\omega_\ee (y) \dy \dx \dt  \leq C \widetilde\ee \rightarrow 0. 
    \end{align*}

    \textit{Step 3. Convergence of the aggregation term $J$.} We study the convergence of the nonlocal term corresponding with the aggregation.
    \begin{equation}\label{eq:L-MG}
    \begin{split}
        J & = - 2 \int_0^T \int \dive \left( \rho_{\ee} \nabla \left( \widetilde{\omega}_\ee \ast \widetilde\omega_\ee \ast \rho_{\ee} \right) \right) \varphi  = 2 \int_0^T \int \widetilde\omega_\ee \ast (\rho_{\ee} \nabla \varphi ) \cdot \nabla (\widetilde\omega_\ee \ast \rho_{\ee})  \\
        & = \underbrace{2\int_0^T\int (\widetilde\omega_\ee \ast \rho_{\ee}) \nabla (\widetilde\omega_\ee \ast \rho_{\ee}) \cdot \nabla \varphi}_{=: \, J_1} + \underbrace{2 \int_0^T \int \left( \widetilde\omega_\ee \ast (\rho_{\ee} \nabla \varphi) - (\widetilde\omega_\ee \ast \rho_{\ee}) \nabla \varphi \right) \cdot \nabla ( \widetilde\omega_\ee \ast \rho_{\ee} )}_{=: \, J_2}.
        \end{split}
    \end{equation}
    Let us begin with the analysis of $J_1$. From \Cref{lem:Frechet-Kolmogorov} it follows that
    \begin{equation*}
         ( \widetilde\omega_\ee \ast \rho_{\ee})  \nabla (\widetilde\omega_\ee \ast \rho_{\ee})   \rightarrow  \rho \nabla\rho  \quad  \text{strongly in } L^{1}((0,T)\times \Td).
    \end{equation*}
    We proceed now to study the error term $J_2$,
    \begin{equation*}
        J_2 = 2 \int_0^T \iint \widetilde{\omega}_\ee (y) \rho_{\ee} (x-y) \left( \nabla \varphi (x-y) - \nabla \varphi (x) \right) \cdot \nabla ( \widetilde{\omega}_\ee \ast \rho_{\ee} ) (x) \, \diff y \diff x \diff t.
    \end{equation*}
    We recall $\widetilde{\omega}_\ee$ is supported on a ball of size $\widetilde{\ee}$. Furthermore, since $\varphi$ is smooth, it follows that by a Taylor expansion $| \nabla \varphi (x - y) - \nabla \varphi (x) | \leq C \widetilde{\ee}$. Hence, using again  \Cref{prop:Some properties}, it follows that
    \begin{align*}
        J_2 & \leq C \widetilde{\ee} \int_0^T \int \left| ( \widetilde{\omega}_\ee \ast \rho_{\ee} ) \nabla ( \widetilde\omega_\ee \ast \rho_{\ee} )   \right| \leq C \widetilde{\ee} \| \widetilde{\omega}_\ee \ast \rho_{\ee} \|_{L^{\infty}(0,T; L^{2} (\Td) )}  \| \nabla ( \widetilde{\omega}_\ee \ast \rho_{\ee} ) \|_{L^{\infty}(0,T; L^{2} (\Td) )} \\
        & \leq C \widetilde{\ee} \rightarrow 0 .
    \end{align*}

    \textit{Step 4. Convergence of the remainder $\mathfrak{R}$.}
    Let us integrate by parts and use H\"older inequality in order to recover
    \begin{equation*}
        |\mathfrak{R}| = \int_0^T \int_\Td \ee^* \rho_\ee | \nabla \rho_\ee | | \nabla \varphi | \leq \| \sqrt{\ee^*} \rho_\ee \|_{L^2((0,T)\times\Td)} \| \sqrt{\ee^*} \nabla \rho_\ee \|_{L^2((0,T)\times\Td)} \| \nabla \varphi \|_{L^\infty (0,T; L^{\infty}(\Td))}.
    \end{equation*}
    From \eqref{eq:H1 bound} we know that $\| \sqrt{\ee^*} \nabla \rho_\ee \|_{L^2 ((0,T)\times \Td)} \leq C$ uniformly in $\ee^*$. Furthermore, from the Gagliardo-Nirenberg inequality it follows that 
    \begin{equation*}
        \| \sqrt{\ee^*} \rho_\ee \|_{L^2 ((0,T)\times\Td)} \leq (\ee^*)^{1 - \frac{\theta}{2}} \| \sqrt{\ee^*} \nabla \rho_\ee \|_{L^2((0,T)\times\Td)}^\theta \| \rho_\ee \|_{L^1(\Td)}^{1-\theta}
    \end{equation*}
    for $\theta = \frac{d}{d+2}$ and in particular
    \begin{equation*}
        \| \sqrt{\ee^*} \rho_\ee \|_{L^2 (\Td)} \leq  (\ee^*)^{\frac{d + 4}{2d + 4}} C \rightarrow 0, \quad \text{ as $\ee\to 0$}.
    \end{equation*}

    \textit{Step 5. Conclusion.} 
    Combining all the steps we recover that when we take the limit $\alpha \rightarrow 0$ and $\ee \rightarrow 0$ immediately after we have that
    \begin{align*}
        & - \int_0^T \int_\Td \rho \frac{\partial \varphi}{\partial t} \dx \dt -  \int_\Td \rho_0 \varphi (0) \dx = \frac{1}{2} \int_0^T \int_\Td ( \nabla \rho (x) \otimes \nabla \rho (x) ) : D^2 \varphi (x) \dx \dt \\
        & \hspace{20mm} \quad + \frac{1}{2} \int_0^T \int_\Td |\nabla \rho (x) |^2 \Delta \varphi \dx \dt + \frac{1}{2} \int_0^T \int_\Td \rho (x) \nabla \rho (x) \cdot \nabla \Delta \varphi (x) \dx \dt \\
        & \hspace{20mm} \quad+ 2\int_0^T \rho(x) \nabla\rho (x) \cdot \nabla \varphi (x) \dx \dt  .
    \end{align*}
In fact, we have even more regularity on the solution: $\rho\in L^{\infty}(0,T; H^{1}(\T^d)) \cap L^{2}(0,T; H^{2}(\T^d))$. This follows from the uniform bounds on the free energy, ~\eqref{eq:Bounds S_ee} and~\cite[Theorem 1.2]{Ponce04}.
\end{proof}


\section{Convergence in the $2$-Wasserstein distance. Proof of \texorpdfstring{\Cref{thm:Nonlocal to local}}{Theorem 1.8}}\label{sec:Convergence W2}

In this section we want to show that we take the limit on the variables $\ee$ and $\alpha$ simultaneously. In order to do that we take advantage of the $2$-Wasserstein distance. First, in Subsection \ref{sec:Vanishing viscosity term}  we perform a commutator estimate  to deal with the vanishing viscosity term. To do that we extend the result by Amassad and Zhou in \cite{Amassad_Zhou25} to aggregation-diffusion equations. Afterwards, in Subsection \ref{sec:W2 through subsequence} we take advantage of previous results in order to conclude the proof of \Cref{thm:Nonlocal to local}. Thereby, we show convergence of $\rho_{\ee,\alpha}$, a solution of \eqref{eq:Nonlocal CH}, to a solution of \eqref{eq:CH} along a subsequence $(\ee_k, \alpha_k)$. We note that the theorem is only stated for $m=2$. Therefore, and until the end of the article, we assume $m=2$. In this section we use the notations from Definition~\ref{def:vanishing_sequence}.

\subsection{The vanishing viscosity term}\label{sec:Vanishing viscosity term}

Let us fix $\ee > 0$ and let us denote
\begin{equation*}
    \bold w [\rho] =  \left(  B_\ee [\rho \ast \widetilde\omega_\ee \ast \widetilde\omega_\ee] - 2 \,  \widetilde\omega_{\ee} \ast\widetilde\omega_\ee \ast \rho  \right).
\end{equation*}
Let us consider $0 < \eta < \alpha$, we establish the commutator estimate for the intermediate scale $\rho_\eta = R_\eta \ast \rho_\alpha$. 
We also mention here a few  auxiliary results. First, \cite[Lemma 3.2]{Amassad_Zhou25}.
\begin{lem}\label{lem:omega eta}
    Let $R$ be an admissible kernel in the sense of \Cref{def:vanishing_sequence} and $0 < \eta < \alpha$. Then, there exists $C>0$ such that for all $f \in H^{-1}(\Td)$,
    \begin{equation}\label{eq:omega eta}
        \| R_\eta \ast f \|_{L^2(\Td)} \leq C \left( \frac{\alpha}{\eta} \right)^k \|  R^{\f 12}_\alpha \ast f \|_{L^2(\Td)}. 
    \end{equation}
\end{lem}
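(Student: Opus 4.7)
My plan is to prove the estimate in the Fourier side, where both convolutions become multipliers. Since $R_\eta$ and $R^{\f 12}_\alpha$ belong to $L^1(\Rd)$ while $f$ is a periodic distribution, $R_\eta \ast f$ and $R^{\f 12}_\alpha \ast f$ have Fourier coefficients $\widehat{R}(\eta\xi)\,\hat f(\xi)$ and $\sqrt{\widehat{R}(\alpha\xi)}\,\hat f(\xi)$ respectively for $\xi \in \mathbb{Z}^d$, where I recall that $\widehat{R^{\f 12}} = (\widehat R)^{1/2}$ and $\widehat{R_\lambda}(\xi) = \widehat R(\lambda\xi)$. By Plancherel's identity on $\Td$, the claimed inequality is equivalent to the pointwise bound
\[
\widehat{R}(\eta\xi)^2 \leq C^2 \left(\tfrac{\alpha}{\eta}\right)^{2k}\widehat{R}(\alpha\xi) \qquad \text{for all } \xi \in \mathbb{Z}^d .
\]

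Setting $r = \alpha/\eta \geq 1$ and $\zeta = \eta\xi$, I reduce matters to showing $\widehat{R}(\zeta)^2 \leq C^2 r^{2k}\widehat{R}(r\zeta)$ for every $\zeta \in \Rd$. I then carry out a four-case analysis according to whether $|\zeta|$ and $|r\zeta|$ fall below or above $1$ (the case $|\zeta|>1\geq|r\zeta|$ being impossible since $r\geq 1$), using only the power-law estimates on $\widehat{R}$ from Definition~\ref{def:vanishing_sequence}. For $|\zeta|,|r\zeta|\leq 1$ the bounds $\widehat{R}(\zeta)\leq 1$ and $\widehat{R}(r\zeta)\geq 1/a$ give $\widehat{R}(\zeta)^2 \leq a\,\widehat{R}(r\zeta)$; in the intermediate regime $|\zeta|\leq 1 <|r\zeta|$ one combines $\widehat{R}(\zeta)\leq 1$ with $\widehat{R}(r\zeta)\geq a/|r\zeta|^{2k}$; finally, for $|\zeta|,|r\zeta|>1$ the upper and lower tail bounds yield $\widehat{R}(\zeta)^2 \leq b^2/|\zeta|^{2k}$ and $r^{2k}\widehat{R}(r\zeta)\geq a/|\zeta|^{2k}$, hence $\widehat{R}(\zeta)^2 \leq (b^2/a)\,r^{2k}\widehat{R}(r\zeta)$. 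Choosing $C^2 = \max(a,\,1/a,\,b^2/a)$ closes all the cases.

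No single step is genuinely hard: the argument only invokes the positivity of $\widehat{R}$ together with the two-sided estimates of Definition~\ref{def:vanishing_sequence}, and never needs the auxiliary conditions on $R^{\f 12}$ or on the intermediate kernel $L_{\alpha,\eta}$. The main conceptual point, and the reason the power $k$ in the inequality is sharp, is that the asymmetric decay $a|\xi|^{-2k}\leq\widehat{R}(\xi)\leq b|\xi|^{-k}$ is precisely what allows one to control $\widehat{R}(\zeta)$ by $r^{2k}\widehat{R}(r\zeta)$ in the high-frequency regime: the gap between the exponents $k$ and $2k$ exactly compensates the dilation factor $r^k$ in the tail. The only real subtlety in the write-up is the bookkeeping of which side of $1$ the vectors $\zeta$ and $r\zeta$ lie on, which I expect to be straightforward once the Plancherel reduction has been made.
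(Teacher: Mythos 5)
Your proof is correct. The paper itself does not prove this lemma --- it simply cites it as \cite[Lemma 3.2]{Amassad\_Zhou25} --- so there is no in-paper argument to compare against. Your Plancherel reduction to the pointwise multiplier bound $\widehat R(\zeta)^2\leq C^2 r^{2k}\widehat R(r\zeta)$ for $r=\alpha/\eta\geq 1$, followed by the three-case split according to the position of $|\zeta|$ and $|r\zeta|$ relative to $1$, is the canonical route given that all the hypotheses of Definition~\ref{def:vanishing_sequence} are phrased on the Fourier side, and every step checks out: case~(i) uses $\widehat R\leq 1$ together with $\widehat R\geq 1/a$; case~(ii) uses $\widehat R(\zeta)\leq 1$ and $r^{2k}\widehat R(r\zeta)\geq a/|\zeta|^{2k}\geq a$; case~(iii) uses $\widehat R(\zeta)^2\leq b^2/|\zeta|^{2k}$ and $r^{2k}\widehat R(r\zeta)\geq a/|\zeta|^{2k}$; and the fourth case is vacuous since $r\geq 1$. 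Your observation that the gap between the exponents $k$ and $2k$ in the tail bounds is precisely what absorbs the dilation factor is also the right structural remark --- it explains why the power $(\alpha/\eta)^k$ appears in the statement. One small point worth making explicit in a final write-up: for $f\in H^{-1}(\Td)$ both sides of the inequality are \emph{a priori} only extended reals, but the pointwise multiplier bound handles this automatically (if the right-hand side is finite, the left-hand side is forced to be finite; if it is infinite the inequality is trivial), and the assumption that $k$ be ``large enough'' in Definition~\ref{def:vanishing_sequence} guarantees that the right-hand side is in fact finite for $f\in H^{-1}(\Td)$.
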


Then, we include \cite[Lemma 3.3]{Amassad_Zhou25} as well.
\begin{lem}\label{lem:Property mollifier}
    Let $R$ be an admissible kernel in the sense of \Cref{def:vanishing_sequence} and $p \in [1, \infty)$. Then, for all nonnegative $f \in L^2(\Td)$, it follows that
    \begin{align}
        \left\| |\nabla R^{\f 12}_\alpha| \ast f \right\|_{L^2(\Td)} & \leq C \left( \frac{1}{\alpha} \right) \left\|  R^{\f 12}_\alpha \ast f \right\|_{L^2(\Td)} \label{eq:Property mollifier} 
    \end{align}
    where $C = \| h \|_{TV}$.
\end{lem}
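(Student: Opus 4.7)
\medskip

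\textbf{Plan of proof.} The strategy is to reduce the inequality to a direct consequence of the pointwise domination $|\nabla R^{\f 12}(x)| \le (R^{\f 12} * h)(x)$ from item iv of \Cref{def:vanishing_sequence}, combined with a scaling argument that produces the factor $1/\alpha$ and Minkowski's integral inequality to preserve the $L^2$ norm.

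First, I would pass from the pointwise bound on $|\nabla R^{\f 12}|$ to one on $|\nabla R^{\f 12}_\alpha|$. Since $R^{\f 12}_\alpha(x) = \alpha^{-d} R^{\f 12}(x/\alpha)$, we have $\nabla R^{\f 12}_\alpha(x) = \alpha^{-d-1}(\nabla R^{\f 12})(x/\alpha)$, so the defining inequality rescales to
\begin{equation*}
    |\nabla R^{\f 12}_\alpha(x)| \;\le\; \alpha^{-d-1}\, (R^{\f 12} * h)(x/\alpha).
\end{equation*}
Unpacking the convolution and making the change of variables $z \mapsto \alpha z$ in the $h$-integral, one obtains
\begin{equation*}
    |\nabla R^{\f 12}_\alpha(x)| \;\le\; \alpha^{-1} \int_{\Rd} R^{\f 12}_\alpha(x - \alpha z)\, dh(z),
\end{equation*}
i.e. a pointwise control of $|\nabla R^{\f 12}_\alpha|$ by a shift-average of $R^{\f 12}_\alpha$ against the measure $h$, with the explicit prefactor $\alpha^{-1}$.

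Next, I would convolve both sides with the nonnegative function $f$ (preserving the inequality thanks to $f \ge 0$) and exchange the two integrations, yielding
\begin{equation*}
    \bigl(|\nabla R^{\f 12}_\alpha| * f\bigr)(x) \;\le\; \alpha^{-1} \int_{\Rd} \bigl(R^{\f 12}_\alpha * f\bigr)(x - \alpha z)\, dh(z).
\end{equation*}
Applying Minkowski's inequality for integrals in $L^2(\Td)$ on the right-hand side, together with translation-invariance of the $L^2$ norm, gives
\begin{equation*}
    \bigl\| |\nabla R^{\f 12}_\alpha| * f\bigr\|_{L^2(\Td)} \;\le\; \alpha^{-1}\, \|h\|_{TV}\, \bigl\|R^{\f 12}_\alpha * f\bigr\|_{L^2(\Td)},
\end{equation*}
which is exactly the claimed estimate with $C = \|h\|_{TV}$.

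There is no essential obstacle here: the argument is a scaling calculation plus Minkowski's inequality. The only point deserving care is the $L^2$ norm on the torus versus $\Rd$. Since $R^{\f 12}_\alpha$ is defined on $\Rd$ but we evaluate norms on $\Td$, I would either periodise the kernel in the standard way or, equivalently, work with the periodic extension of $f$; either way the translation invariance used in the Minkowski step still applies. The nonnegativity hypothesis on $f$ is used crucially to preserve the pointwise inequality after convolution, and the finiteness of $\|h\|_{TV}$ from \Cref{def:vanishing_sequence} makes the last bound meaningful.
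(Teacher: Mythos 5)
Your proof is correct, and the paper itself does not supply an argument for this lemma: it quotes the statement verbatim from \cite{Amassad_Zhou25} (their Lemma 3.3), so there is no internal proof to compare against. Your derivation — rescale $\nabla R^{\f 12}$ to get $|\nabla R^{\f 12}_\alpha(x)| \le \alpha^{-1}\int R^{\f 12}_\alpha(x-\alpha z)\,dh(z)$, convolve with the nonnegative $f$, then apply Minkowski's integral inequality together with translation invariance of $\|\cdot\|_{L^2(\Td)}$ — is the natural and, as far as I can tell, the intended argument. One small wording quibble: you say you make ``the change of variables $z\mapsto\alpha z$ in the $h$-integral,'' but in fact the measure $h$ is untouched; you are simply rewriting $\alpha^{-d}R^{\f 12}\bigl((x-\alpha z)/\alpha\bigr) = R^{\f 12}_\alpha(x-\alpha z)$ under the integral sign. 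The resulting formula and everything that follows (in particular the crucial use of $f\ge 0$ to preserve the pointwise inequality after convolution, and of $\|h\|_{TV}<\infty$ in the Minkowski step) is correct, and the torus/whole-space issue you flag is handled exactly as you suggest.
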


Furthermore, we recall the displacement convexity on the internal energy~\cite{Santambrogio15}.
\begin{lem}[Displacement convexity]
    \label{lem:Convexity}
    The energy functional $\me_2[\rho]$ is displacement convex.
\end{lem}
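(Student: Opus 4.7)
The plan is to apply McCann's classical displacement convexity theorem for internal energies of the form $\rho \mapsto \int_\Td F(\rho)\dx$. With $F(s) = s^2$ (since $\me_2[\rho] = \int_\Td \rho^2 \dx$), McCann's criterion amounts to $s \mapsto s^d F(s^{-d}) = s^{-d}$ being convex and non-increasing on $(0,\infty)$, which is immediate. Since the computation is local and the torus transports in the standard way, the argument adapts from $\R^d$ without essential change; I will simply outline the key steps and cite~\cite{Santambrogio15} for the technicalities.

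First, I would restrict to absolutely continuous probability measures $\mu_0, \mu_1 \in \mP(\Td)$ with bounded densities. By Brenier's theorem adapted to the torus (see the discussion preceding \Cref{lem:action-minimising path}), there exists an optimal transport map $T(x) = x - \nabla\varphi(x)$ with a Kantorovich potential $\varphi$ satisfying $D^2\varphi \leq I_d$ in the distributional sense (this is the well-known bound recalled after \Cref{lem:action-minimising path}). The constant-speed geodesic interpolation is then $\mu_t := (T_t)_\# \mu_0$ with $T_t(x) := (1-t)x + tT(x)$, and $DT_t(x) = (1-t)I_d + t\,DT(x)$ is symmetric positive semidefinite a.e.

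Second, the Monge--Amp\`ere change of variables gives
\begin{equation*}
    \me_2[\mu_t] = \int_\Td \mu_t^2(y)\dy = \int_\Td \frac{\mu_0(x)^2}{\det DT_t(x)}\dx.
\end{equation*}
The Brunn--Minkowski concavity of $A \mapsto (\det A)^{1/d}$ on symmetric positive semidefinite matrices, composed with the convex decreasing map $g \mapsto g^{-d}$ on $(0,\infty)$, yields that $t \mapsto (\det DT_t(x))^{-1}$ is convex on $[0,1]$ for a.e.\ $x$. Integrating against the nonnegative weight $\mu_0(x)^2$ preserves convexity, hence $t \mapsto \me_2[\mu_t]$ is convex, which is exactly displacement convexity with $\lambda = 0$.

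The main subtlety is technical rather than conceptual: extending from absolutely continuous measures with bounded densities to general probability measures requires a standard approximation and lower-semicontinuity argument, and the pointwise meaning of $DT_t$ on the (Lebesgue-null) singular set of $\varphi$ is handled via Alexandrov's second-differentiability theorem applied locally on charts of the torus. Both steps are completely classical, which is why the authors simply invoke~\cite{Santambrogio15}.
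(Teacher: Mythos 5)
Your proof is correct and is precisely the classical McCann displacement-convexity argument (affine interpolation of the transport map, concavity of $(\det)^{1/d}$ on positive-semidefinite matrices, composition with the convex decreasing map $g\mapsto g^{-d}$, equivalently the criterion that $s\mapsto s^d F(s^{-d})=s^{-d}$ be convex and non-increasing for $F(s)=s^2$). The paper does not prove this lemma; it defers entirely to~\cite{Santambrogio15}, which contains exactly this argument, so your approach matches the one the paper implicitly relies on.
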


The next auxiliary result shows how we should take $\alpha$ with respect to $\ee$ to prove our main theorem. This proposition is an extension of the result \cite[Theorem 1.2]{Amassad_Zhou25} where we also include an aggregation term.


\begin{prop}
    [Commutator estimate]
    Take $\ee > 0$ fixed. Under the assumptions of Theorem~\ref{thm:Nonlocal to local}, take $\rho_\alpha$ a solution of \eqref{eq:Nonlocal CH} for $\alpha > 0$ and $\rho$ a solution of \eqref{eq:Nonlocal CH} for $\alpha = 0$. Then there exists $\gamma > 0$ such that if we take $\eta = \alpha^{1 + \gamma}$ and $\rho_\eta = R_\eta\ast \rho_\alpha$, there exists constants $r$, $k'$, $C > 0$, independent of $\alpha$, and $\ee$ such that
    \begin{equation*}
        \sup_{t\in[0,T]}\mathcal{W}_2^2 (\rho, \rho_\eta) (t)\le C \frac{\alpha^r}{\ee^{k'}} \exp{\left( \frac{T}{\ee^{k'}} \right)}.
    \end{equation*} 
    \label{prop:commutator estimate}
\end{prop}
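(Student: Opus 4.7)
The plan is to adapt the commutator analysis of~\cite{Amassad_Zhou25} to the present aggregation-diffusion setting, tracking how the $\ee$-dependent Lipschitz constants of the nonlocal aggregation kernel propagate through the argument. The starting point is Lemma~\ref{lem:action-minimising path}: since $R_\eta \ast \cdot$ commutes with $\dive$, the mollified density $\rho_\eta$ satisfies a continuity equation with velocity $v_\eta$ defined through $\rho_\eta\, v_\eta \coloneqq R_\eta \ast (\rho_\alpha\, \bold v_{\ee,\alpha}[\rho_\alpha])$. Decomposing $\bold v_{\ee,\alpha}[\mu] = -\nabla \bold w[\mu] - \ee^\ast \nabla (R_\alpha \ast \mu)$ and $\bold v_{\ee,0}[\mu] = -\nabla \bold w[\mu] - \ee^\ast \nabla \mu$, and denoting by $(\varphi^t,\psi^t)$ the Kantorovich potentials between $\rho(t)$ and $\rho_\eta(t)$, Lemma~\ref{lem:action-minimising path} yields
\begin{equation*}
    \frac{d}{dt}\, \tfrac{1}{2}\mathcal{W}_2^2(\rho,\rho_\eta) \,=\, \mathcal{I}^{\bold w} \,+\, \ee^\ast\, \mathcal{I}^{\mathrm{visc}},
\end{equation*}
separating the smooth aggregation contribution from the viscosity one.

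The aggregation piece $\mathcal{I}^{\bold w}$ is comparatively benign: because $\nabla \bold w$ is obtained by convolution with smooth kernels $\omega_\ee, \widetilde\omega_\ee$, one has Wasserstein-Lipschitz bounds of the form $\|\nabla \bold w[\mu] - \nabla\bold w[\nu]\|_{L^\infty} \leq C\,\ee^{-k'}\, \mathcal{W}_2(\mu,\nu)$ and $L^\infty$ control of $D^2 \bold w$ of the same order. Combined with the $L^\infty \cap BV$ estimates on $\nabla\varphi^t, \nabla\psi^t$ from Lemma~\ref{lem:action-minimising path}, and with $\mathcal{W}_2(\rho_\alpha,\rho_\eta) \leq C\eta$ (from the bounded first moment of $R_\eta$), this gives
\begin{equation*}
    |\mathcal{I}^{\bold w}| \,\leq\, \frac{C}{\ee^{k'}}\,\big(\mathcal{W}_2^2(\rho,\rho_\eta) + \eta^2\big).
\end{equation*}
The viscosity piece is the actual heart of the argument: schematically
\begin{equation*}
    \mathcal{I}^{\mathrm{visc}} \,=\, -\int_\Td \nabla\varphi^t \cdot \rho\,\nabla\rho \,-\, \int_\Td \nabla\psi^t \cdot R_\eta\ast\big(\rho_\alpha \nabla(R_\alpha\ast\rho_\alpha)\big).
\end{equation*}
Here one transcribes the strategy of~\cite{Amassad_Zhou25}: decompose $R_\alpha = R_\eta \ast L_{\alpha,\eta}$ through~\eqref{eq:1.8}, add and subtract the "Eulerian" dissipation $\int \nabla\psi^t \cdot \rho_\eta \nabla \rho_\eta$, exploit the displacement convexity of $\me_2$ (Lemma~\ref{lem:Convexity}) to absorb this term with a favorable sign, and estimate the remaining commutator residuals via Lemmas~\ref{lem:omega eta},~\ref{lem:Property mollifier} together with~\eqref{eq:1.9}. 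Choosing $\eta = \alpha^{1+\gamma}$ with $\gamma > 0$ small enough compensates the $(\alpha/\eta)^k$ factor arising from Lemma~\ref{lem:omega eta} and leaves $|\mathcal{I}^{\mathrm{visc}}| \leq C\alpha^r + (C/\ee^{k'})\,\mathcal{W}_2^2(\rho,\rho_\eta)$ for some $r>0$.

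Adding the two estimates produces a differential inequality $\tfrac{d}{dt}\mathcal{W}_2^2(\rho,\rho_\eta) \leq (C/\ee^{k'})\,\mathcal{W}_2^2(\rho,\rho_\eta) + C\alpha^r/\ee^{k'}$, and Grönwall's lemma yields the claim (the initial error $\mathcal{W}_2(\rho(0),\rho_\eta(0)) = O(\eta)$ is absorbed into the $\alpha^r$ term upon a mild decrease of $r$). The principal obstacle is the aggregation-viscosity interaction: since $\nabla \bold w$ depends nonlinearly on $\rho$, one cannot reduce to a clean gradient-flow contractivity argument as in the purely diffusive case of~\cite{Amassad_Zhou25}, so the derivative of $\mathcal{W}_2^2$ must be carried out directly and the various commutator-type residuals kept track of simultaneously. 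Fortunately, because the aggregation kernels stay smooth for every fixed $\ee > 0$, they contribute only a polynomial $\ee^{-k'}$ prefactor and do not spoil the quantitative rate in $\alpha$.
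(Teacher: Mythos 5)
Your proposal takes essentially the same route as the paper: apply Lemma~\ref{lem:action-minimising path} to differentiate $\mathcal{W}_2^2(\rho,\rho_\eta)$, treat the $\ee^\ast$-viscosity block verbatim as in Amassad--Zhou, and decompose the aggregation block by adding and subtracting the ``Eulerian'' term $\int \nabla\varphi^t\cdot\rho_\eta\nabla\bold w[\rho_\eta]$ to produce a gradient-flow-like piece bounded by $C\ee^{-k'}\mathcal{W}_2^2$ (via $\|D^2 W_\ee\|_{L^\infty}$ and the transport-map identities for Kantorovich potentials) plus commutator residuals of positive order in $\eta$, after which $\eta=\alpha^{1+\gamma}$ and Gr\"onwall close the argument. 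Two small imprecisions worth noting: the paper's commutator residuals $C^{(4)},C^{(5)},C^{(6)}$ are $O(\eta)$, not $O(\eta^2)$ as you claim, and the obstruction to sign-definiteness of the aggregation piece is lack of displacement convexity of the interaction energy rather than nonlinearity of $\nabla\bold w[\rho]$ (which is in fact linear in $\rho$), but neither affects the final conclusion.
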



\begin{proof}
    We split the proof in several steps.

    \textit{Step 0. The intermediate scale.} We notice that 
    \begin{equation*}
        \partial_t \rho_\eta = \dive \left( \frac{ R_\eta \ast (\rho_\alpha \nabla (\ee^\ast  R_\alpha \ast \rho_\alpha + \bold w [\rho_\alpha]))}{\rho_\eta} \rho_\eta \right) .
    \end{equation*}
    We can apply \Cref{lem:action-minimising path} to $\rho$ and $\rho_\eta$. Let $(\varphi^t, \psi^t)$ their corresponding Kantorovich potentials. We obtain
    \begin{align*}
        \frac{\diff}{\diff t} \left[ \frac{1}{2} \mathcal{W}_2^2 (\rho, \rho_\eta) \right] & = \int_\Td \left( \nabla\varphi^t \cdot (-  R_\eta \ast (\rho_\alpha \nabla (\ee^\ast  R_\alpha \ast \rho_\alpha + \bold w [\rho_\alpha] ) )) + \nabla\psi^t \cdot ( - \rho \nabla (\ee^\ast \rho + \bold w[\rho])) \right) \dx \\
        & = \underbrace{\ee^\ast \int_\Td \left( \nabla\varphi^t \cdot (-  R_\eta \ast (\rho_\alpha \nabla ( R_\alpha \ast \rho_\alpha ))) + \nabla\psi^t \cdot ( - \rho \nabla \rho ) \right) \dx}_{\eqqcolon \mathcal{D}_{\rho, \rho_\eta}} \\
        & \quad + \underbrace{\int_\Td \left( \nabla\varphi^t  \cdot (-  R_\eta \ast (\rho_\alpha \nabla \bold w[\rho_\alpha])) + \nabla\psi^t \cdot ( - \rho \nabla  \bold w[\rho]) \right) \dx}_{\eqqcolon \mathcal{V}_{\rho, \rho_\eta}} .
    \end{align*}

     \textit{Step 1. The viscosity diffusion term $\mathcal{D}_{\rho, \rho_\eta}$.}
    The viscosity diffusion term $\mathcal{D}_{\rho, \rho_\eta}$ is the same in~\cite{Amassad_Zhou25}. Follwing verbatim their proof, which uses Lemmas~\ref{lem:omega eta}, Lemma~\ref{lem:Property mollifier} and Lemma~\ref{lem:Convexity} we obtain  $\mathcal{D}_{\rho, \rho_\eta} = G_{\rho, \rho_\eta}^\mathcal{D} - \ee^* C_{\rho,\rho_\eta}^{1,2,3}$ where
    $$
    G_{\rho, \rho_\eta}^\mathcal{D}\le 0,\quad \left|C_{\rho,\rho_\eta}^{1,2,3}\right|\le   C  \left(\frac{\eta}{\alpha}  +  \alpha^{\frac 1 p}\left( \frac{\alpha}{\eta} \right)^{2k}\right)
    $$
    for some $p>0$.

    \textit{Step 2. The velocity term $\mathcal{V}_{\rho, \rho_\eta}$.}
    We compute in order to recover
    \begin{align*}
        &\int_\Td \nabla\varphi^t \cdot ( R_\eta \ast (\rho_\alpha \nabla \bold w [\rho_\alpha])) \dx = \int_\Td \nabla\varphi^t \cdot ( R_\eta \ast \rho_\alpha) \nabla \bold w [ \rho_\alpha ] \dx + C_{\rho, \rho_\eta}^{(4)} \\
        & \qquad  = \int_{\Td} \nabla\varphi^t \cdot \rho_\eta \nabla \bold w [\rho_\eta] \dx  + C_{\rho, \rho_\eta}^{(4)} + \underbrace{\int_\Td \nabla\varphi^t \cdot \left( \rho_\eta \nabla (B_\ee[\rho_\alpha \ast \widetilde\omega_\ee \ast \widetilde\omega_\ee] - B_\ee[\rho_\eta \ast \widetilde\omega_\ee \ast \widetilde\omega_\ee]) \right) \dx}_{\eqqcolon \, C_{\rho, \rho_\eta}^{(5)}} \\
        &\qquad \qquad \quad + \underbrace{2\int_\Td \nabla\varphi^t \cdot \left( \rho_\eta \nabla \left(  \widetilde\omega_\ee  \ast \widetilde\omega_\ee\ast  (\rho_\eta - \rho_\alpha )\right) \right) \dx}_{\eqqcolon \, C_{\rho, \rho_\eta}^{(6)}}.
    \end{align*}
    Therefore, analogously to the previous step we achieve a gradient flow structure,
    \begin{equation*}
        G_{\rho, \rho_\eta}^{\mathcal{V}} \coloneqq \int_\Td (\nabla\varphi^t \cdot (- \rho_\eta \nabla \bold w [\rho_\eta]) + \nabla\psi^t \cdot (- \rho \nabla \bold w [\rho]))
    \end{equation*}
    and $\mathcal{V}_{\rho, \rho_\eta}$ can be rewritten as 
    \begin{equation*}
        \mathcal{V}_{\rho, \rho_\eta} = G_{\rho, \rho_\eta}^{\mathcal{V}} - (C_{\rho, \rho_\eta}^{(4)} + C_{\rho, \rho_\eta}^{(5)} + C_{\rho, \rho_\eta}^{(6)} ).
    \end{equation*}
    However, the term $G_{\rho, \rho_\eta}^{\mathcal{V}}$ is not nonpositive, as the associated energy may not be displacement convex. We first focus on bounding the three commutators.

    \textit{Step 2a. Bound of $C_{\rho, \rho_\eta}^{(4)}$.} We compute in order to obtain that
    \begin{align*}
        C_{\rho, \rho_\eta}^{(4)} & \coloneqq \int_\Td \nabla\varphi^t(x) \cdot \left[  R_\eta \ast (\rho_\alpha \nabla \bold w[\rho_\alpha]) - (  R_\eta \ast \rho_\alpha) \nabla \bold w [\rho_\alpha] \right] \dx \\
        & = \iint \nabla\varphi^t (x) \cdot  R_\eta (y) \rho_\alpha (x-y) ( \nabla \bold w [\rho_\alpha] (x) - \nabla \bold w [\rho_\alpha] (x-y)) \dy \dx .
    \end{align*}
    After a Taylor expansion 
    \begin{align*}
        C_{\rho, \rho_\eta}^{(4)} & = \iint \nabla\varphi^t (x) \cdot ( R_\eta ) (y) \rho_\alpha (x-y) (D^2\bold w [\rho_\alpha] (y) y+ O(y^2)) \dy \dx .
    \end{align*}
    Hence,
    \begin{align*}
        | C_{\rho, \rho_\eta}^{(4)} | & \leq C\| \nabla\varphi^t \|_{L^\infty(\Td)} (1+\|D^2\bold w[\rho_\alpha]\|_{L^{\infty}(\Td)})\left( \int_\Td |y| ( R_\eta ) (y) \dy  \right)  \\
        & \leq C \eta,
    \end{align*}
    where $C$ depends polynomially on $\ee^{-1}$  (since $\tilde{\ee}$ and $\ee^*$ also depends on $\ee$). 
   
    \textit{Step 2b. Bound of $C_{\rho, \rho_\eta}^{(5)}$ and  $C_{\rho, \rho_\eta}^{(6)}$.} 
    We apply a Taylor expansion in order to obtain that
    \begin{align*}
        | C_{\rho, \rho_\eta}^{(5)} | & = \left| \int_\Td \nabla\varphi^t(x) \cdot \left( \rho_\eta (x) \nabla \left( B_\ee [ R_\eta \ast \rho_\alpha \ast \widetilde\omega_\ee \ast \widetilde\omega_\ee] - B_\ee [\rho_\alpha \ast \widetilde\omega_\ee \ast \widetilde\omega_\ee] \right) \right) \dx \right| \\
        & = \left| \int_\Td \nabla\varphi^t(x) \cdot \left( \rho_\eta (x)  \left(  R_\eta \ast B_\ee [\rho_\alpha \ast \widetilde\omega_\ee \ast \nabla \widetilde\omega_\ee] - B_\ee [\rho_\alpha \ast \widetilde\omega_\ee \ast \nabla \widetilde\omega_\ee] \right) \right) \dx \right|\\
        & = \left|\iint \nabla\varphi^t (x) \cdot \left( \rho_\eta (x)  R_\eta (y) \left( B_\ee [\rho_\alpha \ast \widetilde\omega_\ee \ast \nabla \widetilde\omega_\ee] (x-y) - B_\ee [\rho_\alpha \ast \widetilde\omega_\ee \ast \nabla \widetilde\omega_\ee](x) \right) \right) \dx \right|.
    \end{align*}
    We proceed in the same way we did for \textit{Step 2a} and we obtain that
    \begin{equation*}
        |C_{\rho, \rho_\eta}^{(5)}| \leq C \eta 
    \end{equation*}
    for some $C$ depending polynomially on $\ee^{-1}$.
    The proof for $C_{\rho, \rho_\eta}^{(6)}$ is the same  we obtain that
    \begin{equation*}
        |C_{\rho, \rho_\eta}^{(6)}| \leq C   \eta.
    \end{equation*}

    \textit{Step 3. Gradient flow structure.} 
    It only remains to treat the term
   \begin{equation*}
        G_{\rho, \rho_\eta}^{\mathcal{V}} \coloneqq \int_\Td (\nabla\varphi^t \cdot (- \rho_\eta \nabla \bold w [\rho_\eta]) + \nabla\psi^t \cdot (- \rho \nabla \bold w [\rho])).
    \end{equation*} 
    By definition of $\bold w$ we note that it is similar to treat
     \begin{equation*}
        G =  \int_\Td (\nabla\varphi^t \cdot (- \rho_\eta \nabla W_{\ee}\ast \rho_\eta ) + \nabla\psi^t \cdot (- \rho \nabla W_\ee\ast \rho))
    \end{equation*}  
    where $W_\ee$ is a smooth mollifier depending on the parameter $\ee$. By properties of the Kantorovich potentials we have $\nabla\varphi^t(x) = x- T^t(x)$, $\nabla\psi^t(x) = x - S^t(x)$ where $T^t$, $S^t$ represent the optimal transport maps from $\rho_\eta(t)$ to $\rho(t)$ and $\rho(t)$ to $\rho_\eta(t)$ respectively. Thus $T_t\sharp \rho_\eta=\rho$ and  $\nabla\psi^t\circ T = -\nabla\varphi^t$ since $S\circ T = id$.

    We obtain 
    \begin{align*}
    \int_{\T^d}\nabla\psi^t\cdot (- \rho \nabla W_\ee\ast \rho)) &= \int_{\T^d}\nabla\varphi^t\rho_\eta(\nabla W_\ee\ast\rho)\circ T^t\\
    &= \int_{\T^d}\int_{\T^d}\nabla\varphi^t(x)\rho_{\eta}(x)\nabla W_\ee (T^t(x)-y)\rho(y)\diff x \diff y\\
    &= \int_{\T^d}\int_{\T^d}\nabla\varphi^t(x)\rho_{\eta}(x)\nabla W_\ee (T^t(x)-T^t(y))\rho_\eta(y)\diff x \diff y.
    \end{align*}
    Therefore 
    $$
    G = \int_{\T^d}\int_{\T^d}\nabla\varphi^t(x)\rho_{\eta}(x)\left[\nabla W_\ee (T^t(x)-T^t(y))-\nabla W_\ee(x-y)\right]\rho_\eta(y)\diff x \diff y.
    $$
    Since $\|D^2 W_\ee\|_{L^{\infty}}\le C$ for some $C$ depending polynomially on $\ee^{-1}$. We obtain by definitions of the Kantorovich potential and the Wasserstein distance, see Lemma~\ref{lem:action-minimising path}, and the Jensen's inequality: 
    \begin{align*}
    |G| &\le C\int_{\Td}\int_{\T^d}|\nabla\varphi^t(x)|\rho_{\eta}(x)\rho_\eta (y)\left[|\nabla\varphi^t(x)| + |\nabla\varphi^t(y)|\right]\\
    &\le C\mathcal{W}_2^2(\rho(t),\rho_\eta(t)).
    \end{align*} 
    Hence, we summarise all the bounds and we have that
    \begin{align*}
        \frac{1}{2} \mathcal{W}_2^2 (\rho, \rho_\eta) (t) - \frac{1}{2} \mathcal{W}_2^2 (\rho, \rho_\eta) (0) \leq \int_0^t - \ee^\ast  C_{\rho, \rho_\eta}^{(1,2,3)}  - ( C_{\rho, \rho_\eta}^{(4)} + C_{\rho, \rho_\eta}^{(5)} + C_{\rho, \rho_\eta}^{(6)}) \diff \tau + C\int_{0}^{t}W_{2}^{2}(\rho,\rho_\eta)(\tau)\diff \tau .
    \end{align*}
     Let us choose $\eta = \alpha^{1+\gamma}$ with $0 < \gamma < \frac{1}{2pk}$. Then, we find that there exists $r > 0$  and $C$ depending polynomially on $\ee^{-1}$ such that
    \begin{equation*}
        \frac{1}{2} \mathcal{W}_2^2 (\rho, \rho_\eta) (t) - \frac{1}{2} \mathcal{W}_2^2 (\rho, \rho_\eta) (0) \leq C \alpha^r + C\int_{0}^{t}W_{2}^{2}(\rho,\rho_\eta)(\tau)\diff \tau.
    \end{equation*}
    By Gronwall's lemma we obtain that there exists $k'>0$ and $C$ independent of $\ee$ and $\alpha$ such that for all $t\in[0,T]$ we have that
    $$
    \mathcal{W}_2^2 (\rho, \rho_\eta) (t)-  \mathcal{W}_2^2 (\rho, \rho_\eta) (0) \le C \frac{\alpha^r}{\ee^{k'}}\exp{\left( \frac{T}{\ee^{k'}} \right)} .
    $$
\end{proof}

From the result described in \Cref{prop:commutator estimate} we can also recover the following convergence result that connects the nonlocal problem with and without viscosity term. 
\begin{cor}\label{cor:wass_conv}
  There exists constants $r$, $k'$, $C > 0$, independent of $\alpha$, and $\ee$ such that
    \begin{equation*}
        \sup_{t\in[0,T]}\mathcal{W}_2 (\rho, \rho_\alpha) (t)\le C \frac{\alpha^{\frac r 2}}{\ee^{\frac {k'}{2}}}\exp{\left( \frac{T}{2\ee^{k'}}\right)}
    \end{equation*}
    where $\rho_\alpha$ is a solution of \eqref{eq:Nonlocal CH} for $\alpha > 0$ and $\rho$ is a solution of \eqref{eq:Nonlocal CH} for $\alpha = 0$. 
\end{cor}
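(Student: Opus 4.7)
The strategy is a triangle inequality argument based on the intermediate density $\rho_\eta = R_\eta \ast \rho_\alpha$ already appearing in the statement of \Cref{prop:commutator estimate}. Writing
\begin{equation*}
    \mathcal{W}_2(\rho, \rho_\alpha)(t) \le \mathcal{W}_2(\rho, \rho_\eta)(t) + \mathcal{W}_2(\rho_\eta, \rho_\alpha)(t),
\end{equation*}
the first summand is directly handled by \Cref{prop:commutator estimate}, which yields, after taking square roots and with the choice $\eta = \alpha^{1+\gamma}$,
\begin{equation*}
    \sup_{t\in[0,T]} \mathcal{W}_2(\rho, \rho_\eta)(t) \le C \, \frac{\alpha^{r/2}}{\ee^{k'/2}} \exp\!\left(\frac{T}{2\ee^{k'}}\right).
\end{equation*}

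For the second summand, the idea is that the map $\mu \mapsto R_\eta \ast \mu$ only displaces mass by an amount controlled by the first (or second) moment of $R_\eta$. More precisely, if $X$ has law $\rho_\alpha(t,\cdot)$ and $Z$ is independent of $X$ with law $R_\eta$, then $X+Z$ has law $R_\eta \ast \rho_\alpha(t,\cdot) = \rho_\eta(t,\cdot)$, so the joint law of $(X, X+Z)$ is an admissible transport plan between $\rho_\alpha(t,\cdot)$ and $\rho_\eta(t,\cdot)$. Using the torus cost $c_2(x,y) \le |x-y|^2$, we get
\begin{equation*}
    \mathcal{W}_2^2(\rho_\eta, \rho_\alpha)(t) \le \int_{\R^d} |z|^2 \, R_\eta(z)\,dz = \eta^2 \int_{\R^d} |z|^2 \, R(z)\,dz \le C \eta^2,
\end{equation*}
which is finite by \Cref{def:vanishing_sequence}(i). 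Therefore $\mathcal{W}_2(\rho_\eta, \rho_\alpha)(t) \le C\eta = C\alpha^{1+\gamma}$.

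Combining the two estimates gives
\begin{equation*}
    \sup_{t\in[0,T]} \mathcal{W}_2(\rho, \rho_\alpha)(t) \le C \, \frac{\alpha^{r/2}}{\ee^{k'/2}} \exp\!\left(\frac{T}{2\ee^{k'}}\right) + C \alpha^{1+\gamma}.
\end{equation*}
Since the exponential/algebraic prefactor in the first term is bounded below by a constant for $\ee$ small, the polynomial term $\alpha^{1+\gamma}$ can be absorbed into the first by replacing the exponent $r$ by $\min(r, 2(1+\gamma)) > 0$ and adjusting the constant. This yields the stated estimate. There is no essential obstacle here: the hard work was already done in \Cref{prop:commutator estimate}, and this corollary is a bookkeeping step whose only subtle point is the elementary second-moment bound on $\mathcal{W}_2(\rho_\eta, \rho_\alpha)$, valid on the torus because $c_2 \le |\cdot|^2$.
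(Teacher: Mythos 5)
Your proof is correct and takes essentially the same route as the paper: triangle inequality through the intermediate density $\rho_\eta = R_\eta\ast\rho_\alpha$, Proposition~\ref{prop:commutator estimate} for $\mathcal{W}_2(\rho,\rho_\eta)$, and the coupling $\pi(x,y)=R_\eta(x-y)\rho_\alpha(y)$ (your $(X,X+Z)$) together with the second-moment bound $\int|z|^2 R_\eta\,\dz \le C\eta^2$ for $\mathcal{W}_2(\rho_\eta,\rho_\alpha)\le C\eta$. The only cosmetic difference is that the paper additionally tracks the initial-time term $\mathcal{W}_2(\rho,\rho_\eta)(0)$ (reducible to $\mathcal{W}_2(\rho_\alpha,\rho_\eta)(0)\le C\eta$ since $\rho(0)=\rho_\alpha(0)$), whereas you invoke the proposition's $\sup_t$ bound directly; both then absorb the $O(\eta)$ term into the dominant exponential factor.
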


\begin{proof}
    By the triangle inequality and \Cref{prop:commutator estimate}
    \begin{align*}
        \mathcal{W}_2 (\rho, \rho_\alpha) (t) & \leq \mathcal{W}_2 (\rho, \rho_\eta) (t) + \mathcal{W}_2 (\rho_\eta, \rho_\alpha) (t) \\
        & \leq \mathcal{W}_2 (\rho, \rho_\eta) (0) +\frac{\alpha^{\frac r 2}}{\ee^{\frac {k'}{2}}} \exp{\left( \frac{T}{2\ee^{k'}}\right)} + \mathcal{W}_2 (\rho_\eta, \rho_\alpha) (t).
    \end{align*}
    Note that in particular $\rho (0) = \rho_\alpha (0)$. Therefore, we have that
    \begin{equation*}
        \mathcal{W}_2 (\rho, \rho_\alpha) (t) \leq \frac{\alpha^{\frac r 2}}{\ee^{\frac {k'}{2}}}\exp{\left( \frac{T}{2\ee^{k'}}\right)} + \mathcal{W}_2 (\rho_\eta, \rho_\alpha) (0) + \mathcal{W}_2 (\rho_\eta, \rho_\alpha) (t).
    \end{equation*}
    Let us now recall that $\rho_\eta =  R_\eta \ast \rho_\alpha$. We define 
    \begin{equation*}
        \pi (x,y) =  R_\eta (x-y) \rho_\alpha (y).
    \end{equation*}
    Therefore, it is easy to verify that
    \begin{equation*}
        \int_\Td \pi (x,y) \dx = \rho_\alpha (y), \qquad \int_\Td \pi (x,y) \dy = \rho_\eta (x)
    \end{equation*}
    and hence $\pi$ is a transport plan between $\rho_\eta$ and $\rho_\alpha$. Thus, we have that
    \begin{equation*}
        \int_\Td |x-y|^2 \pi(x,y) \dx \dy = \int_\Td |x|^2  R_\eta (x) \dx \int_\Td \rho_\alpha (y) \dy \leq C \eta^2.
    \end{equation*}
    Therefore, it follows that, up to changing $\eta$,
    \begin{equation*}
        \mathcal{W}_2 (\rho_\eta , \rho_\alpha ) \leq C \eta \ll \frac{\alpha^{\frac r 2}}{\ee^{\frac {k'}{2}}} \exp{\left( \frac{T}{2\ee^{k'}}\right)},
    \end{equation*}
    from where we recover the desired result.
\end{proof}

\subsection{The limit along a subsequence}\label{sec:W2 through subsequence}

First from~\Cref{thm:Nonlocal to local H1} we deduce the following lemma:

\begin{lem}
There exists a subsequence $\ee_k$ such that for all $\varphi\in L^\infty((0,T)\times\T^d)$:
$$
\int_{0}^{T}\int_{\T^d}\rho_{\ee_k,0}\, \varphi \to \int_{0}^{T}\int_{\T^d}\rho\, \varphi,
$$
where $\rho$ is a weak solution of~\eqref{eq:CH}. 
\end{lem}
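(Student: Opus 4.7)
The plan is to read this lemma as a direct corollary of Proposition~\ref{thm:Nonlocal to local H1}. That proposition already produces a subsequence $\ee_k \to 0$ along which $\rho_{\ee_k,0} \rightharpoonup \rho$ weakly in $L^1((0,T)\times\Td)$, and identifies the limit $\rho$ as a weak solution of~\eqref{eq:CH} in the sense of Definition~\ref{def:Weak solution local}. Thus the only remaining task is to rephrase the weak $L^1$ convergence in the language of pairings with $L^\infty$ test functions.

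The translation is immediate: since $(0,T)\times\Td$ is $\sigma$-finite, the Riesz representation theorem identifies the dual as $(L^1((0,T)\times\Td))^\ast = L^\infty((0,T)\times\Td)$, and the weak topology on $L^1$ is by definition the one generated by pairings with elements $\varphi \in L^\infty$. Consequently
\begin{equation*}
    \int_0^T \int_{\Td} \rho_{\ee_k,0}\,\varphi \;\longrightarrow\; \int_0^T \int_{\Td} \rho\,\varphi, \qquad \text{for every } \varphi \in L^\infty((0,T)\times\Td),
\end{equation*}
which is exactly what is claimed.

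No serious obstacle appears: the content is simply that the weak compactness in $L^1$ recovered in Proposition~\ref{thm:Nonlocal to local H1} (via the equi-integrability furnished by the uniform entropy bound~\eqref{eq:Linfty LlogL} combined with the Dunford--Pettis theorem) is of precisely the form needed to pair with bounded test functions. This is what will be used in Subsection~\ref{sec:W2 through subsequence}, where the lemma is combined with the quantitative Wasserstein estimate of Corollary~\ref{cor:wass_conv} controlling $\mathcal{W}_2(\rho_{\ee_k,\alpha_k},\rho_{\ee_k,0})$, to handle a diagonal subsequence $\rho_{\ee_k,\alpha_k}$ and conclude the proof of Theorem~\ref{thm:Nonlocal to local}.
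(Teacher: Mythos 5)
Your proposal is correct and mirrors the paper exactly: the paper deduces this lemma directly from Proposition~\ref{thm:Nonlocal to local H1}, whose conclusion $\rho_{\ee_k,0} \rightharpoonup \rho$ weakly in $L^1((0,T)\times\Td)$ (with $\rho$ a weak solution of~\eqref{eq:CH}) is, by the duality $(L^1)^\ast = L^\infty$, precisely the stated convergence of pairings with $L^\infty$ test functions.
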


Let us now introduce the following auxiliary result which can be easily proved by the Kantorovich-Rubinstein duality of the Wasserstein distance.

\begin{lem}\label{lem:W_2_Kanto_Rubin}
Let $\varphi\in L^\infty(0,T; \text{Lip} (\T^d))$ and $f,g$ smooth enough. Then 
$$
\int_{0}^{T}\int_{\T^d}\varphi(f-g)\le C \int_{0}^{T} \mathcal{W}_2(f(t),g(t))\diff t
$$
where $C$ depends only on $\|\varphi\|_{L^{\infty}(0,T; Lip(\T^d))}$.
\end{lem}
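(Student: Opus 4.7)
The plan is to prove this as a direct consequence of the Kantorovich--Rubinstein duality together with the inequality $\mathcal{W}_1 \leq \mathcal{W}_2$, working pointwise in time and then integrating. Since $f, g$ are assumed smooth enough and in the context of the paper they have the same total mass (both are probability densities), the Wasserstein distances are well-defined on each time slice.

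First, I would fix $t \in (0,T)$ and recall the Kantorovich--Rubinstein duality on the torus: for any two probability measures $\mu, \nu \in \mathcal{P}(\T^d)$,
\begin{equation*}
    \mathcal{W}_1(\mu, \nu) = \sup\left\{ \int_{\T^d} \phi \, d(\mu - \nu) \,:\, \phi \in \text{Lip}(\T^d), \; \text{Lip}(\phi) \leq 1 \right\}.
\end{equation*}
Applying this to the (normalized) function $\varphi(t, \cdot) / \|\varphi(t,\cdot)\|_{\text{Lip}(\T^d)}$, with $\mu = f(t, \cdot)$ and $\nu = g(t, \cdot)$, yields
\begin{equation*}
    \int_{\T^d} \varphi(t,x) (f(t,x) - g(t,x)) \, dx \leq \|\varphi(t,\cdot)\|_{\text{Lip}(\T^d)} \, \mathcal{W}_1(f(t,\cdot), g(t,\cdot)).
\end{equation*}

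Next, I would use the standard comparison $\mathcal{W}_1 \leq \mathcal{W}_2$, which follows by applying the Cauchy--Schwarz inequality to any optimal transport plan $\gamma$ for $\mathcal{W}_2$: since the cost $c_2(x,y)$ on the torus satisfies $\sqrt{c_2(x,y)} = d_{\T^d}(x,y)$,
\begin{equation*}
    \int_{\T^d \times \T^d} d_{\T^d}(x,y) \, d\gamma(x,y) \leq \left( \int_{\T^d \times \T^d} c_2(x,y) \, d\gamma(x,y) \right)^{1/2} = \mathcal{W}_2(f(t,\cdot), g(t,\cdot)).
\end{equation*}

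Finally, I would integrate in $t$ and bound the time-dependent Lipschitz norm by its supremum:
\begin{equation*}
    \int_0^T \int_{\T^d} \varphi (f - g) \, dx \, dt \leq \int_0^T \|\varphi(t,\cdot)\|_{\text{Lip}(\T^d)} \, \mathcal{W}_2(f(t,\cdot), g(t,\cdot)) \, dt \leq C \int_0^T \mathcal{W}_2(f(t,\cdot), g(t,\cdot)) \, dt,
\end{equation*}
with $C = \|\varphi\|_{L^\infty(0,T; \text{Lip}(\T^d))}$. There is no serious obstacle here: the only minor point is to ensure that the Lipschitz constant of $\varphi(t, \cdot)$ is measured with respect to the torus distance $d_{\T^d}$, which matches the cost $c_2$ appearing in the definition of $\mathcal{W}_2$ on $\T^d$.
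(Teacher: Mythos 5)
Your proposal is correct and follows exactly the route the paper has in mind: the authors explicitly state the lemma ``can be easily proved by the Kantorovich--Rubinstein duality of the Wasserstein distance'' and give no further proof, and your argument (Kantorovich--Rubinstein duality pointwise in time, $\mathcal{W}_1 \leq \mathcal{W}_2$ by Cauchy--Schwarz, then integrate) is precisely the standard way to fill that in. The only implicit hypothesis, which you correctly flag, is that $f(t,\cdot)$ and $g(t,\cdot)$ are probability densities of equal mass so the Wasserstein distances and the duality formula apply.
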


Now we are ready to prove the main result of this section.

\begin{proof}[Proof of \Cref{thm:Nonlocal to local}]
   From the triangular's inequality and Lemma~\ref{lem:W_2_Kanto_Rubin} we get that  for all $\varphi\in L^\infty(0,T; \text{Lip}(\T^d))$,
    \begin{equation*}
         \int_{0}^{T}\int_{\T^d}(\rho_{\eps_k,\alpha_k}-\rho)\, \varphi \le C\int_{0}^{T} \mathcal{W}_2(\rho_{\eps_k,\alpha_k}(t), \rho_{\ee_k,0}(t)) + \int_{0}^{T}\int_{\T^d}(\rho_{\ee_k,0}-\rho)\varphi.
    \end{equation*}
    From \Cref{cor:wass_conv} we have that
    \begin{equation*}
        \mathcal{W}_2 (\rho_{\ee_k , \alpha_k}, \rho_{\ee_k, 0} )\leq C \frac{\alpha_k^{\frac r 2}}{\ee_k^{\frac {k'}{2}}} \exp{\left( \frac{T}{2\ee_k^{k'}}\right)}  .
    \end{equation*}
    Therefore, it follows that
    \begin{equation*}
         \int_{0}^{T}\int_{\T^d}(\rho_{\eps_k,\alpha_k}-\rho)\, \varphi \le C \frac{\alpha_k^{\frac r 2}}{\ee_k^{\frac {k'}{2}}} \exp{\left( \frac{T}{2\ee_k^{k'}}\right)}   + \int_{0}^{T}\int_{\T^d}(\rho_{\ee_k,0}-\rho)\varphi.
    \end{equation*}    
    Hence, if we choose the ratio between $\alpha_k$ and $\ee_k$ adequately and for the first term on the RHS the result follows. The convergence can then be upgraded to a narrow convergence by convergence of the masses. 
\end{proof}




\section{Convexity and particle approximation. Proof of \texorpdfstring{\Cref{thm:Particle approximation}}{Thoerem 1.10}}\label{sec:Convexity}

In this section we provide a deterministic particle approximation in view of the nonlocal approximation \eqref{eq:Nonlocal CH} extending the so-called \textit{blob method}~\cite{Craig_Elamvazhuthi_Haberland_Turanova23}. We need to prove that the energy functional $\mathcal{F}_{\ee,\alpha}$ is $\lambda$ convex. Instead of using the \textit{above the tangent inequality}, \cite{Craig17}, we use Lemma~\ref{lem:lambda_convex_interaction} in order to obtain $\lambda$ convexity of our functional. 

\begin{prop}
The functional $\mathcal{F}_{\ee,\alpha}$ is $\lambda_{\ee,\alpha}$-geodesically convex with
$$
    \lambda_{\ee,\alpha} \simeq -\left(\ee^{-2} \widetilde\ee^{-d-2} + \widetilde\ee^{-d-2} + \ee^\ast \alpha^{-d-2}\right).
$$
for some $C>0$ and independenf of $\ee,\alpha$.
\end{prop}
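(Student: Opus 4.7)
The plan is to rewrite $\mathcal{F}_{\ee,\alpha}$ as a pure interaction energy of the form $\tfrac{1}{2}\iint_{\T^d\times\T^d} W(x-y)\,\rho(x)\,\rho(y)\,\dx\dy$ with a single symmetric kernel $W$, and then invoke \Cref{lem:lambda_convex_interaction} to read off the $\lambda$-geodesic convexity directly from a pointwise lower bound on $D^2 W$.

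For the first step, I use repeatedly the identity $\int (\rho\ast\varphi)(\rho\ast\psi) = \iint \rho(y)\rho(z)(\varphi\ast\psi)(z-y)\,\dy\dz$, valid whenever $\varphi$ and $\psi$ are symmetric. This recasts each of the three summands of $\mathcal{F}_{\ee,\alpha}$ as an interaction energy. Specifically, expanding the square yields
\[
\tfrac14 \mathcal{D}_\ee[\rho\ast\widetilde\omega_\ee] \;=\; \tfrac{1}{2\ee^2}\int (\rho\ast\widetilde\omega_\ee)\bigl(\rho\ast\widetilde\omega_\ee - \rho\ast\widetilde\omega_\ee\ast\omega_\ee\bigr)\,\dx,
\]
which corresponds to the kernel $W_1 = \ee^{-2}\bigl(\widetilde\omega_\ee\ast\widetilde\omega_\ee - \widetilde\omega_\ee\ast\widetilde\omega_\ee\ast\omega_\ee\bigr)$. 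Similarly, $-\mathcal{E}_2[\rho\ast\widetilde\omega_\ee]$ gives $W_2 = -2\,\widetilde\omega_\ee\ast\widetilde\omega_\ee$, and $\tfrac{\ee^\ast}{2}\mathcal{E}_2[\rho\ast R^{\f12}_\alpha]$ gives $W_3 = \ee^\ast R_\alpha$ (using $R_\alpha = R^{\f12}_\alpha\ast R^{\f12}_\alpha$). The sum $W = W_1+W_2+W_3$ is symmetric: $\omega_\ee$ and $\widetilde\omega_\ee$ are symmetric by \Cref{def:Mollifying sequence}, and $R_\alpha$ is symmetric because $\widehat R$ is real and positive by \Cref{def:vanishing_sequence}.

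For the second step, since $W(-x)=W(x)$ and the energy is now exactly in the interaction form of \Cref{lem:lambda_convex_interaction}, it suffices to exhibit $\lambda_{\ee,\alpha}\in\R$ with $D^2 W \geq \lambda_{\ee,\alpha}\, I_d$ pointwise, for which I take $\lambda_{\ee,\alpha} = -\|D^2 W\|_{L^\infty(\R^d)}$. I will bound each summand separately via $\|D^2(f\ast g)\|_{L^\infty} \leq \|f\|_{L^1}\|D^2 g\|_{L^\infty}$ combined with the mollifier scalings $\|D^2\widetilde\omega_\ee\|_{L^\infty} \leq C\,\widetilde\ee^{-d-2}$ and $\|D^2 R_\alpha\|_{L^\infty} \leq C\,\alpha^{-d-2}$ (which follow from $\widetilde\omega_\ee(x) = \widetilde\ee^{-d}\omega_1(x/\widetilde\ee)$ and the analogous rescaling for $R_\alpha$). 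Using $\|\omega_\ee\|_{L^1}=\|\widetilde\omega_\ee\|_{L^1}=1$, the triangle inequality applied to $W_1$ gives $\|D^2 W_1\|_\infty \lesssim \ee^{-2}\widetilde\ee^{-d-2}$; trivially $\|D^2 W_2\|_\infty \lesssim \widetilde\ee^{-d-2}$ and $\|D^2 W_3\|_\infty \lesssim \ee^\ast \alpha^{-d-2}$. Summing produces precisely the claimed $\lambda_{\ee,\alpha} \simeq -\bigl(\ee^{-2}\widetilde\ee^{-d-2} + \widetilde\ee^{-d-2} + \ee^\ast \alpha^{-d-2}\bigr)$.

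The argument is essentially bookkeeping once the interaction-energy structure is exposed, and I do not anticipate any genuine obstacle; the only mild subtlety is checking symmetry of the three kernels so that \Cref{lem:lambda_convex_interaction} applies directly. It is worth emphasising that one cannot hope to improve the exponent $\ee^{-2}\widetilde\ee^{-d-2}$ coming from $\mathcal{D}_\ee$ at this purely algebraic level: exploiting the formal cancellation $B_\ee\sim -\Delta$ to absorb one factor of $\ee^{-2}$ would require additional regularity on $\rho$ that is unavailable when estimating $W$ uniformly on $\R^d$, and this is precisely why the resulting $\lambda_{\ee,\alpha}$ blows up as $\ee,\alpha\to 0$.
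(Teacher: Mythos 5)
Your proof is correct and follows essentially the same route as the paper: rewrite $\mathcal{F}_{\ee,\alpha}$ as a single interaction energy $\tfrac12\iint W(x-y)\,\diff\rho(x)\,\diff\rho(y)$ (your $W$ is exactly twice the paper's $W_{\ee,\alpha}$, consistent with the $\tfrac12$ in front), bound $\|D^2W\|_{L^\infty}$ from the mollifier scalings, and invoke Lemma~\ref{lem:lambda_convex_interaction}. The minor differences — that you verify symmetry of $R_\alpha$ via positivity of $\widehat R$, and that you bound by $\|D^2W\|_{L^\infty}$ while the paper uses the slightly sharper $\|(D^2W)_-\|_{L^\infty}$ — do not change the asymptotic rate $\lambda_{\ee,\alpha}$.
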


\begin{proof}
Let us remark that by symmetry properties of the kernels,
\begin{align*}
\mathcal{F}_{\ee,\alpha}[\rho]&= \frac{1}{4} \Dee [\rho \ast \widetilde\omega_\ee] - \me_2 [ \rho \ast \widetilde\omega_\ee] + \me_2 [\ee^\ast \rho \ast  R^{\f 12}_\alpha]\\
&= \int_{\T^d}\rho B_{\eps}[\rho\ast\widetilde\omega_\ee\ast \widetilde\omega_\ee ]-\int_{\T^d}\rho\, (\widetilde\omega_\ee\ast\widetilde\omega_\ee\ast \rho) + \ee^*\int_{\Td}\rho\, (R_\alpha \ast \rho)\\
&= \int_{\T^d}\rho\, ( W_{\ee,\alpha}\ast\rho)
\end{align*}

where 
$$
W_{\ee,\alpha}=\frac{\widetilde\omega_\ee\ast\widetilde\omega_\ee - \omega_\ee \ast\widetilde\omega_\ee\ast\widetilde\omega_\ee}{\ee^2} -\widetilde\omega_\ee\ast\widetilde\omega_\ee + \ee^*R_{\alpha}.
$$
Since all the kernels have bounded second derivatives, we can compute in order to recover that
\begin{align*}
    \| (D^2 W_{\ee, \alpha} )_- \|_{L^\infty} & \leq \ee^{-2} \| D^2 \widetilde\omega_\ee\ast\widetilde\omega_\ee \|_{L^\infty} + \ee^{-2} \| D^2 \omega_\ee \ast\widetilde\omega_\ee\ast\widetilde\omega_\ee \|_{L^\infty} + \| D^2 \widetilde\omega_\ee\ast\widetilde\omega_\ee \|_{L^\infty} + \ee^\ast \| D^2 R_\alpha \|_{L^\infty} \\
    & \leq \ee^{-2} \| D^2 \widetilde\omega_\ee \|_{L^\infty} (\| \widetilde\omega_\ee \|_{L^1} + \| \omega_\ee \|_{L^1} \| \widetilde\omega_\ee \|_{L^1}) +  \| D^2 \widetilde\omega_\ee \|_{L^\infty} \| \widetilde\omega_\ee \|_{L^1} + \ee^\ast \| D^2 R_\alpha \|_{L^\infty}  \\
    & \leq C (\ee^{-2} \widetilde\ee^{-d-2} + \widetilde\ee^{-d-2} + \ee^\ast \alpha^{-d-2}). 
\end{align*}
Thus, we have that $W_{\ee,\alpha}$ is $\lambda_{\ee,\alpha}$ convex with 
$$
    \lambda_{\ee,\alpha} \simeq -\left(\ee^{-2} \widetilde\ee^{-d-2} + \widetilde\ee^{-d-2} + \ee^\ast \alpha^{-d-2}\right).
$$
Applying Lemma~\ref{lem:lambda_convex_interaction} yields the result.
\end{proof}

Moreover, in view of \Cref{thm:Nonlocal to local}, we know that along a subsequence $(\ee_k, \alpha_k)$ we have convergence in the $2$-Wasserstein distance and that $\alpha_k = \alpha_k (\ee_k)$.
This information  is enough to show existence of a unique gradient flow of $\mf_{\ee_k, \alpha_k}$ for fixed $\ee_k > 0$. It can be done following the theory in \cite{Ambrosio_Gigli_Savare08} and \cite[Section 5]{Carrillo_Craig_Patacchini19}. We do not provide the details in here but we refer to \cite{Craig_Elamvazhuthi_Haberland_Turanova23} to the interest reader for similar computations and to \cite{Carrillo_Esposito_Wu23, Carrillo_Esposito_Skrzeczkowski_Wu24} for further examples. 

In our setting, we consider \eqref{eq:Nonlocal CH} as a continuity equation where the velocity is given by $\bold v_{\ee, \alpha}$. Therefore, under mild assumptions on $\omega_1$, the empirical measure $\rho_{\ee_k}^N (t) = \frac{1}{N} \sum_{i = 1}^N \delta_{X_{\ee_k}^i (t)}$ is a weak solution to \eqref{eq:Nonlocal CH} provided the particles satisfy the following equation of motion
\begin{align*}
    \dot{X}_{i}(t)  & = - \f{1}{N}\sum_{j=1}^{N}\nabla W(X_i - X_j)  + 2\frac{1}{N}\sum_{j=1}^{N} \nabla \widetilde{\omega}_{\eps_k}\ast\widetilde{\omega}_{\eps_k}(X_i-X_j) \\
    & \quad -  \ee_k^*\frac{1}{N} \sum_{j=1}^{N} \nabla R_{\alpha_k}(X_i-X_j)
\end{align*}
where $$
W = \f{\widetilde{\omega}_{\eps_k}\ast\widetilde{\omega}_{\eps_k} - \omega_{\eps_k}\ast\widetilde{\omega}_{\eps_k}\ast\widetilde{\omega}_{\eps_k}}{\eps^2_k}.
$$
Thus, as a consequence of the usual stability estimate for $\lambda$-gradient flows \cite[Theorem 11.2.1]{Ambrosio_Gigli_Savare08}, we know
\begin{equation*}
    \mathcal{W}_2 (\rho_{\ee_k,\alpha_k}^N (t) , \rho_{\ee_k , \alpha_k} (t) ) \leq e^{ \lambda_{\ee_k,\alpha_k} t} \mathcal{W}_2 (\rho_{\ee_k,\alpha_k }^N (0) , \rho_{\ee_k,\alpha_k } (0)).
\end{equation*}
We choose $N = N(k) \rightarrow +\infty$ as $k \rightarrow \infty$ at the correct speed such that
\begin{equation*}
    \lim_{k \rightarrow \infty}  \mathcal{W}_2 (\rho_{\ee_k }^{N} (0) , \rho_{\ee_k } (0))  = 0.
\end{equation*}
Therefore, by the triangular inequality, Theorem~\ref{thm:Nonlocal to local} and Lemma~\ref{lem:W_2_Kanto_Rubin} we infer Theorem~\ref{thm:Particle approximation}.

\section*{Acknowledgements}
This work was supported by the European Union via the ERC AdG 101054420 EYAWKAJKOS project. The authors are thankful to Antonio Esposito (University of L'Aquila), José Antonio Carrillo (University of Oxford) and Filippo Santambrogio (Université Claude Bernard Lyon 1) for useful conversations on the project.

\appendix
\section{Numerical simulations}\label{sec:Simulations}

We provide some numerical simulations of the model at the particles level to give a brief idea of its evolution. We show in comparison the associated equation at the local level. Concerning the particle simulations we use the Sisyphe package that we modify for our purpose~\cite{Diez2021}. The SiSyPHE library simulates efficiently interacting particle systems, both on the GPU and on the CPU. It uses PyTorch and the KeOps library.

The equation of motion for the particles are the following: for $i=1,\ldots, N$
$$
\dot{X}_{i}(t)  = - \f{1}{N}\sum_{j=1}^{N}\nabla W_\ee(X_i - X_j) + \frac{m}{m-1}\sum_{j=1}^{N} \nabla \widetilde{\omega}_{\eps}(X_i-X_j) \left(\f{1}{N}\sum_{k=1}^{N}\widetilde{\omega}_{\eps}(X_j - X_k)\right)^{m-1}
$$
where 
$$
    W_\ee = \f{\widetilde{\omega}_{\eps}\ast\widetilde{\omega}_{\eps} - \omega_\eps\ast\widetilde{\omega}_{\eps}\ast\widetilde{\omega}_{\eps}}{\eps^2}.
$$
For the simulations in \Cref{fig:Numerics}, the mollifiers $\omega_\eps$ and $\widetilde{\omega}_{\eps}$ are  Gaussian kernels with variance  $\eps$ and $\widetilde\eps$ respectively.
Note that in the numerical simulations we neglect the artificial viscosity term $\eps^\ast\dive(\rho\nabla R_{\alpha}\ast\rho)$ which it is only useful for the analysis and is small compared to the other terms. The particles are uniformly distributed in a grid and can overlap during the evolution. 

We compare the numerical simulations at the particle level with the corresponding local PDE given by
$$
\p_t \rho  + \dive(\rho\nabla\Delta\rho)  + \Delta \rho ^m=0.
$$
This equation is stabilized using the SAV (\textit{scalar auxiliary variable}) method~\cite{Jie-2018-SAV}. More precisely, a variant designed for degenerate parabolic models that preserves the physical bounds of the solution~\cite{Fukeng-2021-bounds,huang-SAV-fourth}. The SAV method allows to solve efficiently (and also linearly) the equation while preserving the dissipation of a modified energy. A detailed scheme can be found in the article of the first author~\cite{Elbar_Poulain24} where a compressible Cahn-Hilliard-Navier-Stokes model is considered. Here, we only retain the scheme of the Cahn-Hilliard part.


\begin{figure}[H]
    \centering
    \begin{minipage}{0.415\textwidth}
        \centering
        \includegraphics[width=\linewidth, height=5.45cm]{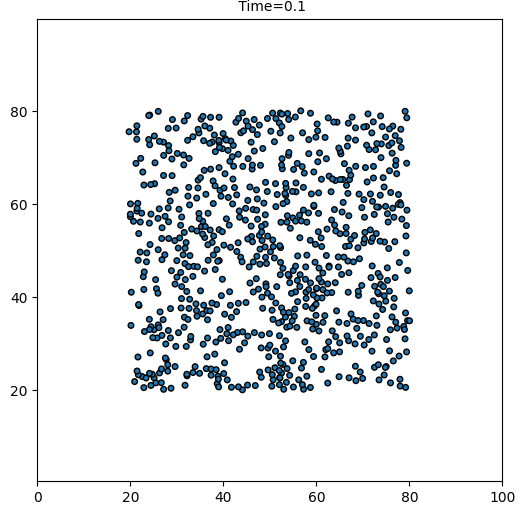}
        \caption*{Particles -- Initial datum.}
        \label{fig:3}
    \end{minipage}
    \hfill
    \begin{minipage}{0.415\textwidth}
        \centering
        \includegraphics[width=\linewidth, height=5.5cm]{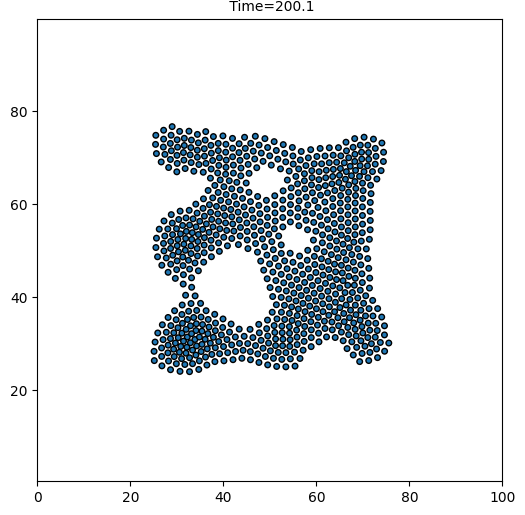}
        \caption*{Particles  -- Evolution.}
        \label{fig:4}
        
    \end{minipage}
    \hspace{0.9cm}
    \vskip\baselineskip
    \label{fig:quatre}
    \begin{minipage}{0.465\textwidth}
        \centering
        \includegraphics[width=\linewidth, height=5cm]{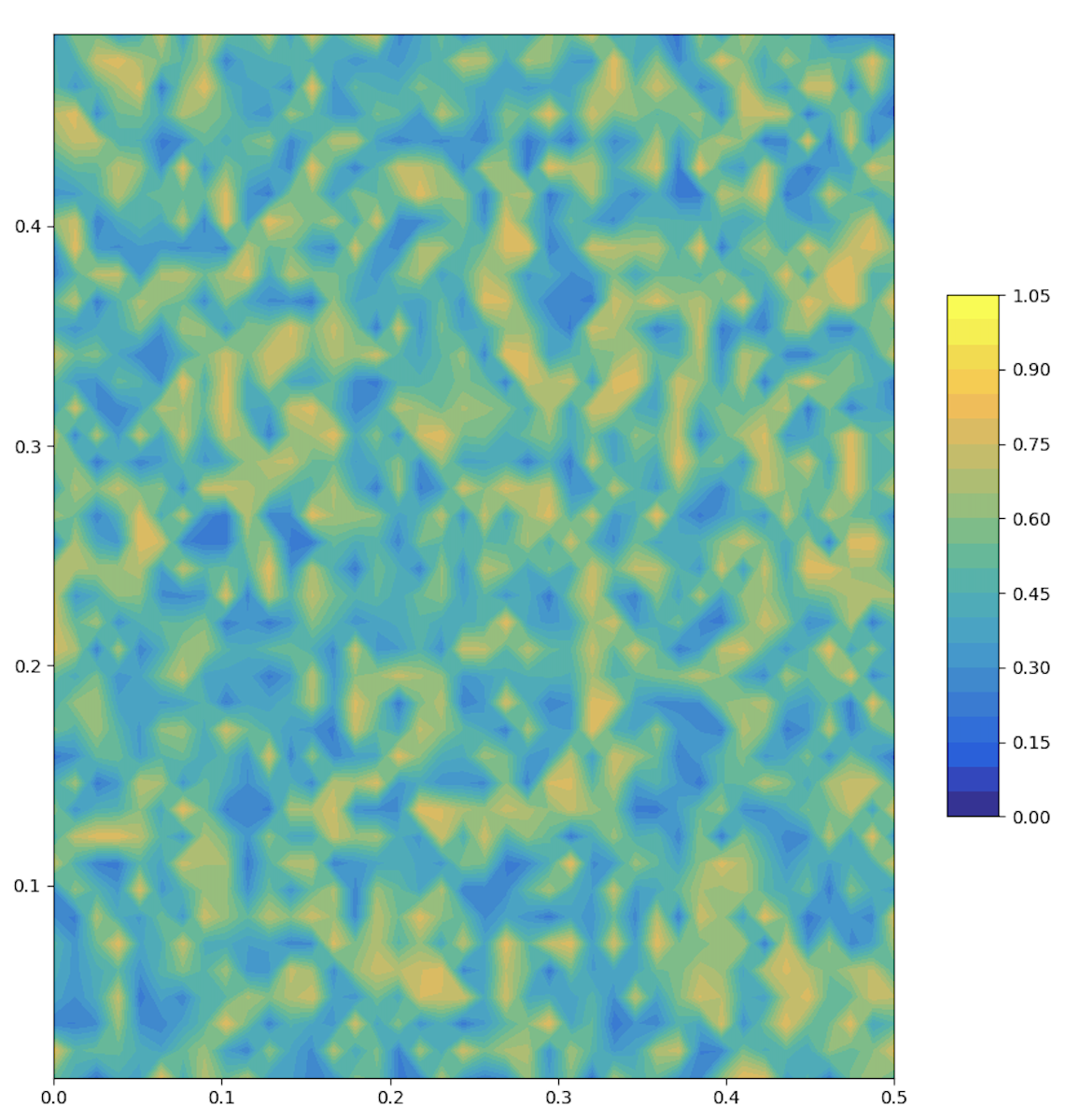}
        \caption*{Local PDE  -- Initial datum.}
        \label{fig:1}
    \end{minipage}
    \hfill
    \begin{minipage}{0.465\textwidth}
        \centering
        \includegraphics[width=\linewidth, height=5.2cm]{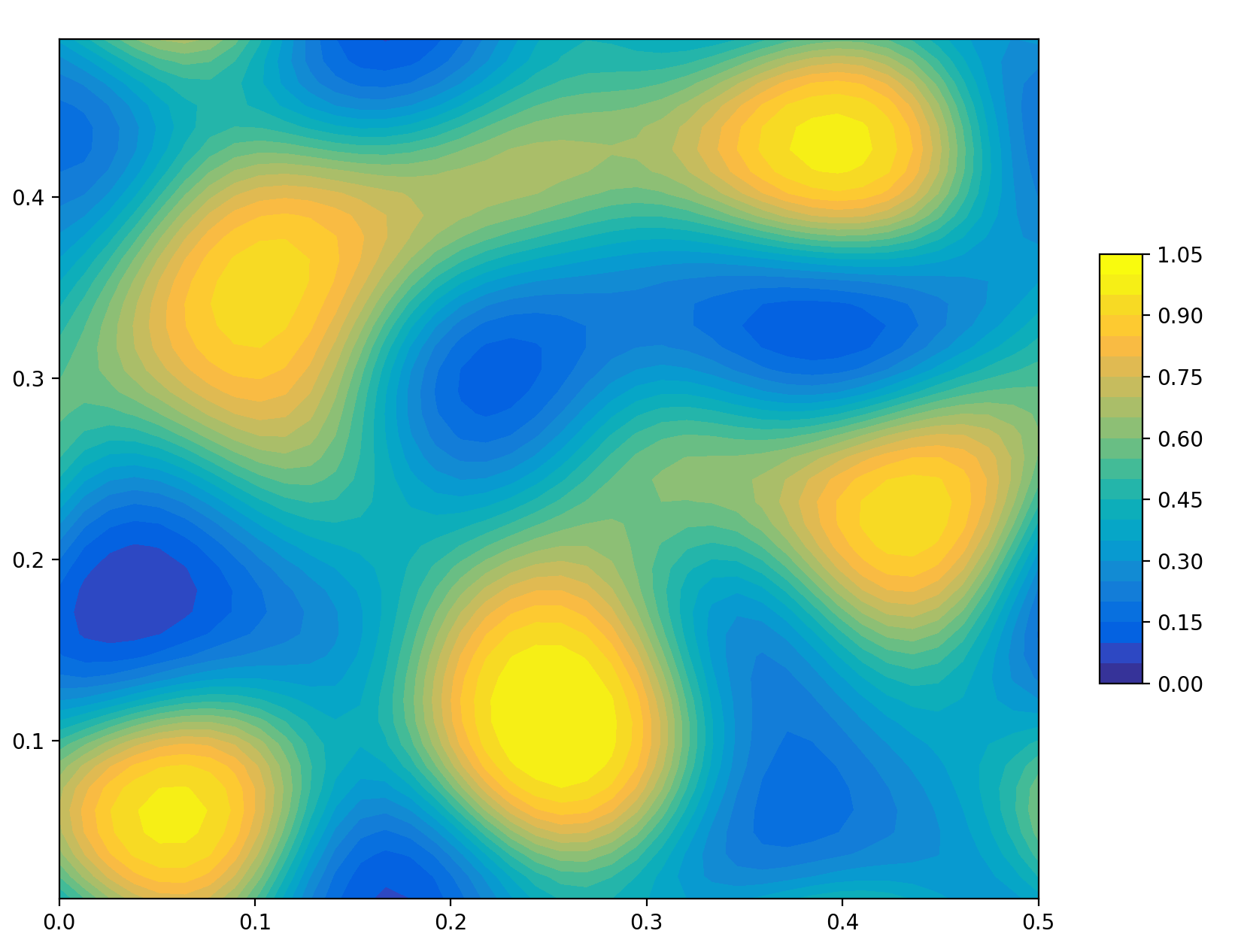}
        \caption*{Local PDE -- Evolution.}
        \label{fig:2}
    \end{minipage}
 \caption{Comparison of the evolution of the particle method for $\ee = 0.1$ and $N=750$ (top) with the evolution of the local PDE (bottom), leading to cluster formation in both. Case $m=2$. Let us note that for the particle method  we neglect the term $\ee^\ast \dive(\rho\nabla R_{\alpha}\ast\rho)$. The initial conditions are random for both and are not the same.}\label{fig:Numerics}
\end{figure}

\bibliography{references}
\bibliographystyle{abbrv}
\end{document}